\newtheorem{teo}{Theorem} [section]
\newtheorem{defin}{Definition} [section]
\newtheorem{remark}{Remark}[section]
\newtheorem{prop}{Proposition}[section]
\newtheorem{cor}{Corollary}[section]
\newtheorem{lemma}{Lemma}[section]
\newcounter{yuppo}
\newtheorem{yuppi}{Theorem}[yuppo]
\newcommand{\Keler} {K\"{a}hler }
\newcommand{\End}{\operatorname{End}}
\newcommand{\cds}{\cdots}
\newcommand{\cd}{\cdot}
\renewcommand{\setminus}{-}
\newcommand{\om}{\omega}
\renewcommand{\phi}{\varphi}
\newcommand{\cinf}{C^\infty}
\newcommand{\ra}{\rightarrow}
\newcommand{\C}{\mathbb{C}}
\newcommand{\R}{\mathbb{R}}
\newcommand{\SU} {\operatorname{SU}}
\newcommand{\Sl}{\operatorname{SL}}
\newcommand{\Gl}{\operatorname{Gl}}
\newcommand{\restr}[1]          {\vert_{#1}}
\newcommand{\Ad}{\operatorname{Ad}}
\newcommand{\ad}{{\operatorname{ad}}}
\newcommand{\ga}{\gamma}
\newcommand{\meno}{^{-1}}
\newcommand{\PP}{\mathbb{P}}
\newcommand{\enf}{\emph}
\newcommand{\desudt}[1] []      {\dfrac {\mathrm {d} #1 }{\mathrm {dt}}}
\newcommand{\desudtzero}        {\desudt \bigg \vert _{t=0} }
\renewcommand{\root}{\Delta}
\newcommand{\conv} {\operatorname{conv}}
\newcommand{\liu}{\mathfrak{u}}
\newcommand{\lia}{\mathfrak{a}}
\newcommand{\liek}{\mathfrak{k}}
\newcommand{\lier}{\mathfrak{r}}
\newcommand{\lieg}{\mathfrak{g}}
\newcommand{\liep}{\mathfrak{p}}
\newcommand{\lieq}{\mathfrak{q}}
\newcommand{\liez}{\mathfrak{z}}
\newcommand{\lies}{\mathfrak{s}}
\newcommand{\liem}{\mathfrak{m}}
\newcommand{\lien}{\mathfrak{n}}
\newcommand{\lipo} {\liep_1}
\newcommand{\liko} {\liek_1}
\newcommand{\lipt} {\liep_0}
\newcommand{\likt} {\liek_0}
\newcommand{\im}{\operatorname{Im}}
\newcommand{\la}{\lambda}
\newcommand{\alfa}{\alpha}
 \newcommand{\vacuo}{\emptyset}
\newcommand{\OO}{\mathcal{O}} % coadjoint orbit
\renewcommand{\c}{{\widehat{\OO}}} % convex envelope
\newcommand{\polp}{{P}} % momentum polytope
\newcommand{\ml}{\operatorname{Max}} % set of maximum point of a component of the momentum map
\newcommand{\ext}{\operatorname{ext}} % set of extreme points of a convex set
\newcommand{\sx}{\langle} % scalar product
\newcommand{\xs}{\rangle}
\newcommand{\relint}{\operatorname{relint}} % relative interior of a convex body
\newcommand{\Weyl}{W} % Weyl group
\newcommand{\cchamber}{\overline{C}}
\newcommand{\chamber}{{C}}
\newcommand{\Crit}{\operatorname{Crit}} % set of critical points
\newcommand{\roots}{\Delta} % set of roots
\newcommand{\simple}{\Pi} % set of simple roots
\newcommand{\faces}{\mathscr{F}(\c)} % faces of \OO
\newcommand{\facesp}{\mathscr{F}(\polp)} % faces of P that contain \x
\newcommand{\spam}{\operatorname{span}}
\newcommand{\HF}{{H_F}}
\newcommand{\CF}{C_F}
\newcommand{\liea}{\mathfrak{a}}
\newcommand{\CFI}{C_F^{\HF}}
\newcommand{\noparty}[1]{}%{\colorbox{Apricot}{#1}}
\newcommand{\changed}[1]{{#1}}
\newcommand{\scalo}{\sx \, , \, \xs}
\newcommand{\metrica}{(\, , \, )}
\newcommand{\mup}{\mu_\liep}
\newcommand{\mupb}{\mu_\liep^\beta}
\begin{document}

\author{Leonardo Biliotti, Alessandro Ghigi and Peter Heinzner}

\title{Polar orbitopes}

   \address{Universit\`{a} di Parma} \email{leonardo.biliotti@unipr.it}
   \address{Universit\`a di Milano Bicocca}
   \email{alessandro.ghigi@unimib.it}

   \address{Ruhr Universit\"at Bochum} \email{peter.heinzner@rub.de}

  \thanks{The first author was partially supported by GNSAGA of INdAM.
    The second author was partially supported by GNSAGA of INdAM and
    by PRIN 2009 MIUR ''Moduli, strutture geometriche e loro
    applicazioni''. The third author was partially supported by
    DFG-priority program SPP 1388 (Darstellungstheorie)}

   \subjclass[2000]{22E46; % Semisimple Lie groups and their representations
     53D20} %Momentum maps; symplectic reduction

\maketitle

\begin{abstract}
  We study \emph{polar orbitopes}, i.e. convex hulls of orbits of a
  polar representation of a compact Lie group. They are given by
  representations of $K$ on $\liep$, where $K$ is a maximal compact
  subgroup of a real semisimple Lie group $G$ with Lie algebra $\lieg
  = \liek \oplus \liep$. The face structure is studied by means of the
  gradient momentum map and it is shown that every face is exposed and
  is again a polar orbitope. Up to conjugation the faces are
  completely determined by the momentum polytope. There is a tight
  relation with parabolic subgroups: the set of extreme points of a
  face is the closed orbit of a parabolic subgroup of $G$ and for any
  parabolic subgroup the closed orbit is of this form.
\end{abstract}

\tableofcontents{}

\section{Introduction}

If $K$ is a compact group and $K\ra \Gl(V)$ is a real representation,
the convex hull of a $K$-orbit is called an \emph{orbitope}
\cite{sanyal-sottile-sturmfels-orbitopes}.  If $V$ is provided with a
$K$-invariant scalar product, the representation is said to be
\emph{polar} if there is a linear subspace $S \subset V$ that
intersects perpendicularly all the orbits of $K$.  An important class
of examples is given by the adjoint representations of compact Lie
groups.  In \cite{biliotti-ghigi-heinzner-1-preprint} we studied the
orbitopes of these actions.  They are equivariantly isomorphic to
Satake-Furstenberg compactifications of symmetric spaces of type
$K^\C/K$. One homeomorphism has been described in algebraic terms in
\cite{koranyi-remarks}. Another homeomorphism has been constructed in
\cite{biliotti-ghigi-2} (in the case of an integral orbit) using
integration of the momentum map on a flag manifold.  This geometric
construction was developed by Bourguignon, Li and Yau in the case of
$\PP^n$.

In the present paper we study the orbitopes of a polar representation
of a compact group.  Let $G$ be a real connected semisimple Lie group
and let $\lieg = \liek \oplus \liep$ be a Cartan decomposition of its
Lie algebra.  Let $K$ be a the maximal compact subgroup with Lie
algebra $\liek$. Then the adjoint action of $K$ preserves $\liep$ and
its restriction to $\liep$ is a polar representation.  By a theorem of
Dadok \cite[Prop. 6]{dadok-polar} if $V$ is any polar representation
of a group $K_1$, there is a semisimple Lie group $G$ such that $V$
can be identified with $ \liep$ so that the orbits of $K_1$ coincide
with the orbits of $\Ad K$ on $\liep$.  Therefore to understand the
orbitopes of polar representations it is sufficient to study the
$K$-orbitopes on $\liep$.

The study of these orbitopes is also needed in order to generalize the results in
% One motivation for this study is to provide information on orbitopes
% in order to prove the isomorphism with Satake-Furstenberg
% compactifications as in
\cite{biliotti-ghigi-2} to general symmetric spaces and this is one of
the motivations for our work.

Our set up is the following.
% More generally we use the following framework developed in
% \cite{heinzner-schwarz-Cartan,heinzner-schuetzdeller,
% heinzner-schwarz-stoetzel, heinzner-stoetzel}.
Let $U$ be compact Lie group and let $U^\C$ be its complexification.
A closed subgroup $G\subset U^\C$ is called \enf{compatible} if
$G=K\cd \exp\liep$ where $K:=G\cap U$ and $\liep:= \lieg \cap i\liu$.
It follows that $K$ is a maximal compact subgroup of $G$ and that
$\lieg = \liek \oplus \liep$.  $K$ acts on $\lieg$ by the adjoin
action and $\liep$ is invariant.  Therefore we get an action of $K$ on
$\liep$.  The objects that we wish to study are the orbits of this
action and their convex hulls. %, which we call $G$-\emph{orbitopes}.
It is easy to see that one can reduce to the case in which $U$ and $G$
are semisimple (see \S \ref{faces-2}).  If $\OO \subset \liep$ is a
$K$-orbit, we denote by $\c$ its convex hull.  We will assume
throughout the paper that $G$ is connected.  It is a fundamental fact
that the action of $K$ on $\OO$ extends to an action of $G$, see e.g.
\cite[Prop. 6]{heinzner-stoetzel}.  If $\lia\subset \liep$ is a
maximal subalgebra, then by Kostant convexity theorem
\cite{kostant-convexity}, the orthogonal projection of $\OO$ onto
$\lia$ is a convex polytope $P$ given by the convex hull of a Weyl
group orbit.  In particular the Weyl group acts on the set $\facesp$
of faces of $P$ and similarly $K$ acts on the set $\faces$ of faces of
$\c$.

Our main result is the following.
\begin{teo}
  \label{main} Let $P \subset \lia$ be the momentum polytope
  associated to $\OO$. If $\sigma $ is a face of $P$ and
  $K^{\sigma^\perp}$ is the centralizer of the normal space
  $\sigma^\perp\subset \lia$, then $K^{\sigma^\perp}\cd \sigma$ is a
  face of $\c$. Moreover the map $\sigma \mapsto K^{\sigma^\perp}\cd
  \sigma$ induces a bijection between $\facesp/W$ and $\faces/K$.
\end{teo}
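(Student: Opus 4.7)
The plan is to realize each face of $\c$ as an exposed face
\[
F_\beta := \Bigl\{x \in \c : \sx x, \beta\xs = \max_{y \in \c} \sx y, \beta\xs\Bigr\}, \qquad \beta \in \liep,
\]
and then identify $F_\beta$ with the claimed $K^{\sigma^\perp}\cd\sigma$. Since $k\cd F_\beta = F_{\Ad(k)\beta}$ and each $\beta\in\liep$ is $\Ad(K)$-conjugate into $\cchamber\subset\lia$, it suffices to consider $\beta \in \cchamber$. That every face is in fact exposed should follow from the gradient-flow techniques developed in the preceding sections: the extreme points of $\c$ lie in $\OO$, and by $\Ad$-invariance of the Killing form $df_\beta|_x([\xi,x]) = \sx \xi, [x,\beta]\xs$ for $\xi\in\liek$, where $f_\beta(x):=\sx x,\beta\xs$, so the critical set of $f_\beta|_\OO$ is $\OO\cap\liez_\liep(\beta)$.

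Fix $\beta\in\cchamber$ in the relative interior of the inward normal cone of a face $\sigma$ of $P$. Then $K^\beta=K^{\sigma^\perp}$ and the extreme points of $F_\beta$ lie in $\OO\cap\liez_\liep(\beta)$. A standard Coxeter-theoretic argument shows the vertex set of $\sigma$ is a single orbit of the stabilizer $W^\sigma$, and Kostant's theorem applied to the sub-pair $(K^{\sigma^\perp}, \liez_\liep(\sigma^\perp))$---whose restricted Weyl group is $W^\sigma$---then yields $\OO\cap\liez_\liep(\beta) = K^{\sigma^\perp}\cd v$ for any vertex $v$ of $\sigma$. Hence $F_\beta = \conv(K^{\sigma^\perp}\cd v)$ is itself a polar orbitope, this time for the smaller pair $(K^{\sigma^\perp}, \liez_\liep(\sigma^\perp))$, with momentum polytope $\sigma$.

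The identification $F_\beta = K^{\sigma^\perp}\cd\sigma$ then reduces to a basic lemma which I expect to be the technical crux: for any polar orbitope, $\c = K\cd P$. This follows from the two observations that $\c\cap\lia = P$ (the orthogonal projection of $\c$ onto $\lia$ equals $P$ by Kostant, and a point of $\c\cap\lia$ is its own projection) and that every $K$-orbit in $\liep$ meets $\lia$; together these imply any $y\in\c$ is $K$-conjugate to a point of $P$. Applied inside the smaller pair this gives $F_\beta = K^{\sigma^\perp}\cd\sigma$. The main obstacle is checking that $(K^{\sigma^\perp}, \liez_\liep(\sigma^\perp))$ really is a compatible sub-pair in the sense of the paper, so that Kostant convexity and the orbit-meeting-$\lia$ property remain available; this reduces to standard Lie-theoretic bookkeeping.

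Surjectivity of $[\sigma]\mapsto[K^{\sigma^\perp}\cd\sigma]$ is the exposed-face classification. For injectivity, suppose $k(K^{\sigma_1^\perp}\cd\sigma_1) = K^{\sigma_2^\perp}\cd\sigma_2$; intersect both sides with $\lia$ and use $(K^{\sigma^\perp}\cd\sigma)\cap\lia = \sigma$ on the right, together with the fact that $K$-orbits meet $\lia$ in $W$-orbits on the left, to conclude $\sigma_2 = w\sigma_1$ for some $w\in W$.
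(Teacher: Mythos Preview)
Your overall strategy is close to the paper's, but there are two genuine gaps.

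First, the equation $\OO\cap\liez_\liep(\beta) = K^{\sigma^\perp}\cd v$ is false. The left side is the full critical set of $f_\beta|_\OO$, which meets $\lia$ in all of $\ext P = W\cd x$ (since $\lia\subset\liez_\liep(\beta)$) and is therefore a union of several $K^\beta$-orbits, one for each critical value. What you actually need is that the \emph{maximum} set $\ext F_\beta$ is a single $K^\beta$-orbit. This can be salvaged along your lines: $\ext F_\beta$ is $K^{\sigma^\perp}$-invariant, lies in $\liez_\liep(\beta)$, and meets $\lia$ exactly in $\ext\sigma$; so once you know that $\ext\sigma$ is a single $W^{\sigma^\perp}$-orbit (your ``Coxeter-theoretic argument''), the orbit parametrization for the sub-pair yields the conclusion. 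But that Coxeter step is precisely the combinatorial core of the paper's result that $\ml(\beta)$ is a single $K^\beta$-orbit, proved there via a Hessian computation together with a Weyl-chamber lemma; it is not a triviality you can invoke.

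Second, the reduction to exposed faces is not justified. Gradient-flow techniques alone do not show that every face of $\c$ is exposed; the paper obtains exposedness only \emph{after} establishing, by induction on maximal chains of faces, that an arbitrary face $F$ is already an orbitope of some centralizer $(G^\lies)^0$ with $\lies\subset\liep$ abelian. With $\lies\subset\lia$ one then gets that $\sigma=F\cap\lia$ is a face of $P$, hence exposed by some $\beta\in\lia$, and $F=F_\beta(\c)$ follows by comparing intersections with $\lia$. Your sketch skips this induction, so you have no handle on non-exposed faces a priori. Finally, your injectivity argument is too quick: from $k\cd F_1=F_2$ one cannot simply ``intersect with $\lia$'', since $(k\cd F_1)\cap\lia$ need not be a $W$-translate of $\sigma_1$. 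The paper handles this by showing that if $\lies_{F_1},\lies_{F_2}\subset\lia$ and $F_1,F_2$ are $K$-conjugate, then they are already $N_K(\lia)$-conjugate, using conjugacy of maximal abelian subalgebras inside $\liep^\beta$.
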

The correspondence between $\faces/K$ and $\facesp/W$ holds for a
general polar representation, see Remark \ref{polare} at
p. \pageref{polare}.  Applied to the case $G=U^\C$ this
\changed{theorem} gives the results proven in
\cite{biliotti-ghigi-heinzner-1-preprint}.  \changed{The setting of
  the present paper is more general than the one considered
  there. The pairs $(G,K)$ with $G$ compatible contain all Riemannian
  symmetric pairs of noncompact type, while the pairs $(U^\C, U)$
  correspond to symmetric pairs of type IV \cite[p. 516]{helgason}.}
The particular cases $U=\SU(n)$, $G=\Sl(n, \R)$ and $
U=\operatorname{SO}(n)$, $G=\operatorname{SO}(n, \C)$ have been
considered in \cite{sanyal-sottile-sturmfels-orbitopes}.
\changed{The case where $\OO$ can be realized as the Shilov boundary of a Hermitian symmetric domain has been studied in \cite[Prop. 2.1]{clerc-neeb}.}
%
%  in this paper and not in the previous one.}

% \changed{
% Another important special case is given by the pairs $(G,K)$
%   such that $G/K$ is a Hermitian symmetric space of the noncompact
%   type.  In this case there is a special $K$-orbit $\OO$ such that the
%   interior of $\c$ is a bounded symmetric domain and $\OO$ is its
%   Shilov boundary. In this case many results are known.  All the faces
%   of $\c$ are holomorphic arc-components and this yields very refined
%   structure results on the boundary of $\c$, as in
%   \cite[Prop. 2.1]{clerc-neeb}. For example all the faces are exposed
%   and they are closure of bounded symmetric domains of lower rank. }

We outline the main steps of the proof.

Among the faces of a convex set are the exposed faces (see \S
\ref{pre-convex}). In the case of $\c$ the study of these faces is
equivalent to the understanding of the height functions on $\OO$ (\S
\ref{faces1}). This is a classical subject, going back to the paper
\cite{duistermaat-kolk-varadarajan} by Duistermaat, Kolk and
Varadarajan and to Heckman's thesis \cite{ heckman-thesis}.  The
results are very efficiently described in the language of the
\emph{gradient momentum map} (which is recalled in \S
\ref{subsection-gradient-moment}).  The set of extreme points $\ext F$
of an exposed face $F$ is connected and is an orbit of a centralizer
$K^\beta \subset K$, where $\beta $ is an element of $\liep$
(Proposition \ref {massimo-connesso}).  In general the group $K^\beta$
is not connected.  An inductive argument shows that any face $F\subset
\c$ (not necessarily exposed) is an orbitope of the centralizer
$K^\lies$ of some subalgebra $\lies \subset \liep$ (Proposition
\ref{facciona-orbita}). If $\lia \subset \liep$ is a maximal
subalgebra containing $\lies$, we show that $F\cap \lia$ is a face of
the momentum polytope and that $F\cap \lia$ determines $F$ (Proposition
\ref{proiezione-intersezione}). Here we use in an essential way the
Kostant convexity theorem.
% The map sending $F$ to $F\cap \lia$ will be the main geometric
% ingredient in establishing the correspondence given in Theorem
% \ref{main} and the present result corresponds to injectivity.  This
% is enough to

An important conclusion is that all faces of $\c$ are exposed (Theorem
\ref{tutte-esposte}). This answers Question 1 of
\cite{sanyal-sottile-sturmfels-orbitopes} for polar orbitopes.
% This fact corresponds to surjectivity in Theorem \ref{main} and
% allows
Next \changed{recall that the $K$-action on $\OO$ extends to an action of the group $G$ (see \S \ref{coadjoint-orbits} below).}  We analyze the influence of the $G$-action on the geometry of the
extreme points of the faces (\S \ref{parabolic-section}). It turns out
that there is a strong link between the parabolic subgroups of $G$ and
the faces of $\c$. In \ref{parabolic-section} we show the following.
\begin{teo}\label{main-2}
  The set $\{\ext F: F $ a nonempty face of $\c \}$ coincides with the
  set of all closed orbits of parabolic subgroups of $G$.
\end{teo}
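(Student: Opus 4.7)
The plan is to parametrize both sides of the claimed equality by vectors $\beta\in\liep$, and then match the parametrizations. To each $\beta\in\liep$ I associate, on the one hand, the parabolic subgroup
\[
P_\beta=\bigl\{\,g\in G:\lim_{t\to+\infty}\exp(-t\beta)\,g\,\exp(t\beta)\text{ exists in }G\,\bigr\},
\]
with Levi factor $G^\beta$ (whose maximal compact is $K^\beta$) and unipotent radical $N_\beta$ having Lie algebra $\bigoplus_{\lambda>0}\lieg_\lambda(\ad\beta)$; and, on the other hand, the exposed face $F_\beta\subset\c$ cut out by the height functional $h_\beta(x)=\langle x,\beta\rangle$. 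Every parabolic of $G$ is $G$-conjugate to some $P_\beta$ with $\beta\in\cchamber\subset\lia$, and by Theorem~\ref{tutte-esposte} every face of $\c$ is an $F_\beta$. Thus both sides of Theorem~\ref{main-2} are naturally indexed by $\liep$.

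For the forward inclusion, take a face $F=F_\beta$; by Proposition~\ref{massimo-connesso} there is $\x\in\OO$ maximizing $h_\beta$ with $\ext F=K^\beta\cdot\x$. Differentiating $h_\beta$ in directions $[X,\x]$, $X\in\liek$, and using $\Ad$-invariance, one gets $[\beta,\x]=0$, so $\x\in\liep^\beta$. Compatibility of $G^\beta=K^\beta\exp(\liep^\beta)$ together with a Kempf--Ness-type argument applied to its action on the component of $\OO\cap\liep^\beta$ through $\x$ yields $G^\beta\cdot\x=K^\beta\cdot\x$. The key further observation is that the gradient flow of $h_\beta$ on $\OO$ is generated by the one-parameter subgroup $\exp(\R\beta)\subset P_\beta$; hence any compact $P_\beta$-invariant subset of $\OO$ is forced into the critical set $\OO\cap\liep^\beta$, so in particular $N_\beta\cdot\x=\{\x\}$ and $P_\beta\cdot\x=G^\beta\cdot\x=K^\beta\cdot\x=\ext F$. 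This set is closed in $\OO$ since $K^\beta$ is compact.

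Conversely, given a parabolic $Q\subset G$, choose $g\in G$ with $gQg\meno=P_\beta$ for some $\beta\in\cchamber$; if $C$ is a closed $Q$-orbit, then $gC$ is closed and $P_\beta$-invariant, hence contained in $\OO\cap\liep^\beta$ by the gradient-flow argument above, and the reduced $G^\beta$-action there collapses it to $K^\beta\cdot y$ for some $y\in\OO\cap\liep^\beta$. Perturbing $\beta$ to $\beta'=\beta+t\gamma$, with $\gamma$ chosen in the relative interior of the face of the momentum polytope containing the $\lia$-projection of $y$ and $t>0$ small enough, I arrange $K^{\beta'}=K^\beta$ and $y=\arg\max_\OO h_{\beta'}$; then Proposition~\ref{massimo-connesso} identifies $K^\beta\cdot y$ with $\ext F_{\beta'}$. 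Translating by $g\meno$ and using that $K$ permutes the extreme-point sets of faces completes the identification of $C$ with some $\ext F$.

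The principal obstacle is the step showing that every closed $P_\beta$-orbit lies in the critical set $\OO\cap\liep^\beta$. This is the real analogue of the Bia\l{}ynicki--Birula attracting-cell phenomenon, and it hinges on the fact, to be extracted from the description of the $G$-action on $\OO$ in \S\ref{coadjoint-orbits} and the gradient momentum map of \S\ref{subsection-gradient-moment}, that the $\OO$-gradient of $h_\beta$ coincides with the infinitesimal generator of $\exp(t\beta)$. Once this is established, monotonicity of $h_\beta$ along the flow forces any compact invariant subset to lie in the critical locus, and the remainder of the argument is bookkeeping combining Proposition~\ref{massimo-connesso}, the compatibility of $G^\beta$, and the perturbation trick in the reverse direction.
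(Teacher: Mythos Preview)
The forward inclusion is essentially sound, though one claim is overstated: it is not true that every compact $P_\beta$-invariant subset of $\OO$ lies in the critical set (take $\OO$ itself). What you actually need, and what does follow, is that $N_\beta$ fixes a global maximum point $x_0$ (because $n\cdot x_0$ lies in the unstable manifold of $x_0$, which is trivial at a global maximum) and that $G^\beta$ preserves $\ml(\beta)$ (because it commutes with the flow of $\beta$). This already gives $G^\beta\cdot x_0\subset\ml(\beta)=K^\beta\cdot x_0$; the Kempf--Ness invocation is unnecessary.

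The reverse inclusion has a genuine gap in the perturbation step. Once you know a closed $P_\beta$-orbit equals $K^\beta\cdot y$ with $y$ \emph{critical} for $h_\beta$, you have not shown $y$ is a \emph{maximum}, and the perturbation $\beta'=\beta+t\gamma$ cannot manufacture this. For $K^{\beta'}=K^\beta$ one needs $\gamma$ to lie in $\lia_I$ (the common kernel of the roots vanishing on $\beta$), which your choice of $\gamma$ in a face of $P$ through $\pi(y)$ has no reason to satisfy. Even when it does, for small $t>0$ the vector $\beta'$ remains in the same closed Weyl chamber as $\beta$, so by Proposition~\ref{hesso-2} the maximum set $\ml(\beta')$ still meets $\lia$ in that chamber; a critical point $y\in\lia$ outside that chamber will not enter $\ml(\beta')$ under any small perturbation.

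The paper closes this gap by a direct Hessian argument rather than perturbation. Taking $y$ to be the maximum of $h_\beta$ on the closed orbit $\OO'$, Proposition~\ref{tangent} gives $d\alpha_e(\lieg^{\beta+})=V_0\oplus V_+$, hence $V_+\subset T_y\OO'$. Since $y$ maximizes $h_\beta$ on $\OO'$, the Hessian is nonpositive on $T_y\OO'$, forcing $V_+=\{0\}$. Thus $y$ is a local maximum of $h_\beta$ on all of $\OO$, hence global by the proof of Proposition~\ref{massimo-connesso}, and $\OO'=K^\beta\cdot y=\ml(\beta)=\ext F_\beta(\c)$ with no perturbation needed. (A smaller point: your final translation by $g^{-1}$ with $g\in G$ is problematic because $G$ does not act on $\c$; but since any parabolic is $K$-conjugate to some $P_\beta$, one may take $g\in K$.)
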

Using these results we finally set up the correspondence between the
faces of $\c$ and the faces of $P$ and prove Theorem \ref{main} (\S
\ref {polysection}).

% \changed{Roughly speaking the map $ \phi:
%   \faces/K \ra \facesp/W$ is constructed as follows: given $F \in
%   \faces$ there is $k\in K$ such that $kF$ is the exposed face of $\c$
%   corresponding to a vector $\beta \in \lia$. Then $\phi([F]):=[F\cap
%   \lia]$.  In the opposite direction, given $\sigma \in \facesp$, one
%   has to choose appropriately a vector $\beta\in \lia$ such that
%   $\sigma$ is the exposed face of $P$ corresponding to $\beta$.  If
%   $F$ is the exposed face of $\c$ corresponding to $\beta$ then
%   $\phi\meno [\sigma] = [F]$.  In fact, if $K^{\sigma^\perp}$ is the
%   centralizer of the normal space $\sigma^\perp\subset \lia$, then
%   $F=K^{\sigma^\perp}\cd \sigma$.  }

In the final section we briefly explain how the
boundary of $\c$ is stratified by face type and how the Satake
combinatorics can be used to describe the faces of the orbitope in
terms of root data.

{\bfseries \noindent{Acknowledgements.}}  The first two authors are
grateful to the Fakultät für Mathematik of Ruhr-Universität Bochum for
the wonderful hospitality. \changed{We also would like to thank the
  referees for helpful comments.}

 \section{Preliminaries}

 \subsection{Convex geometry}

 \label{pre-convex}

 It is useful to recall a few definitions and results regarding convex
 sets (see e.g. \cite{schneider-convex-bodies} and \cite[\S
 1]{biliotti-ghigi-heinzner-1-preprint}).  Let $V$ be a real vector
 space with a scalar product $\scalo$ and let $E\subset V$ be a
 \changed{compact} convex subset.  The \emph{relative interior} of
 $E$, denoted $\relint E$, is the interior of $E$ in its affine hull.
 A face $F$ of $E$ is a convex subset $F\subset E$ with the following
 property: if $x,y\in E$ and $\relint[x,y]\cap F\neq \vacuo$, then
 $[x,y]\subset F$.  The \emph{extreme points} of $E$ are the points
 $x\in E$ such that $\{x\}$ is a face. Since  $E$ is compact the faces
 are closed \cite[p. 62]{schneider-convex-bodies}.  A face distinct
 from $E$ and $\vacuo$ will be called a \enf{proper face}.  The
 \enf{support function} of $E$ is the function $ h_E : V \ra \R$, $
 h_E(u) = \max_{x \in E} \sx x, u \xs$.  If $ u \neq 0$, the
 hyperplane $H(E, u) : = \{ x\in E : \sx x, u \xs = h_E(u)\}$ is
 called the \enf{supporting hyperplane} of $E$ for $u$. The set
   \begin{gather}
     \label{def-exposed}
     F_u (E) : = E \cap H(E,u)
   \end{gather}
   is a face and it is called the \enf{exposed face} of $E$ defined by
   $u$. % or also the \emph{support set} of $E$ for $u$.
 %\end{defin}
 %In using the notation $F_u(E)$ we will tacitly assume that the affine
 %span of $E$ is $V$. Hence by definition an exposed face is proper.
 In general not all faces of a convex subsets are exposed.
 A simple example is given by the convex hull of a closed disc and a
 point outside the disc: the resulting convex set is the union of the
 disc and a triangle. The two vertices of the triangle that lie on the
 boundary of the disc are non-exposed 0-faces.

 \begin{lemma}
[\protect{\cite[Lemma 3]{biliotti-ghigi-heinzner-1-preprint}}]
\label{ext-facce}
If $F$ is a face of a convex set $E$, then $\ext F = F \cap \ext E$.
 \end{lemma}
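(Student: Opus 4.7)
The plan is to prove the two inclusions $\ext F \subseteq F \cap \ext E$ and $F \cap \ext E \subseteq \ext F$ separately, using only the definition of face as stated in \S \ref{pre-convex}, namely that a convex subset $F \subseteq E$ is a face if every closed segment $[y,z] \subseteq E$ whose relative interior meets $F$ is entirely contained in $F$.

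For the inclusion $F \cap \ext E \subseteq \ext F$, I would take $x \in F \cap \ext E$ and show $\{x\}$ is a face of $F$. If $y, z \in F$ and $x \in \relint [y,z]$, then in particular $y, z \in E$ and $x$ is in the relative interior of a segment in $E$; since $\{x\}$ is a face of $E$ the segment $[y,z]$ is contained in $\{x\}$, forcing $y = z = x$.

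For the reverse inclusion $\ext F \subseteq F \cap \ext E$, I would take $x \in \ext F$ and show $\{x\}$ is a face of $E$. Suppose $y, z \in E$ and $x \in \relint[y,z]$. Because $F$ is a face of $E$ and $x \in F$, the defining property of a face immediately gives $[y,z] \subseteq F$, so $y, z \in F$; then using that $\{x\}$ is a face of $F$, we conclude $y = z = x$, whence $x \in \ext E$.

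There is essentially no obstacle: the argument is a direct double application of the face axiom. The only subtlety worth noting is that we must consider the nontrivial case where $y \neq z$ (so $\relint[y,z]$ is the open segment) in the two implications; the trivial case $y = z = x$ is already covered. Thus no separate step is needed, and the statement follows.
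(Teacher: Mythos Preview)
Your proof is correct and is exactly the standard elementary argument. Note, however, that the paper does not actually supply a proof of this lemma: it is stated with a citation to \cite[Lemma~3]{biliotti-ghigi-heinzner-1-preprint} and no proof environment follows, so there is nothing in the present paper to compare against.
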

%(See \cite[Lemma 3]{biliotti-ghigi-heinzner-1-preprint}.)
%  \begin{proof}
%    It is immediate that $F \cap \ext E \subset \ext F$. The converse
%    follows from the definition of a face.
%  \end{proof}
 \begin{lemma}
   \label{convex-orbit}
   If $G$ is a compact group and $V$ is a representation space of $G$ define
   \begin{gather*}
     \rho : V \ra V^G \qquad \rho(v) := \int_G gx \, dg
   \end{gather*}
   where $dg$ denotes the Haar measure on $G$. Then $V=V^G\oplus \ker
   \rho$. If $x\in V$ and $x=x_0 + x_1$ in this decomposition, then
   \begin{enumerate}
   \item   $G\cd x = x_0 + G\cd x_1$;
 \item  $\conv (G\cd x) = x _0 + \conv (G\cd x_1)$;
 \item  $x_0$ is the unique fixed point of $G$ contained in $\conv (G\cd
   x)$;
 \item $x_0 \in \relint \conv ( G\cd x)$.
   \end{enumerate}
 \end{lemma}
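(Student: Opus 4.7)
The plan is to start by checking that $\rho$ is an idempotent linear map with image $V^G$, which gives the decomposition $V=V^G\oplus\ker\rho$ for free. By left-invariance of Haar measure, $h\rho(v)=\int_G hgv\,dg=\int_G g'v\,dg'=\rho(v)$, so $\rho(v)\in V^G$ for every $v$; conversely $\rho$ is the identity on $V^G$. With $x=x_0+x_1$ as in the statement, the identity $gx=x_0+gx_1$ (using $gx_0=x_0$) gives at once (a) and, after taking convex hulls and using that convex hulls are translation-equivariant, also (b).

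Part (c) follows by applying $\rho$ to a convex combination $y=\sum t_i g_i x$ and using linearity together with $\rho(g_i x)=\rho(x)=x_0$ to get $\rho(y)=x_0$. Thus $\rho$ sends all of $\conv(G\cd x)$ to the single point $x_0$. Any $G$-fixed point $y\in\conv(G\cd x)$ therefore satisfies $y=\rho(y)=x_0$. Existence of $x_0$ in the convex hull is not an issue: $x_0=\int_G gx\,dg$ is the barycenter of the compact orbit with respect to the pushforward of Haar measure, hence lies in its closed convex hull.

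The substantive step is (d). Using (b) it suffices to show that $0\in\relint\conv(G\cd x_1)$. Set $W:=\spam(G\cd x_1)$ and $C:=\conv(G\cd x_1)\subset W$. As $0=\rho(x_1)\in C$, the affine hull of $C$ is exactly $W$. Suppose for contradiction that $0$ lies in the relative boundary of $C$. Then the supporting hyperplane theorem in $W$ supplies a nonzero $\ell\in W^*$ with $\ell(y)\le 0$ for all $y\in C$. The $G$-invariance of $C$ gives $\ell(g^{-1}y)\le 0$ for all $g\in G$ and $y\in C$; specializing to $y=x_1$ and reindexing yields $\ell(gx_1)\le 0$ for every $g\in G$. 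But then
\[
\int_G \ell(gx_1)\,dg=\ell\!\left(\int_G gx_1\,dg\right)=\ell(\rho(x_1))=\ell(0)=0,
\]
and continuity of $g\mapsto\ell(gx_1)$ combined with the pointwise nonpositivity forces this function to vanish identically. Hence $\ell$ annihilates $G\cd x_1$ and therefore all of $W$, contradicting $\ell\ne 0$.

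The only real obstacle is the relative interior statement (d): one must convert the mere fact that $x_0$ is a convex combination of orbit points (which is clear from $\rho(x)=x_0$) into an interior property. The mechanism used above is the rigidity provided by $G$-invariance, which promotes a supporting hyperplane at the barycenter into one that must annihilate the entire spanning orbit.
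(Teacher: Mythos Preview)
Your argument is correct. Parts (a)--(c) match the paper almost verbatim: $\rho$ is an idempotent with image $V^G$, so the decomposition follows, and the orbit and its convex hull translate by $x_0$; for (c) the paper observes directly that a second fixed point $y$ would have $y_1\in V^G\cap\ker\rho=\{0\}$, which is exactly the content of your $\rho(y)=x_0$ computation.

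For (d) you take a genuinely different route. The paper appeals to the face decomposition (Theorem~\ref{schneider-facce}): there is a unique face $F$ of $\conv(G\cdot x)$ with $x_0\in\relint F$, and since $x_0$ is fixed by $G$ this face is $G$-invariant; then $\ext F$ is a $G$-invariant subset of $\ext\conv(G\cdot x)=G\cdot x$, hence equals $G\cdot x$, forcing $F=\conv(G\cdot x)$. Your supporting-hyperplane-plus-averaging argument is more self-contained---it avoids invoking the face stratification entirely and instead extracts the conclusion from the single identity $\int_G \ell(gx_1)\,dg=0$. The paper's approach, on the other hand, previews the mechanism used repeatedly later (e.g.\ in Proposition~\ref{deco-prop} and in the stratification results): a $G$-fixed point determines a $G$-invariant face, which is then forced to be large because its extreme set must be an entire orbit. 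Both are short and clean; yours is the more elementary, while the paper's is the one that generalizes to the subsequent arguments.
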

 \begin{proof}
   That $V=V^G\oplus \ker \rho$ follows from the fact that $\im \rho =
   V^G$ and $\rho^2 = \rho$. (a) and (b) are immediate.  Since
   $x_0=\rho(x) $, it follows from the definition of $\rho$ that
   $x_0\in \conv (G\cd x)$.  If $y \in \conv (G\cd x)$ is another fixed
   point, then $y_0=x_0 $ and $y_1 \in \ker \rho \cap V^G$. Hence
   $y_1=0$ and $y=x_0$. This proves (c).  By Theorem
   \ref{schneider-facce} there is a unique face $F \subset \conv (G\cd
   x) $ such that $x_0\in \relint F$. Since $\conv (G\cd x)$ is
   $G$-invariant and $ x_0 $ is fixed by $G$, also $F$ is
   $G$-invariant, and hence also $\ext F$. Since $\ext F \subset \ext
   (\conv (G\cd x) ) = G\cd x$, it follows that $\ext F = G\cd x$ and
   hence that $F = \conv (G\cd x)$.
 \end{proof}

 \begin{lemma}
[\protect{\cite[Prop. 5]{biliotti-ghigi-heinzner-1-preprint}}]
   \label{u-cono}
   If $F \subset E$ is an exposed face, the set $\CF : = \{ u\in V:
   F=F_u(E) \}$ is a convex cone. If $G$ is a compact subgroup of
   $O(V)$ that preserves both $E$ and $F$, then $\CF$ contains a fixed
   point of $G$.
 \end{lemma}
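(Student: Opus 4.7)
The plan is to dispatch the two claims separately: first verifying that $\CF$ is a convex cone, then using Haar averaging to produce a $G$-fixed element when a compact group $G \subset O(V)$ preserves $E$ and $F$.

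For the cone property, closure under positive scaling is immediate since $F_{\lambda u}(E) = F_u(E)$ for $\lambda > 0$. For closure under addition, given $u_1, u_2 \in \CF$, subadditivity of the support function gives $h_E(u_1 + u_2) \le h_E(u_1) + h_E(u_2)$, with equality at a point $x \in E$ if and only if $\sx x, u_i \xs = h_E(u_i)$ for both $i$, i.e.\ if and only if $x \in F_{u_1}(E) \cap F_{u_2}(E) = F \cap F = F$. Hence $F_{u_1+u_2}(E) = F$, so $u_1 + u_2 \in \CF$.

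Next, since $G \subset O(V)$ preserves $E$ the support function is $G$-invariant, $h_E(gu) = h_E(u)$; combined with $G$-invariance of $F$ this gives $F_{gu}(E) = g \cd F_u(E) = g \cd F = F$, so $G$ acts on $\CF$. Pick any $u \in \CF$ (nonempty since $F$ is exposed) and set $\bar u := \int_G g u \, dg$. Translation invariance of the Haar measure makes $\bar u$ a $G$-fixed point, and it remains to verify $\bar u \in \CF$.

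The main subtlety is that $\CF$ need not be closed in general (for instance, the cone associated to an edge of a polytope is an open ray in an affine subspace), so one cannot simply take a limit of elements of $\CF$ and expect to stay there. The way around this is to exhibit $\bar u$ as an element of $\conv(G \cd u)$: by Carath\'eodory's theorem, in the finite-dimensional space $V$ the convex hull of the compact orbit $G \cd u$ is itself compact, and a standard Hahn-Banach separation argument places $\bar u$ in this closed convex hull. Since $\CF$ is convex and contains $G \cd u$, it contains $\conv(G \cd u)$, and therefore $\bar u \in \CF$. This containment in $\CF$ rather than merely in $\overline{\CF}$ is the only genuinely delicate point in the argument.
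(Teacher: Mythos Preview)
Your proof is correct. The paper itself does not supply a proof of this lemma but merely cites \cite[Prop.~5]{biliotti-ghigi-heinzner-1-preprint}, so there is no in-text argument to compare against directly. Your treatment of the cone property and of the $G$-invariance of $\CF$ is clean. The one step requiring care, as you correctly identify, is showing that the Haar average $\bar u = \int_G gu\,dg$ lands in $\conv(G\cdot u)$ rather than merely in its closure; your Carath\'eodory/separation argument handles this, but you could equally appeal to Lemma~\ref{convex-orbit} of the present paper, which records (and treats as essentially evident) that $\rho(u) \in \conv(G\cdot u)$, indeed $\rho(u) \in \relint\conv(G\cdot u)$.
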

%  \begin{proof}
%    Let $u_1, u_2 \in \CF$ and $\la_1 , \la_2 \geq 0$ and set $u = \la_1
%    u_1 + \la_2 u_2$. We need to prove that if at least one of $\la_1,
%    \la_2$ is strictly positive, then $F=F_u(E)$.  Assume for example
%    that $\la_1 >0$. It is clear that $h_E (u) \leq \la_1 h_E(u_1 ) +
%    \la_2 h_E(u_2)$.  If $x\in F$, then
%    \begin{gather*}
%      \sx x, u\xs = \la_1 \sx x, u_1 \xs + \la_2 \sx x, u_2\xs = \la_1
%      h_E(u_1 ) + \la_2 h_E(u_2).
%    \end{gather*}
%    Hence $h_E (u) = \la_1 h_E(u_1 ) + \la_2 h_E(u_2)$ and $F\subset
%    F_u(E)$. Conversely, if $x\in F_u (E)$, then
%    \begin{gather*}
%      0= h_E(u) -\sx x, u\xs = \la_1 ( h_E(u_1) - \sx x, u_1 \xs ) +
%      \la_2 ( h_E(u_2) - \sx x, u_2\xs).
%    \end{gather*}
%    Since $\la_1 >0$ we get $h_E(u_1) - \sx x, u_1 \xs = 0$, so $x\in
%    F_{u_1}(E) = F$.  Thus $F= F_u(E)$. This proves the first fact.  To
%    prove the second, pick any vector $u\in \CF$ and apply the previous
%    lemma to the orbit $G\cd u \subset \CF$: this yields a $G$-invariant
%    $\bar{u} \in \CF$.
%  \end{proof}
 \begin{teo} [\protect{\cite[p. 62]{schneider-convex-bodies}}]
   \label{schneider-facce} If $E$ is a compact convex set and
   $F_1,F_2$ are distinct faces of $E$ then $\relint F_1 \cap \relint
   F_2=\vacuo$. If $G$ is a nonempty convex subset of $ E$ which is
   open in its affine hull, then $G \subset\relint F$ for some face
   $F$ of $E$. Therefore $E$ is the disjoint union of \changed{the
     relative interiors of its} faces.
 \end{teo}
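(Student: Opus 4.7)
The plan is to establish the three assertions in sequence; the final one is an immediate corollary of the first two. The unifying technical device is that if $x\in\relint F$ for a face $F$, then any segment in $E$ ending at $x$ can be extended slightly past $x$ while remaining in $F$, which enables the face property to be invoked.

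For disjointness of relative interiors, I would assume $x\in\relint F_1\cap\relint F_2$ and show $F_1\subset F_2$. Given $y\in F_1$, since $x\in\relint F_1$ I can produce $z\in F_1$ with $x\in\relint[y,z]$. Then $y,z\in E$ and $\relint[y,z]\cap F_2\ni x$, so the face property of $F_2$ forces $[y,z]\subset F_2$; in particular $y\in F_2$. Swapping the roles of $F_1$ and $F_2$ gives equality.

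For the second assertion I would first prove the auxiliary statement that every $x\in E$ lies in the relative interior of a unique minimal face $F_x$. For existence, let $F_x$ be the intersection of all faces of $E$ that contain $x$; since the face property survives arbitrary intersection, $F_x$ is itself a face, and it is minimal by construction. The containment $x\in\relint F_x$ is shown by contradiction: if $x$ lay on the relative boundary of $F_x$, the standard supporting hyperplane theorem inside $\aff F_x$ would produce a proper exposed face $F'\subsetneq F_x$ containing $x$, and since a face of a face of $E$ is again a face of $E$, this would contradict minimality. Uniqueness is exactly the first assertion. Given the lemma, fix $x\in G$ and set $F:=F_x$; for any $y\in G$, the relative openness of $G$ in its affine hull supplies $z\in G\subset E$ with $x\in\relint[y,z]$, and the face property of $F$ yields $[y,z]\subset F$, so $G\subset F$. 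Running the same argument with $y$ in place of $x$ shows $F_y\supset G\ni x$, and minimality forces $F_y=F_x$; hence every $y\in G$ lies in $\relint F_x$.

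The third assertion is now automatic: singletons $\{x\}\subset E$ are convex and open in their affine hulls, so every point of $E$ lies in $\relint F$ for some face $F$, and by the first assertion this face is unique, exhibiting $E$ as the disjoint union of the relative interiors of its faces. The only non-formal ingredient in the whole argument is the supporting hyperplane step used to prove $x\in\relint F_x$, and even this is a direct appeal to the classical separation theorem for compact convex subsets within their affine hulls; the main conceptual obstacle is to recognize the right intermediate lemma and order the implications so that the face-in-face and arbitrary-intersection-of-faces observations can be used without circularity.
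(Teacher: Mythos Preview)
The paper does not supply a proof of this theorem; it is quoted verbatim from Schneider's book and used as a black box. Your argument is correct and is essentially the standard textbook proof: the key observations that arbitrary intersections of faces are faces, that a face of a face is a face, and that a relative-boundary point of a compact convex set lies in a proper exposed face (via the supporting hyperplane theorem) are exactly the ingredients Schneider uses. There is nothing to compare against in the paper itself.
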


 \begin{lemma}
 [\protect{\cite[Lemma 7]{biliotti-ghigi-heinzner-1-preprint}}]
   If $E$ is a compact convex set and $F\subsetneq E$ is a face, then
   $\dim F < \dim E$.
 \end{lemma}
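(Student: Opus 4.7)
The plan is to argue by contradiction: suppose $\dim F = \dim E$. Since $F\subset E$, we have $\aff F\subset \aff E$, and because both are affine subspaces of the same dimension they must coincide: $\aff F = \aff E$. A nonempty convex set has nonempty relative interior, so I can pick $x\in\relint F$. Since $\aff F=\aff E$, the set $\relint F$ is open in $\aff E$, so in particular $x\in\relint E$ as well.

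The aim is now to conclude $F=E$, which contradicts $F\subsetneq E$. Let $y\in E$ be arbitrary; we may assume $y\neq x$. Because $x\in\relint E$, there is a neighborhood of $x$ in $\aff E$ contained in $E$; consequently one can move from $x$ a little in the direction $x-y$ and stay in $E$. More precisely, there exists $\eps>0$ such that
\begin{equation*}
z := x+\eps(x-y) = (1+\eps)x - \eps y \in E.
\end{equation*}
Rewriting this identity gives
\begin{equation*}
x = \tfrac{1}{1+\eps}\, z + \tfrac{\eps}{1+\eps}\, y,
\end{equation*}
so $x\in\relint[y,z]$. Since $x\in F$, the defining property of a face applied to $[y,z]\subset E$ forces $[y,z]\subset F$, and in particular $y\in F$. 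As $y\in E$ was arbitrary, $E\subset F$, contradicting $F\subsetneq E$.

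The only nonroutine ingredient is the construction of the point $z$, which is the standard line-extension property of the relative interior; once that is in hand the face axiom does the rest. Hence no serious obstacle arises and the argument above should go through in a few lines.
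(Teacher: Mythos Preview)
Your argument is correct and is the standard direct proof: equal dimensions force $\aff F=\aff E$, hence $\relint F\subset\relint E$, and then the face axiom together with the line-extension property of the relative interior gives $E\subset F$.

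Note that the present paper does not actually supply a proof of this lemma; it merely cites it from \cite{biliotti-ghigi-heinzner-1-preprint}. Within the paper's own framework there is an even shorter route: once you have established $\relint F\subset\relint E$, Theorem~\ref{schneider-facce} (distinct faces have disjoint relative interiors) immediately forces $F=E$. Your argument has the advantage of being self-contained, while the alternative leverages a result already quoted in the preliminaries.
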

%  \begin{proof}
%    If $\dim F = \dim E$, then $\relint F$ is open in the affine span of
%    $E$, so $\relint F \subset \relint E$. By the previous theorem this
%    implies that $F=E$.
%  \end{proof}

 \begin{lemma}
 [\protect{\cite[Lemma 8]{biliotti-ghigi-heinzner-1-preprint}}]
   \label{face-chain}
   If $E$ is a compact convex set and $F\subset E$ is a face, then
   there is a chain of faces $ F_0=F \subsetneq F_1 \subsetneq \cds
   \subsetneq F_k=E $ which is maximal, in the sense that for any $i$
   there is no face of $E$ strictly contained between $F_{i-1}$ and
   $F_i$.
 \end{lemma}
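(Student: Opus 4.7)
The plan is to build the chain inductively from the bottom, at each step enlarging the current face by a face of $E$ of minimal possible dimension, and to use the preceding lemma ($\dim F' < \dim F$ whenever $F'\subsetneq F$ is a proper face) both to ensure termination and to certify maximality.

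Set $F_0 := F$. Suppose $F_0\subsetneq F_1\subsetneq\cdots\subsetneq F_{i-1}$ have already been constructed as faces of $E$. If $F_{i-1}=E$, stop and take $k:=i-1$. Otherwise the collection $\mathcal{G}_i$ of faces of $E$ that strictly contain $F_{i-1}$ is nonempty (it contains $E$), so the set $\{\dim G : G\in \mathcal{G}_i\}$ is a nonempty subset of $\{0,1,\dots,\dim E\}$ and admits a minimum. Choose $F_i\in \mathcal{G}_i$ of minimal dimension. Since $F_{i-1}\subsetneq F_i$ and $F_{i-1}$ is a face of $E$ contained in $F_i$, it is in particular a proper face of $F_i$, so the previous lemma (applied inside $F_i$) gives $\dim F_{i-1}<\dim F_i$. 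Hence the dimensions strictly increase along the chain, which therefore must reach $E$ after at most $\dim E - \dim F + 1$ steps.

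It remains to verify the maximality condition. Fix $i\ge 1$ and suppose, for contradiction, that there is a face $G$ of $E$ with $F_{i-1}\subsetneq G\subsetneq F_i$. Then $G\in \mathcal{G}_i$, and since $G$ is a face of $E$ sitting inside $F_i$, it is a face of $F_i$; being strictly smaller, the previous lemma gives $\dim G<\dim F_i$. This contradicts the choice of $F_i$ as a face in $\mathcal{G}_i$ of minimum dimension.

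The only delicate point is the standard observation that a face of $E$ contained in a face $F_i$ of $E$ is automatically a face of $F_i$; this follows straight from the definition, since the condition $\relint[x,y]\cap G\neq\emptyset\Rightarrow[x,y]\subset G$ for $x,y\in F_i$ is a particular case of the same condition for $x,y\in E$. With this in hand the argument above is essentially combinatorial: no convex-geometric input beyond the finite-dimensionality of the affine hulls is needed.
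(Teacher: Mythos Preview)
Your argument is correct. The paper does not actually prove this lemma; it merely quotes it from \cite{biliotti-ghigi-heinzner-1-preprint}, so there is no proof in the present paper to compare against. Your inductive construction---picking at each step a face of $E$ of minimal dimension strictly containing the current one, and invoking the preceding lemma (\emph{a proper face has strictly smaller dimension}) both for termination and for the maximality check---is the standard and expected route, and all the steps are sound, including the observation that a face of $E$ contained in another face is automatically a face of the latter.
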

 \begin{lemma}
 [\protect{\cite[Lemma 9]{biliotti-ghigi-heinzner-1-preprint}}]
 \label{micro-convesso} If $E$ is a convex subset of
   $\R^n$, $M\subset \R^n$ is an affine subspace and $F\subset E$ is a
   face, then $F\cap M $ is a face of $E\cap M$.
 \end{lemma}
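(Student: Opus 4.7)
The statement is a very direct consequence of the definitions of ``face'' and ``affine subspace'', so my plan is essentially a short definition-check rather than a substantial argument.

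First I would observe that $F\cap M$ is convex, being the intersection of two convex sets, so the only thing to verify is the face-stability property: if $x,y \in E\cap M$ and $\relint[x,y]\cap (F\cap M)\neq\vacuo$, then $[x,y]\subset F\cap M$. I would split this into the $M$-part and the $F$-part.

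For the $M$-part, since $M$ is affine and contains both $x$ and $y$, it contains the entire segment $[x,y]$. For the $F$-part, any point of $\relint[x,y]\cap (F\cap M)$ lies in $\relint[x,y]\cap F$, so by the hypothesis that $F$ is a face of $E$ (and that $x,y\in E$), the whole segment $[x,y]$ is contained in $F$. Combining the two inclusions gives $[x,y]\subset F\cap M$, which is exactly the required face condition for $F\cap M$ inside $E\cap M$.

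There is no real obstacle: the argument uses nothing beyond the definition of a face and the fact that affine subspaces are closed under affine combinations. The only mildly delicate point is to notice that $\relint[x,y]$ here refers to the relative interior of the segment inside $\R^n$ (which coincides with its relative interior inside $M$, since $[x,y]\subset M$), so no ambiguity arises when we pass from ``face of $E$'' to ``face of $E\cap M$''.
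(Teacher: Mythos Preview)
Your proof is correct: the argument is exactly the short definition-check you describe, and the only ingredients needed are the face condition for $F$ in $E$ and the fact that an affine subspace contains any segment joining two of its points. The paper itself does not give a proof of this lemma but simply cites it from \cite{biliotti-ghigi-heinzner-1-preprint}, so there is nothing further to compare.
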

% \begin{proof}
%   If $x,y \in E\cap M$ and $\relint [x,y] \cap F\cap M \neq \vacuo$
%   then $[x, y] \subset F$ since $F$ is a face, but $[x,y]$ is also
%   contained in $M$ since $M$ is affine. So $[x,y] \subset F\cap M$ as
%   desired.
% \end{proof}
%

 \subsection{Compatible subgroups}
\label{comp-subgrous}
(See \cite{heinzner-schwarz-stoetzel, heinzner-stoetzel-global}.)  If
 $G$ is a Lie group with Lie algebra $\lieg$ and $E, F \subset \lieg$,
 we set
 \begin{gather*}
   E^F :=     \{\eta\in E: [\eta, \xi ] =0, \forall \xi \in F\} \\
   G^F = \{g\in G: \Ad g (\xi ) = \xi, \forall \xi \in F\}.
 \end{gather*}
 If $F=\{\beta\}$ we write simply $E^\beta$ and $G^\beta$. Let $U$ be
 compact Lie group. Let $U^\C$ be its universal complexification which
 is a linear reductive complex algebraic group. We
   denote by $\theta$ both the conjugation map $\theta : \liu^\C \ra
   \liu^\C$ and the corresponding group isomorphism $\theta : U^\C \ra
   U^\C$.  Let $f: U \times i\liu \ra U^\C$ be the diffeomorphism
 $f(g, \xi) = g \exp \xi$.  Let $G\subset U^\C$ be a closed
 subgroup. Set $K:=G\cap U$ and $\liep:= \lieg \cap i\liu$.  We say
 that $G$ is \enf{compatible} if $f (K \times \liep) = G$. The
 restriction of $f$ to $K\times \liep$ is then a diffeomorphism onto
 $G$. It follows that $K$ is a maximal compact subgroup of $G$ and
 that $\lieg = \liek \oplus \liep$.  Note that $G$ has finitely many
 connected components. Since $U$ can be embedded in $\Gl(N,\C)$ for
 some $N$, and any such embedding induces a closed embedding of
 $U^\C$, any compatible subgroup is a closed linear group. Moreover
 $\lieg$ is a real reductive Lie algebra, hence $\lieg =
 \liez(\lieg)\oplus [\lieg, \lieg]$. Denote by $G_{ss}$ the analytic
 subgroup tangent to $[\lieg, \lieg]$. Then $G_{ss}$ is closed and
 $G=Z(G)^0\cd G_{ss}$ \cite[p. 442]{knapp-beyond}.
 \begin{lemma}\label{lemcomp}
   \begin{enumerate}
   \item \label {lemcomp1} If $G\subset U^\C$ is a compatible
     subgroup, and $H\subset G$ is closed and $\theta$-invariant,
     then $H$ is compatible if and only if $H$ has only finitely many connected components.
   \item \label {lemcomp2} If $G\subset U^\C$ is a connected
     compatible subgroup, then $G_{ss}$ is compatible.
     \item \label{lemcomp3} If $G\subset U^\C$ is a compatible
       subgroup, and $E\subset \liep$ is any subset, then $G^E$ is
       compatible.
   \end{enumerate}
 \end{lemma}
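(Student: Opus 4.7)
The plan is to derive (b) as a special case of (a) and to prove (c) by a direct computation independent of (a); the work is then concentrated in (a), whose nontrivial direction contains the main difficulty.

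For (a) the direction ``compatible $\Rightarrow$ finitely many components'' is straightforward: $f$ restricts to a homeomorphism $K_H \times \liep_H \to H$, where $K_H := K \cap H$ and $\liep_H := \liep \cap \lieh$, and $K_H$ is closed in the compact group $K = G \cap U$, hence compact and with finitely many components. For the converse, assume $H$ closed, $\theta$-invariant, with finitely many components. Because $\lieh$ is $\theta$-stable, $\lieh = \liek_H \oplus \liep_H$ with $\liek_H = \liek \cap \lieh$. First I would show that $H^0$ itself is compatible: the set $(K_H)^0 \exp(\liep_H)$ is the image under the diffeomorphism $f$ of the closed set $(K_H)^0 \times \liep_H$, so it is closed in $G$; it is connected and contains $e$, so it lies in $H^0$; and its dimension equals $\dim \liek_H + \dim \liep_H = \dim \lieh = \dim H^0$. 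Hence it is clopen in $H^0$ and coincides with $H^0$.

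The next step lifts compatibility from $H^0$ to $H$. Given $h \in H$, write its polar decomposition in $G$ as $h = k \exp \xi$; by $\theta$-invariance $\theta(h) = k \exp(-\xi) \in H$, whence $\theta(h)^{-1} h = \exp(2\xi) \in H$. Since $H/H^0$ is finite there is a positive integer $N$ with $\exp(2N\xi) \in H^0 = (K_H)^0 \exp(\liep_H)$, and uniqueness of the polar decomposition in $G$ forces $2N\xi \in \liep_H$, hence $\xi \in \liep_H$. Then $\exp \xi \in H$ and $k = h \exp(-\xi) \in K \cap H = K_H$, so $h \in K_H \exp(\liep_H)$. I expect this last step to be the main obstacle: it passes from the a priori weaker information that some positive integer multiple of $\xi$ belongs to $\liep_H$ to the pointwise statement $\xi \in \liep_H$, and the argument depends essentially on combining the uniqueness of polar decomposition in $G$ with the compatibility of $H^0$ just established.

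Part (b) is then immediate: $[\lieg,\lieg]$ is $\theta$-stable because $\theta$ is a Lie algebra automorphism, so $G_{ss}$ is closed (as recorded in the excerpt), $\theta$-invariant, and connected, in particular has a single component, and (a) applies. For part (c) I proceed directly, without invoking (a). The key observation is that for $\eta \in \liep$ the operator $\ad \eta$ interchanges $\liek$ and $\liep$, so $e^{\ad \eta} = \cosh(\ad \eta) + \sinh(\ad \eta)$, where cosh preserves and sinh interchanges these summands. For $g = k \exp \eta$ in $G$ and $\xi \in E \subset \liep$, the equation $\Ad(g)\xi = \xi$ splits into a $\liek$-part $\Ad(k)\sinh(\ad \eta)\xi = 0$ and a $\liep$-part $\Ad(k)\cosh(\ad \eta)\xi = \xi$. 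Self-adjointness of $\ad \eta$ with respect to the $U$-invariant inner product on $\liu^\C$ makes the first equation equivalent to $[\eta,\xi]=0$. Imposing this for every $\xi \in E$ gives $\eta \in \liep^E$, whence $\exp \eta \in G^E$ and $k \in K^E := K \cap G^E$. Compatibility $G^E = K^E \exp(\liep^E)$ follows, and finiteness of the number of components of $G^E$ follows from the compactness of $K^E \subset K$.
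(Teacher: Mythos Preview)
Your argument for (a) is correct and follows the same line as the paper's: establish compatibility of the identity component by a dimension/clopen argument, then use the polar decomposition and the finiteness of $H/H^0$ to push an element $\exp(2N\xi)$ into $H^0$ and read off $\xi\in\liep_H$ from uniqueness. The paper phrases this at the level of $U^\C$ (proving more generally that any closed $\theta$-invariant subgroup of $U^\C$ with finitely many components is compatible), and uses $g\theta(g^{-1})=\exp(2\Ad(u)\xi)$ rather than your $\theta(h)^{-1}h=\exp(2\xi)$, but these are cosmetic differences. Part (b) is identical to the paper's.

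For (c) you genuinely diverge: the paper simply cites Knapp, while you give a self-contained proof via the $\cosh/\sinh$ splitting of $\Ad(\exp\eta)$. This works; the only point to tighten is your appeal to ``self-adjointness of $\ad\eta$ with respect to the $U$-invariant inner product on $\liu^\C$''---no such inner product is defined in the paper. What you actually need (and what the paper records just before the discussion of parabolic subalgebras) is that $\ad\eta$ is diagonalizable over $\R$ for $\eta\in\liep$; this alone gives $\ker\sinh(\ad\eta)=\ker\ad\eta$, which is the step you use. With that adjustment your direct argument is a clean replacement for the external citation.
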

 \begin{proof}
   \ref{lemcomp1} \changed{This follows from the more general
     observation that a closed $\theta$-invariant subgroup $G\subset
     U^\C$ is compatible if and only if it has finitely many connected
     components. This is proven in Lemma 1.1.3 in
     \cite[p.14]{miebach}. For the reader's convenience we recall the
     argument. If $G$ is compatible, then it retracts onto $K$, which
     is compact and therefore has finitely many connected
     components. Conversely assume that $G/G^0$ be finite.  Since $G$
     is closed, $f(K\times \liep)$ is a closed subset of $G$.  Since
     $G$ is $\theta$-invariant, $f(K\times \liep) $ has the same
     dimension as $G$ and is therefore also open. Therefore it
     contains $G^0$ and is a union of connected components of $G$.
     Given $g\in G$ write $g=u\exp\xi$ with $u\in U$ and $\xi \in i
     \liu$. Then $g\theta(g\meno) = \exp(2 \Ad(u)\xi) $ and since
     $G/G^0$ is finite there is a natural number $N>0$ such that
     $\bigl ( g \theta(g\meno) \bigr)^N = \exp(2 N \Ad(u)\xi) \in
     G^0$. Hence $\Ad(u)\xi \in \liep$, $u = \exp (-\Ad(u)\xi) g \in
     G\cap U = K$ and $\xi \in \liep$. } \ref{lemcomp2}. Since
   $[\lieg, \lieg]$ is $\theta$-invariant and $G_{ss}$ is connected,
   $G_{ss}$ is $\theta$-invariant. Since it is also closed, it is
   compatible by \ref{lemcomp1}.  \ref{lemcomp3} see \cite[Proposition
   7.25 p. 452]{knapp-beyond}.
 \end{proof}
 Let $\scalo$ be a fixed $U$-invariant scalar product on $\liu$. We
 use it to identifiy $\liu \cong \liu^*$.  We also denote by $\scalo$
 the scalar product on $i\liu$ such that multiplication by $i$ be an
 isometry of $\liu$ onto $i\liu$. One can define an
   $\R$-bilinear form $B$ on $\liu^\C$ by imposing $B(\liu, i
   \liu)=0$, $B= -\scalo$ on $\liu$ and $B= \scalo$ on $i\liu$.  Then
   $B$ is $\Ad U^\C$-invariant and nondegenerate. \label{def-B}
% \textcolor{Orchid}{This is used only in Prop. \ref{deco-prop}.

\subsection{Parabolic subgroups}
(See e.g. \cite[p. 28ff]{borel-ji-libro}, \cite{knapp-beyond}.)  If $G
\subset U^\C$ is compatible, $\lieg = \liek \oplus \liep$ is
reductive.  A subalgebra $\lieq \subset \lieg$ is \enf{parabolic} if
$\lieq^\C$ is a parabolic subalgebra of $\lieg^\C$.  One way to
describe the parabolic subalgebras of $\lieg$ is by means of
restricted roots.  If $\lia \subset \liep$ is a maximal subalgebra,
let $\roots(\lieg, \lia)$ be the (restricted) roots of $\lieg$ with
respect to $\lia$, let $\lieg_\la$ denote the root space corresponding
to $\la$ and let $\lieg_0 = \liem \oplus \lia$, where $\liem =
\liez_\liek(\lia)$.  Let $\simple \subset \roots(\lieg, \lia)$ be a
base and let $\roots_+$ be the set of positive roots. If $I\subset
\simple$ set $\roots_I : = \spam(I) \cap \roots$. Then
\begin{gather}
  \label{para-dec}
  \lieq_I:= \lieg_0 \oplus \bigoplus_{\la \in \roots_I \cup \roots_+}
  \lieg_\la
\end{gather}
is a parabolic subalgebra. Conversely, if $\lieq \subset \lieg$ is a
parabolic subalgebra, then there are a maximal subalgebra $\lia
\subset \liep$ contained in $\lieq$, a base $\simple \subset
\roots(\lieg, \lia)$ and a subset $I\subset \simple $ such that $\lieq
= \lieq_I$.  We can further introduce
\begin{gather}
\label{notaz-I}
\begin{gathered}
  \lia_I : = \bigcap_{\la \in I} \ker \la \qquad \lia^I := \lia_I^\perp \\
  \lien_I = \bigoplus_{\la \in \roots_+ \setminus \roots_I} \lieg_\la
  \qquad \liem_I : = \liem \oplus \lia^I \oplus \bigoplus_{\la \in
    \roots_I}\lieg_\la.
\end{gathered}
\end{gather}
Then $\lieq_I = \liem_I \oplus \lia_I \oplus \lien_I$. Since
$\theta\lieg _ \la = \lieg_{-\la}$, it follows that $ \lieq_I \cap
\theta\lieq_I = \lia_I \oplus \liem_I$.  This latter \changed{Lie algebra} coincides
with the centralizer of $\lia_I$ in $\lieg$. It is a \enf{Levi factor}
of $\lieq_I$ and
\begin{gather}
  \label{liaI}
  \lia_I =\liez (\lieq_I \cap \theta\lieq_I) \cap \liep.
\end{gather}
Another way to describe parabolic subalgebras of $\lieg$ is the
following.  If $\beta \in \liep$, the endomorphism $\ad \beta \in \End
\lieg$ is diagonalizable over $\R$. Denote by $ V_\la (\ad \beta) $ the
eigenspace of $\ad \beta$ corresponding to the eigenvalue $\la$.  Set
\begin{gather*}
  \lieg^{\beta+}: = \bigoplus_{\la \geq 0} V_\la (\ad \beta).
\end{gather*}
\begin{lemma}
  \label{para-beta}
  For any $\beta$ in $ \liep$, $\lieg^{\beta+}$ is a parabolic
  subalgebra of $\lieg$.  If $\lieq \subset \lieg$ is a parabolic
  subalgebra, there is some vector $\beta \in \liep$ such that $\lieq
  = \lieg^{\beta+}$.  The set of all such vectors is an open convex
  cone in
% $\liep$, whose linear span  coincides with
$\liez(\lieq \cap \theta\lieq) \cap \liep$.
\end{lemma}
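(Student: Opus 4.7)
The plan is to reduce the entire lemma to the standard restricted-root description of parabolic subalgebras just recalled. For part 1, I would fix $\beta \in \liep$ and extend it to a maximal abelian subalgebra $\lia \subset \liep$, which is possible because $\{\beta\}$ is abelian. With respect to the restricted root decomposition $\lieg = \lieg_0 \oplus \bigoplus_{\la \in \roots(\lieg, \lia)} \lieg_\la$, the endomorphism $\ad \beta$ is diagonal: it acts on each $\lieg_\la$ by the scalar $\la(\beta)$ and vanishes on $\lieg_0$. Picking a base $\simple$ whose closed positive Weyl chamber contains $\beta$ (possible because $\Weyl$ acts transitively on chambers), I set $I := \{\alpha \in \simple : \alpha(\beta) = 0\}$ and check that $\roots_I = \{\la \in \roots : \la(\beta) = 0\}$. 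Then $\lieg^{\beta+} = \lieg_0 \oplus \bigoplus_{\la(\beta) \geq 0} \lieg_\la = \lieq_I$, proving that $\lieg^{\beta+}$ is parabolic.

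For part 2, I would take an arbitrary parabolic $\lieq \subset \lieg$ and, using the description (\ref{para-dec}), write $\lieq = \lieq_I$ for some choice of $\lia$, $\simple$, $I$. Any vector $\beta \in \lia_I$ with $\alpha(\beta) > 0$ for all $\alpha \in \simple - I$ then satisfies $\la(\beta) = 0$ for $\la \in \roots_I$ and $\la(\beta) > 0$ for $\la \in \roots_+ - \roots_I$, so $\lieg^{\beta+} = \lieq_I = \lieq$.

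For part 3, the same construction exhibits an open convex cone of such $\beta$'s inside $\lia_I$, and by (\ref{liaI}) we have $\lia_I = \liez(\lieq \cap \theta\lieq) \cap \liep$. The main step is to show conversely that \emph{every} $\beta \in \liep$ with $\lieg^{\beta+} = \lieq$ lies in this subspace. Since $\beta \in \liep$ satisfies $\theta \beta = -\beta$, the relation $\theta \circ \ad \beta = -\ad \beta \circ \theta$ yields $\theta V_\la(\ad \beta) = V_{-\la}(\ad \beta)$, hence
\begin{equation*}
  \lieq \cap \theta \lieq \, = \, \lieg^{\beta+} \cap \theta \lieg^{\beta+} \, = \, V_0(\ad \beta) \, = \, \lieg^\beta.
\end{equation*}
In particular $\beta$ centralizes $\lieq \cap \theta\lieq$, so $\beta \in \liez(\lieq \cap \theta \lieq) \cap \liep = \lia_I$, and the set in question is exactly the set of $\beta \in \lia_I$ satisfying $\alpha(\beta) > 0$ for every $\alpha \in \simple - I$. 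This is manifestly an open convex cone.

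The main obstacle I anticipate is part 3, specifically the identification $\lieq \cap \theta\lieq = \lieg^\beta$: once this is in hand, both the ambient-space statement and the convex-cone description fall out immediately, but it depends critically on the interaction between the Cartan involution $\theta$ and the eigenspace decomposition of $\ad \beta$, together with the Levi description $\lieq_I \cap \theta \lieq_I = \lia_I \oplus \liem_I = \liez_\lieg(\lia_I)$ already recorded before the statement.
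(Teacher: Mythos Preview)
Your proposal is correct and follows essentially the same route as the paper: reduce to the restricted-root description, identify $\lieg^{\beta+}$ with $\lieq_I$ for $I=\{\alpha\in\simple:\alpha(\beta)=0\}$, and then pin down the set $\Omega=\{\beta:\lieg^{\beta+}=\lieq\}$ as the open cone $\{\beta\in\lia_I:\alpha(\beta)>0\text{ for }\alpha\in\simple\setminus I\}$. The only cosmetic difference is in the containment $\Omega\subset\lia_I$: the paper observes that $\lia\subset\lieq\cap\theta\lieq=\lieg^\beta$ forces $\beta\in\lia$, while you argue directly that $\lieq\cap\theta\lieq=\lieg^\beta$ via $\theta V_\la(\ad\beta)=V_{-\la}(\ad\beta)$ and conclude $\beta\in\liez(\lieq\cap\theta\lieq)\cap\liep=\lia_I$; these are the same computation, and if anything your version makes the key identity more explicit.
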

\begin{proof}
  Given $\beta$ choose a maximal subalgebra $\lia$ containing $\beta$
  and a base $\simple \subset \roots(\lieg, \lia)$ such that $\beta$
  lies in the closure of the positive Weyl chamber. Then
  $\lieg^{\beta+} = \lieq _I$ with $I:=\{\la \in \simple:
  \la(\beta)=0\}$. This proves the first assertion.  To prove the
  second fix a parabolic subalgebra $\lieq$ and set $\Omega:=\{\beta
  \in \liep: \lieg^{\beta+} = \lieq\}$. % and $V:=\spam(\Omega)$.
  Let $\lia $ be any maximal subalgebra of $ \liep$ contained in
  $\lieq$.
Then $\lieq = \lieq_I$ for some $I \subset \simple$ and
  \begin{gather}
    \label{Omegalia}
    \Omega \cap \lia = \{\beta \in \lia_I : \la (\beta) > 0 \text{
      for } \la \in \simple\setminus I\}.
  \end{gather}
  Thus $\Omega \cap \lia$ is a nonempty open convex cone in $\lia_I$.
  Therefore $\Omega \neq \vacuo$, which proves the second assertion.
  By \eqref{liaI} $\lia_I = \liez(\lieq \cap \theta\lieq) \cap \liep$,
  so $\Omega\cap \lia$ is an open convex cone in $\liez(\lieq \cap
  \theta\lieq) \cap \liep$.  \changed{Moreover for any $\beta \in
    \Omega$, $\lia \subset \lieq \cap \theta(\lieq) =
    \lieg^\beta$. Thus $[\beta, \lia ] = 0$, hence $\beta \in \lia$.
    So $\Omega \subset \lia$, i.e. $\Omega = \Omega \cap \lia$.
%  and $\lia_I = \spam(\Omega \cap \lia) \subset V$.
  }
%
% In particular $\lia_I$ only depends on
%   $\lieq$ (and $\theta$) but not on $\lia$. So
% %we have proved (a) that $
% %  \liez(\lieq \cap \theta\lieq) \cap \liep \subset V$ and (b) that
%   for any $\lia$ contained in $\lieq$, $\Omega \cap \lia \subset
%   \liez(\lieq \cap \theta\lieq) \cap \liep$.  Observe that if $\beta
%   \in \Omega$, then $\beta \in \lieq$. Hence we can choose a maximal
%   subalgebra $\lia$ in $ \liep$ which contains $\beta$ and is
%   contained in $\lieq$.  This shows that $ \Omega$ is the union of all
%   the cones $ \Omega \cap \lia$ as $\lia$ varies among the maximal
%   subalgebras of $\liep$ that are contained in $\lieq$.
% %
% %
% % We can fix a base $\simple ' \subset \roots(\lieg, \lia')$
% %   and a subset $I' \subset \simple'$ such that $\lieq = \lieq_{I'}$.
% % % The same argument as above shows now
% % %   that $\beta \in \lia_{I'}$.
% % % Using again \eqref{liaI} we get $\beta
% % %   \in \lia_{I'} = \liez(\lieq \cap \theta\lieq) \cap \liep$. Thus $V
% % %   \subset \liez(\lieq \cap \theta\lieq) \cap \liep$.
% %   Using \eqref{Omegalia} with $\lia'$ in place of $\lia'$, we get
% %   $\beta \in \lia'_{I'}$. So $\Omega \cap \lia' \subset $
% %   But \eqref{liaI} says that
% %   $\lia_I=\lia_{I'} = \liez(\lieq \cap \theta\lieq) \cap \liep$.
%   Thus $\Omega \subset \liez(\lieq \cap \theta\lieq) \cap \liep$ and
%   $\Omega = \Omega \cap \lia$ for any maximal subalgebra $\lia$
%   contained in $\lieq$.  The last assertion follows from
%   \eqref{Omegalia}.  %ensures that $\Omega$ is a convex cone.
\end{proof}
A \enf {parabolic subgroup} of $G$ is a subgroup of the form $Q =
N_G(\lieq)$ where $\lieq $ is a parabolic subalgebra of $\lieg$.
Equivalently, a parabolic subgroup of $G$ is a subgroup of the form
$P\cap G$ where $P$ is parabolic subgroup of $G^\C$ and $\liep$ is the
complexification of a subspace $\lieq \subset \lieg$.  If $\beta
\in \liep$ set
% \begin{gather*}
%   G^{\beta+} :=\{g \in G : \lim_{t\to - \infty} \exp({t\beta}) g
%   \exp({-t\beta}) \text { exists} \}\\
%   R^{\beta+} :=\{g \in G : \lim_{t\to - \infty} \exp({t\beta})
%   g \exp({-t\beta}) =e \}\\
%   \lier^{\beta+}: = \bigoplus_{\la > 0} V_\la (\ad \beta).
% \end{gather*}
\begin{gather*}
\begin{gathered}
  G^{\beta+} :=\{g \in G : \lim_{t\to - \infty} \exp({t\beta}) g
  \exp({-t\beta}) \text { exists} \}\\
  R^{\beta+} :=\{g \in G : \lim_{t\to - \infty} \exp({t\beta})
  g \exp({-t\beta}) =e \}
\end{gathered}
\qquad
  \lier^{\beta+}: = \bigoplus_{\la > 0} V_\la (\ad \beta).
\end{gather*}
Note that $ \lieg^{\beta+}= \lieg^\beta \oplus \lier^{\beta+}$.
\begin{lemma}
  $G^{\beta +} $ is a parabolic subgroup of $G$ with Lie algebra
  $\lieg^{\beta+}$.  Every parabolic subgroup of $G$ equals
  $G^{\beta+}$ for some $\beta \in \liep$.  $R^{\beta+}$ is the
  unipotent radical of $G^{\beta+}$ and $G^\beta$ is a Levi factor.
\end{lemma}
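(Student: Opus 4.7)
The plan is to prove the three assertions in sequence: first I would compute the Lie algebra of $G^{\beta+}$, then use the limit map $g \mapsto \lim_{t\to -\infty}\exp(t\beta)g\exp(-t\beta)$ to produce a semidirect product decomposition, and finally identify $G^{\beta+}$ with the standard parabolic subgroup attached to root data. To set up coordinates I would pick a maximal subalgebra $\lia \subset \liep$ containing $\beta$ and a base $\simple$ of $\roots(\lieg, \lia)$ such that $\beta$ lies in the closure of the positive Weyl chamber. By the proof of Lemma \ref{para-beta}, $\lieg^{\beta+} = \lieq_I$ with $I = \{\la \in \simple : \la(\beta) = 0\}$, and since $\la(\beta) > 0$ for $\la \in \roots_+ \setminus \roots_I$ while $\la(\beta) = 0$ for $\la \in \roots_I$, the notation in \eqref{notaz-I} yields $\lier^{\beta+} = \lien_I$ and $\lieg^\beta = \liem_I \oplus \lia_I$, the latter being the Levi factor of $\lieq_I$. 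To compute $\Lie(G^{\beta+})$ I would use the identity
\begin{equation*}
\exp(t\beta)\exp(sX)\exp(-t\beta) = \exp(s\, e^{t \ad \beta}X),
\end{equation*}
from which $\exp(sX) \in G^{\beta+}$ for small $s$ iff $e^{t\ad\beta}X$ has a limit as $t\to -\infty$, iff $X \in \lieg^{\beta+}$. The same computation gives $\Lie(R^{\beta+}) = \lier^{\beta+}$.

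Next I would analyse the limit map $\pi(g) := \lim_{t\to -\infty} \exp(t\beta) g \exp(-t\beta)$ on $G^{\beta+}$. Continuity of the group operations makes $\pi$ a continuous homomorphism whose image centralizes $\beta$, hence lies in $G^\beta$; since $\pi$ restricts to the identity on $G^\beta$ it is surjective there, and by construction $\ker \pi = R^{\beta+}$. Any element of $R^{\beta+} \cap G^\beta$ is fixed by the conjugation and also has limit $e$, hence equals $e$, so $G^{\beta+} = R^{\beta+} \rtimes G^\beta$ as a topological group. The factor $R^{\beta+}$ is connected and diffeomorphic to $\lier^{\beta+}$ via $\exp$, hence unipotent, while $G^\beta$ is reductive by Lemma \ref{lemcomp}.

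The final step is to identify $G^{\beta+}$ with $Q_I := N_G(\lieq_I)$. The inclusion $G^{\beta+} \subseteq N_G(\lieg^{\beta+})$ follows because for $X \in \lieg^{\beta+}$ and $g \in G^{\beta+}$ the expression
\begin{equation*}
\Ad(\exp(t\beta))\Ad(g)X = \Ad(\exp(t\beta) g \exp(-t\beta))\,\Ad(\exp(t\beta))X
\end{equation*}
is a product of two convergent factors as $t \to -\infty$, so $\Ad(g)X \in \lieg^{\beta+}$. For the reverse inclusion I would invoke the standard Langlands decomposition $Q_I = L_I \ltimes N_I$ with $L_I = Z_G(\lia_I)$ and $N_I = \exp \lien_I$, then check that $L_I = G^\beta$ (because $\la(\beta) > 0$ for $\la \in \simple \setminus I$ forces any centralizer of $\beta$ to centralize all of $\lia_I$) and $N_I = R^{\beta+}$; any $g \in Q_I$ then factors as $\ell n$ with both factors in $G^{\beta+}$. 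Combined with Lemma \ref{para-beta}, which gives every parabolic subalgebra as $\lieg^{\beta+}$ for some $\beta \in \liep$, this yields the entire statement.

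The main obstacle I expect is treating the non-identity components of $G^{\beta+}$ correctly: the Lie algebra computation and the limit map analysis are cleanest near the identity, but the equality $G^{\beta+} = Q_I$ must hold for the full groups, which requires a component-wise matching. The Langlands decomposition supplies this via $G^\beta$ (compatible by Lemma \ref{lemcomp}, and controlling the components on both sides), but the bookkeeping is where the real work lies.
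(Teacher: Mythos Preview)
Your argument is essentially correct, and it covers the same ground as the paper, but the organizing principle is different. The paper's proof pivots on the observation that $G^{\beta+} = (G^\C)^{\beta+} \cap G$, which reduces the entire question to the complex reductive group $G^\C$; there the identification of $G^{\beta+}$ with a parabolic subgroup is classical, since a closed connected subgroup of a complex reductive group whose Lie algebra is parabolic is automatically parabolic. The limit map and the product decomposition $G^{\beta+} = G^\beta \cdot R^{\beta+}$ appear in both arguments, but the paper uses them only to verify that $G^{\beta+}$ is a Lie subgroup with the expected Lie algebra, and then hands off to the complex theory.

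Your route instead stays entirely inside the real group: you fix root data, identify $\lieg^{\beta+}=\lieq_I$, $\lier^{\beta+}=\lien_I$, $\lieg^\beta=\liem_I\oplus\lia_I$, and then match $G^{\beta+}$ with $N_G(\lieq_I)$ by invoking the Langlands decomposition $Q_I = L_I\ltimes N_I$ and checking $L_I=G^\beta$, $N_I=R^{\beta+}$. This is legitimate---the Langlands decomposition is standard (e.g.\ in Knapp, already cited)---and it has the virtue of giving explicit identifications of the Levi factor and unipotent radical with the named objects $L_I$, $N_I$. The cost is that you import a structural theorem (Langlands decomposition) of roughly the same weight as what you are proving, whereas the paper's reduction to $G^\C$ is a one-line remark.

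Two small points to tighten. First, your claim that $R^{\beta+}$ is connected and that $\exp:\lier^{\beta+}\to R^{\beta+}$ is a diffeomorphism is asserted before you have identified $R^{\beta+}$ with $N_I$; you should either postpone it or give the direct argument (e.g.\ $\lier^{\beta+}$ is nilpotent, so $\exp$ is a diffeomorphism onto its image, and any $g\in R^{\beta+}$ close to $e$ lies in this image; then use that $R^{\beta+}$ is closed). Second, the equality $L_I=Z_G(\lia_I)=G^\beta$ hinges on $\beta$ being regular in $\lia_I$ in the sense that any root vanishing on $\beta$ vanishes on all of $\lia_I$; you state this but it deserves one more sentence, since it is exactly what promotes the centralizer of a single vector to the centralizer of the whole subspace.
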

\begin{proof}
  It is easy to check that $G^{\beta+}$ is a subgroup and that
  $G^{\beta + } = (G^\C)^{\beta+} \cap G$. Therefore it is enough to
  prove that $(G^\C) ^{\beta+}$ is parabolic. In other words we can
  assume that $G$ is a complex reductive group.  If $X \in \lieg$,
  then
  \begin{gather*}
    \exp(t\beta) \exp X \exp (-t\beta) = \exp (\Ad(\exp (t\beta)) \cd
    X ) = \exp( e^{t\ad \beta} \cd X)
  \end{gather*}
  where $e^{t\ad \beta}$ denotes the exponential in $\End (\lieg)$.
  Let $\Omega \subset \lieg$ be a neighbourhood of $0$ such that
  $\exp$ is \changed{a diffeomorphism} on $\Omega$. If $X \in \Omega$, then $\exp X
  \in R^{\beta+}$ if and only if $\lim_{t\to - \infty} e^{t\ad \beta} \cd X =0$
  if and only if $X \in \lier^{\beta+}$. This shows that $R^{\beta +}$ is locally
  closed, hence closed \cite[Prop. 2.11 p. 119]{helgason}. Next
  observe that if $g \in G^{\beta+}$, and
  \begin{gather*}
    a : = \lim_{t\to -\infty} \exp(t\beta) g \exp(-t\beta)
  \end{gather*}
  then $a \in G^\beta \subset G^{\beta+}$ and $a\meno g\in
  R^{\beta+}$. Therefore $G^{\beta+}$ is the product of the two closed
  subgroups $G^\beta $ and $R^{\beta+}$ and $G^\beta \cap R^{\beta+} =
  \{e\}$. It follows that $G^{\beta+}$ is a Lie subgroup of $G$
  tangent to $\lieg^{\beta+}$. \changed{Since we are now assuming that
    $G$ is complex, then} it is well-known that $G^{\beta+}$ is
closed and parabolic since its Lie algebra is parabolic.
\end{proof}

\subsection{Gradient momentum map}
\label{subsection-gradient-moment}
Let $(Z, \om)$ be a \Keler manifold. Assume that $U^\C$ acts
holomorphically on $Z$, that $U$ preserves $\om$ and that there is a
momentum map $\mu: Z \ra \liu$.  If $\xi \in \liu$ we denote by $\xi_Z$
the induced vector field on $Z$ and we let $\mu^\xi \in \cinf(Z)$ be
the function $\mu^\xi(z) := \sx \mu(z),\xi\xs$.  That $\mu$ is the
momentum map means that it is $U$-equivariant and that $d\mu^\xi =
i_{\xi_Z} \om$.

Let $G \subset U^\C$ be compatible.
% The map $\beta \mapsto -i\beta $ is an injection $\liep
% \hookrightarrow \liu$.
If $z \in Z$, let $\mup (z) \in \liep$ denote $-i$ times the component
of $\mu(z)$ in the direction of $i\liep$.  In other words we require
that $\sx \mup (z) , \beta \xs = - \sx \mu(z) , i\beta\xs$ for any
$\beta \in \liep$.  (Recall that multiplication by $i$ is an isometry
of $\liu $ onto $i \liu$.) We have thus defined the \emph{gradient
  momentum map}
\begin{gather*}
  \mu_\liep : Z \ra \liep.
\end{gather*}
Let $\mup^\beta \in \cinf(Z)$ be the function $ \mup^\beta(z) = \sx
\mup(z) , \beta\xs = \mu^{-i\beta}(z)$.  Let $\metrica$ be the \Keler
metric associated to $\om$, i.e. $(v, w) = \om (v, Jw)$. Then
$\beta_Z$ is the gradient of $\mup^\beta$. If $X \subset Z$ is a
locally closed $G$-invariant submanifold, then $\beta_X$ is the
gradient of $\mup^\beta \restr{X}$ with respect to the induced
Riemannian structure on $X$.

\begin{teo}
  [Slice Theorem \protect{\cite[Thm. 3.1]{heinzner-schwarz-stoetzel}}]
  If $x \in X$ and $\mup(x) = 0$, there are a $G_x$-invariant
  decomposition $T_xX = \lieg \cd x \oplus W$, open $G_x$-invariant
  subsets $S \subset W$, $\Omega \subset X$ and a $G$-equivariant
  diffeomorphism $\Psi : G \times^{G_x}S \ra \Omega$, such that $0\in
  S, x\in \Omega$ and $\Psi ([e, 0]) =x$.
\end{teo}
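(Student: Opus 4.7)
The plan is to adapt the standard Luna-type slice argument to the setting of compatible subgroups, with the hypothesis $\mu_\liep(x)=0$ providing the reductivity one needs at $x$.

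\textbf{Step 1: The stabilizer $G_x$ is compatible.} Because $G$ acts on $X$ with finitely many components and $G_x$ is closed, $G_x$ has finitely many components. I would then show $G_x$ is $\theta$-invariant, which by Lemma \ref{lemcomp}\ref{lemcomp1} makes $G_x$ compatible, hence reductive: $G_x = K_x \cdot \exp(\lieg_x \cap \liep)$ and $\lieg_x = \liek_x \oplus (\lieg_x \cap \liep)$. The key point is that at $x$, for $\xi \in \liep$ the vector $\xi_X(x)$ is the gradient of $\mu_\liep^\xi$; combined with $\mu_\liep(x)=0$, one checks that $\theta\lieg_x = \lieg_x$ using the gradient identity and the fact that the induced Riemannian metric on $X$ is $K$-invariant.

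\textbf{Step 2: Constructing $W$.} Since $G_x$ is reductive, there is a $G_x$-invariant complement $W$ of $\lieg\cdot x$ in $T_x X$. Concretely, at a zero of $\mu_\liep$ the spaces $\liek \cdot x$ and $\liep\cdot x$ are perpendicular with respect to the induced metric $\metrica$, so the orthogonal complement $W := (\lieg\cdot x)^\perp$ in $T_x X$ is $K_x$-invariant (since $K_x$ acts by isometries) and is also preserved by $d\exp(\xi)_x$ for $\xi \in \lieg_x \cap \liep$, since such $\xi$ fix $x$ and their differential preserves the $\metrica$-orthogonal decomposition at $x$.

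\textbf{Step 3: Defining $\Psi$ and local injectivity.} Using a $K$-invariant Riemannian metric on a neighborhood of $K\cdot x$ (e.g.\ restricted from the ambient Kähler metric and averaged), I define $\psi: W \to X$ near $0$ by the exponential $\psi(w)=\exp_x(w)$. This is $K_x$-equivariant, so passing to a $G_x$-invariant open ball $S \subset W$ it extends $G$-equivariantly to $\Psi: G \times^{G_x} S \to X$, $\Psi([g,w])=g\cdot \psi(w)$. The differential of $\Psi$ at $[e,0]$ is
\begin{gather*}
  (\xi + \lieg_x, w) \longmapsto \xi_X(x) + w,
\end{gather*}
which is an isomorphism $\lieg/\lieg_x \oplus W \to \lieg\cdot x \oplus W = T_xX$. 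Hence $\Psi$ is a local diffeomorphism at $[e,0]$, and by $G$-equivariance on all of $G\times^{G_x}\{0\}$.

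\textbf{Step 4: Global injectivity.} The main obstacle is upgrading from a local to a global diffeomorphism onto a $G$-saturated open set. I would first shrink $S$ using the properness of the $K$-action to obtain a $K$-slice $K\times^{K_x} S \to \Omega_K$ through $x$. Then, to handle the $\exp\liep$-directions, one exploits that on a small enough neighborhood of $x$ the function $\|\mu_\liep\|^2$ is controlled (since $\mu_\liep(x)=0$) and the gradient flows in directions $\beta\in\liep$ separate $K$-orbits in the slice. Suppose $\Psi([g_1,w_1]) = \Psi([g_2,w_2])$; writing $g_2^{-1}g_1 = k\exp\xi$ via the Cartan decomposition and choosing $S$ small enough, the Slice Theorem for $K$ together with the injectivity of $\xi \mapsto \exp(\xi)\cdot y$ in a neighborhood of $y=x$ (valid since $\mu_\liep(x)=0$) forces $k\exp\xi \in G_x$ and $w_1=w_2$. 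I expect Step 4 to be the technically heaviest part, as it is where the Kähler/gradient momentum map geometry is genuinely needed and not merely the Lie-theoretic reductivity of Step 1.
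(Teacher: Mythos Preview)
The paper does not give its own proof of this statement: the Slice Theorem is quoted directly from \cite[Thm.~3.1]{heinzner-schwarz-stoetzel} and used as a black box, with only the sentence ``Here $G\times^{G_x}S$ denotes the associated bundle with principal bundle $G\ra G/G_x$'' following it. So there is no in-paper argument to compare your proposal against.

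As for your outline itself, the overall Luna-type strategy is the right one and is in the spirit of the cited reference, but two of your steps are not yet proofs. In Step~2 you claim that for $\xi\in\lieg_x\cap\liep$ the differential $d(\exp\xi)_x$ preserves the $\metrica$-orthogonal splitting $T_xX=\lieg\cd x\oplus W$; this does not follow from what you wrote, since $\exp\xi$ is not an isometry of $\metrica$, and $K_x$-invariance of the metric says nothing about the noncompact part of $G_x$. One needs an argument that genuinely uses the reductivity of $G_x$ (obtained in Step~1) together with the ambient K\"ahler/complex structure to produce a $G_x$-invariant complement, not just a $K_x$-invariant one. Step~4 is, as you anticipate, the substantive part: the passage from a $K$-slice to a $G$-slice requires a Kempf--Ness type properness argument driven by $\|\mu_\liep\|^2$, and your sketch (``the injectivity of $\xi\mapsto\exp(\xi)\cd y$ \ldots\ forces $k\exp\xi\in G_x$'') does not yet supply that mechanism. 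If you want a self-contained proof you should consult the cited paper for these two points.
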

Here $G \times^{G_x}S$ denotes the associated bundle with principal
bundle $G \ra G/G_x$. .
% This is Theorem 3.1 in \cite{heinzner-schwarz-stoetzel}.
\begin{cor} \label{slice-cor} If $x \in X$ and $\mup(x) = \beta$,
  there are a $G^\beta$-invariant decomposition $T_xX = \lieg^\beta
  \cd x \, \oplus W$, open $G^\beta$-invariant subsets $S \subset W$,
  $\Omega \subset X$ and a $G^\beta$-equivariant diffeomorphism $\Psi
  : G^\beta \times^{G_x}S \ra \Omega$, such that $0\in S, x\in \Omega$
  and $\Psi ([e, 0]) =x$.
\end{cor}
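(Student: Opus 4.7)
The plan is to reduce to the already stated Slice Theorem by passing from $G$ to the centralizer $G^\beta$ and shifting the gradient momentum map by the constant $\beta$.

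First I would verify that $G_x \subset G^\beta$, which is what makes the associated bundle $G^\beta \times^{G_x} S$ appearing in the conclusion meaningful. By the standard theory of gradient momentum maps for actions of compatible groups, $G_x$ is itself compatible, so $G_x = K_x \cd \exp(\liep_x)$. For $k \in K_x$, the $K$-equivariance of $\mup$ gives $\beta = \mup(x) = \mup(kx) = \Ad(k)\beta$, so $K_x \subset K^\beta$. For $\xi \in \liep_x$ one has $\xi_Z(x) = 0$, hence also $(-i\xi)_Z(x) = -J\xi_Z(x) = 0$. Applying the infinitesimal $U$-equivariance $d\mu_z(\eta_Z(z)) = [\eta, \mu(z)]$ with $\eta = -i\xi \in \liu$ gives $[-i\xi, \mu(x)] = 0$, equivalently $[\xi, \mu(x)] = 0$. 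Writing $\mu(x) = \mu_\liek(x) + i\beta$ in $\liu = \liek \oplus i\liep$, the identity
\[
[\xi, \mu(x)] = [\xi, \mu_\liek(x)] + i[\xi, \beta]
\]
decomposes into a $\liep$-part and an $i\liek$-part, each of which must vanish. In particular $[\xi, \beta] = 0$, so $\xi \in \liep^\beta$. Hence $G_x \subset G^\beta$, and so $G^\beta_x = G_x$.

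Second I would introduce the appropriate gradient momentum map for the $G^\beta$-action on $Z$. By Lemma \ref{lemcomp}\ref{lemcomp3}, $G^\beta$ is a compatible subgroup with Cartan decomposition $\lieg^\beta = \liek^\beta \oplus \liep^\beta$, and its gradient momentum map is $\mup^{G^\beta} := \pi_{\liep^\beta} \circ \mup$, which takes value $\beta$ at $x$ since $\beta \in \liep^\beta$. Because $\beta$ is central in $\lieg^\beta$ and in particular $\Ad K^\beta$-fixed, the shifted map $\tilde\mup := \mup^{G^\beta} - \beta$ is still $K^\beta$-equivariant; and for each $\xi \in \liep^\beta$, $\tilde\mup^\xi$ differs from $(\mup^{G^\beta})^\xi$ only by the constant $-\sx \beta, \xi \xs$, hence has the same gradient $\xi_Z$. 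Therefore $\tilde\mup$ is a bona fide gradient momentum map for the $G^\beta$-action on $Z$, and $\tilde\mup(x) = 0$ by construction.

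Finally I would apply the Slice Theorem verbatim to the $G^\beta$-action on the $G^\beta$-invariant submanifold $X$ at $x$, using $\tilde\mup$ in place of $\mup$. This yields a $G^\beta_x$-invariant splitting $T_xX = \lieg^\beta \cd x \oplus W$, open $G^\beta_x$-invariant $S \subset W$ and open $G^\beta$-invariant $\Omega \subset X$ with $0 \in S$ and $x \in \Omega$, and a $G^\beta$-equivariant diffeomorphism $\Psi : G^\beta \times^{G^\beta_x} S \ra \Omega$ with $\Psi([e, 0]) = x$. Substituting $G^\beta_x = G_x$ from the first step gives exactly the stated conclusion. The main obstacle is the first step: granting the compatibility of $G_x$ (a standard but nontrivial fact in the Hamiltonian theory of compatible groups), the infinitesimal-equivariance argument above is short, and the remainder of the proof is a routine shift-and-apply of the Slice Theorem.
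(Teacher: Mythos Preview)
Your overall strategy---pass to the compatible subgroup $G^\beta$, shift the momentum map by the central element $\beta$, and invoke the Slice Theorem---is exactly what the paper does. The paper's proof is a one-line remark that one applies the previous theorem to $G^\beta$ with the shifted momentum map $\widehat{\mu_{\liu^\beta}} := \mu_{\liu^\beta} - i\beta$, deferring details such as $G_x \subset G^\beta$ and the compatibility of $G_x$ to \cite{heinzner-schwarz-stoetzel}; you supply more of these details yourself.

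There is, however, a gap in your argument for $\liep_x \subset \liep^\beta$. You write $\mu(x) = \mu_\liek(x) + i\beta$ ``in $\liu = \liek \oplus i\liep$'', but this decomposition of $\liu$ holds only when $\lieg$ is a real form of $\liu^\C$. For a general compatible subgroup $G \subset U^\C$ one has only $\liek \oplus i\liep \subset \liu$, possibly strictly, so you cannot split $[\xi,\mu(x)]$ into a $\liep$-part and an $i\liek$-part as you do. The conclusion is nonetheless correct and the repair is short: write $\mu(x) = i\beta + c$ with $c \in (i\liep)^\perp \subset \liu$; then $[\xi,\mu(x)]=0$ gives $[\xi,\beta] = [i\xi, c]$ in $\liu$, and for any $\eta \in \liek$ the $\Ad U$-invariance of $\scalo$ yields
\[
\sx [\xi,\beta], \eta\xs \;=\; \sx [i\xi,c],\eta\xs \;=\; -\sx c, [i\xi,\eta]\xs \;=\; 0,
\]
since $[i\xi,\eta] \in i[\liep,\liek] \subset i\liep$. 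As $[\xi,\beta] \in [\liep,\liep]\subset\liek$, this forces $[\xi,\beta]=0$.
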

This follows applying the previous theorem to the action of $G^\beta$
with the momentum map $\widehat{\mu_{\liu^\beta}} := \mu_{\liu^\beta} -
i\beta$, where $\mu_{\liu^\beta}$ denotes the projection of $\mu$ onto
$\mu_{\liu^\beta}$.
% It turns out that the $i\liep^\beta$-component of $
% \widehat{\mu_{\liu^\beta}} (x)$ vanishes, so one can indeed apply
% the previous theorem to get the corollary.
See \cite[p. 169]{heinzner-schwarz-stoetzel} for more details.

If $\beta \in \liep$, \changed{then $\beta_X$ is a vector field on
  $X$, i.e. a section of $TX$. For $x\in X$, the differential is a map
  $T_xX \ra T_{\beta_X(x)}(TX)$. If $\beta_X(x) =0$, there is a
  canonical splitting $T_{\beta_X(x)}(TX) = T_xX \oplus
  T_xX$. Accordingly $d\beta_X(x)$ splits} into a horizontal and a
vertical part. The horizontal part is the identity map. We denote the
vertical part by $d\beta_X(x)$.  It belongs to $\End(T_xX)$.  Let
$\{\phi_t=\exp(t\beta)\} $ be the flow of $\beta_X$.  \changed{There
  is a corresponding flow on $TX$. Since $\phi_t(x)=x$, the flow on
  $TX$ preserves $T_xX$ and there it is given by $d\phi_t(x) \in
  \Gl(T_xX)$.  Thus we get a linear $\R$-action on $T_xX$ with
  infinitesimal generator $d\beta_X(x) $.}
\begin{cor}
  \label{slice-cor-2}
  If $\beta \in \liep $ and $x \in X$ is a critical point of $\mupb$,
  then there are open invariant neighbourhoods $S \subset T_xX$ and
  $\Omega \subset X$ and an $\R$-equivariant diffeomorphism $\Psi : S
  \ra \Omega$, such that $0\in S, x\in \Omega$, $\Psi ( 0) =x$. \changed{(Here
  $t\in \R$ acts as $d\phi_t(x)$ on $S$ and as $\phi_t$ on $\Omega$.)}
\end{cor}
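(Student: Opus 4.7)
The plan is to derive the statement from Corollary \ref{slice-cor} by linearising locally the slice bundle that it produces. Since $\beta_X$ is the gradient of $\mupb\restr{X}$ and $x$ is a critical point, $\beta_X(x)=0$, so $\phi_t(x)=x$ for all $t\in\R$ and hence $\beta\in\lieg_x$. Writing $\gamma:=\mup(x)$, the standard inclusion $G_x\subset G^\gamma$---already implicit in the statement of Corollary \ref{slice-cor}, since the associated bundle $G^\gamma\times^{G_x}S$ only makes sense when $G_x\subset G^\gamma$---gives $\beta\in\lieg^\gamma$.

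Corollary \ref{slice-cor} then provides a decomposition $T_xX=\lieg^\gamma\cdot x\oplus W$ and a $G^\gamma$-equivariant diffeomorphism $\Psi_0\colon G^\gamma\times^{G_x}S_0\to\Omega_0$ with $\Psi_0([e,0])=x$. Because $\ad\beta$ is diagonalisable on $\lieg$ and the subalgebras $\lieg_x\subset\lieg^\gamma$ are both $\ad\beta$-stable (the former simply because $\beta\in\lieg_x$), I can pick an $\ad\beta$-invariant complement $\lieh$ of $\lieg_x$ in $\lieg^\gamma$. Under the resulting identification $T_xX=(\lieh\cdot x)\oplus W$ the linearised flow $d\phi_t(x)$ acts on $\lieh\cdot x$ as $\Ad(\exp t\beta)$ (preserving $\lieh$ by construction) and on $W$ by the restriction to $\exp t\beta\in G_x$ of the isotropy representation.

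I then define $\Psi(\xi\cdot x+w):=\Psi_0([\exp\xi,w])$ for small $(\xi,w)\in\lieh\oplus W$. Its differential at $0$ is the identity, so $\Psi$ is a local diffeomorphism at $0$. Equivariance follows from
\[
[\exp(t\beta)\exp\xi,w]=[\exp(\Ad(\exp t\beta)\xi)\exp(t\beta),w]=[\exp(\Ad(\exp t\beta)\xi),\,\exp(t\beta)\cdot w]
\]
in $G^\gamma\times^{G_x}S_0$, where the first equality uses $g\exp\xi=\exp(\Ad(g)\xi)\cdot g$ with $g=\exp(t\beta)$ and the second uses $\exp(t\beta)\in G_x$. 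Combining this with the $G^\gamma$-equivariance of $\Psi_0$ yields $\phi_t\circ\Psi=\Psi\circ d\phi_t(x)$. A final shrinking of $S$ to an $\R$-invariant open neighbourhood of $0$ on which $\Psi$ is still a diffeomorphism, together with the choice $\Omega:=\Psi(S)$, concludes the argument.

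The main technical point is the simultaneous use of two compatibilities: one needs $\beta\in\lieg^\gamma$ so that the $\R$-action generated by $\beta$ sits inside the $G^\gamma$-action that Corollary \ref{slice-cor} trivialises, and an $\ad\beta$-invariant complement $\lieh$ so that the exponential chart flattening the bundle $G^\gamma\times^{G_x}S_0$ near $[e,0]$ intertwines the linear $\R$-action on $T_xX$ with the geometric $\R$-action on $\Omega$.
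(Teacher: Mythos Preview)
Your argument is essentially correct, but it takes a considerably longer route than the paper. The paper's proof is one line: since $H := \exp(\R\beta)$ is a compatible subgroup of $U^\C$ (it is closed, connected and $\theta$-invariant, with $\liep_H = \R\beta$), one applies Corollary~\ref{slice-cor} directly to the $H$-action at $x$. Because $x$ is a critical point of $\mupb$, the whole group $H$ fixes $x$, so $H_x = H = H^{\mu_{\liep_H}(x)}$; the associated bundle $H \times^{H} S$ collapses to $S$, and the decomposition $T_xX = \lieh\cdot x \oplus W$ becomes $W = T_xX$ since $\lieh\cdot x = 0$. The $H$-equivariance of the resulting $\Psi : S \to \Omega$ is precisely the desired $\R$-equivariance, and the $H$-invariance of $S$ is already built into the Slice Theorem.

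By contrast, you apply Corollary~\ref{slice-cor} to the full $G^\gamma$-action and then flatten the bundle $G^\gamma \times^{G_x} S_0$ by hand via an exponential chart on an $\ad\beta$-invariant complement $\lieh\subset\lieg^\gamma$. The equivariance computation is fine, but the closing sentence (``a final shrinking of $S$ to an $\R$-invariant open neighbourhood of $0$ on which $\Psi$ is still a diffeomorphism'') is not as innocent as it sounds: the linear $\R$-action on $T_xX$ typically has unbounded orbits (whenever $d\beta_X(x)$ has a nonzero eigenvalue), so one cannot simply intersect with a small ball. You would need to argue via the eigenspace decomposition of $d\beta_X(x)$, or else note that your $\Psi$ is already globally defined and $\R$-equivariant on the $\R$-invariant set $(\lieh\cdot x)\oplus S_0$ and then control the injectivity of $\exp:\lieh\to G^\gamma/G_x$ there. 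The paper's approach sidesteps all of this by letting the Slice Theorem for $H$ produce the invariant $S$ directly.
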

\begin{proof}
  The subgroup $H:=\exp(\R \beta)$ is compatible.  It is enough to
  apply the previous corollary to the $H$-action at $x$.
\end{proof}

Assume now that $\beta \in \liep$ and that $x \in
\Crit(\mu^\beta_\liep)$. Let $D^2\mup^\beta(x) $ denote the Hessian,
which is a symmetric operator on $T_xX$ such that
\begin{gather*}
  ( D^2 \mup^\beta(x) v, v) = \frac{\mathrm d^2}{\mathrm
    dt^2} %\bigg \vert
  (\mup^\beta\circ \ga)(0)
  % \frac{\mathrm d^2}{\mathrm dt^2} \bigg \vert _{t=0}
  % \mup^\beta(\alfa(t)
\end{gather*}
where $\ga$ is a smooth curve, $\ga(0) = x$ and $ \dot{\ga}(0)=v$.
Denote by $V_-$ (respectively $V_+$) the sum of the eigenspaces of the
Hessian of $\mupb$ corresponding to negative (resp. positive)
eigenvalues. Denote by $V_0$ the kernel.  Since the Hessian is
symmetric we get an orthogonal decomposition
\begin{gather}
  \label{Dec-tangente}
  T_xX = V_- \oplus V_0 \oplus V_+.
\end{gather}
Let $\alfa : G \ra X$ be the orbit map: $\alfa(g) :=gx$.  The
differential $d\alfa_e$ is the map $\xi \mapsto \xi_X(x)$.
% \textcolor{Red}{If $\beta \in \lieg$ and $\beta_X(x) =0$, the
% differential of $\beta_X$ viewed as a section of the tangent bundle
% splits canonically into a horizontal and a vertical part. The
% horizontal part is the identity map. We denote the vertical part by
% $d\beta_X(x)$.  It belongs to $\End(T_xX)$. Let $\{\phi_t\} $ be the
% flow of $\beta_X$. Then $\phi_t(x)=x$ and $d\phi_t(x) \in
% \Gl(T_xX)$. Thanks to Schwarz theorem on mixed derivatives
% $d\beta_X(x) $ is nothing but the infinitesimal generator of the
% 1-parameter subgroup $\{d\phi_t(x)\} \subset \Gl(T_xX)$.}
\begin{prop}
  \label{tangent}
  If $\beta \in \liep$ and $x \in \Crit(\mu^\beta_\liep)$ then
  \begin{gather*}
    D^2\mup^\beta(x) = d \beta_X(x).
  \end{gather*}
  Moreover $d\alfa_e (\lier^{\beta\pm} ) \subset V_\pm$ and $d\alfa_e(
  \lieg^\beta) \subset V_0$.  If $X$ is $G$-homogeneous these are
  equalities.
\end{prop}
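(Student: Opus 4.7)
The plan is to first establish the Hessian formula $D^2\mup^\beta(x)=d\beta_X(x)$ by a purely Riemannian argument, and then to diagonalize both sides simultaneously by exploiting the flow generated by $\beta$.

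\textbf{Step 1: the Hessian identity.} Since $\metrica$ is the \Keler metric associated to $\om$, the vector field $\beta_X$ is the gradient of $\mup^\beta\restr X$ on the invariant submanifold $X$. At a critical point $x$ of $\mup^\beta$ we have $\beta_X(x)=0$, so the standard Riemannian identification applies: given any smooth curve $\gamma$ with $\gamma(0)=x$ and $\dot\gamma(0)=v$, and any local vector field $V$ extending $v$,
\begin{gather*}
  (D^2\mup^\beta(x)\,v,v)=\frac{d^2}{dt^2}\bigg|_{t=0}\mup^\beta\circ\gamma(0)=\frac{d}{dt}\bigg|_{t=0}(\beta_X(\gamma(t)),V)=(d\beta_X(x)\,v,v),
\end{gather*}
where in the last equality only the vertical part of the differential contributes because $\beta_X(x)=0$. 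Polarizing gives $D^2\mup^\beta(x)=d\beta_X(x)$ as symmetric operators on $T_xX$.

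\textbf{Step 2: action of $d\beta_X(x)$ on $d\alfa_e(\xi)$.} Let $\phi_t=\exp(t\beta)$ be the flow of $\beta_X$ on $X$. Since $\beta_X(x)=0$, the point $x$ is fixed by $\phi_t$, and by Corollary \ref{slice-cor-2} the linearized flow $d\phi_t(x)\in\Gl(T_xX)$ has $d\beta_X(x)$ as infinitesimal generator. For any $\xi\in\lieg$ and small $s$,
\begin{gather*}
  \phi_t(\exp(s\xi)\,x)=\exp(t\beta)\exp(s\xi)\exp(-t\beta)\exp(t\beta)\,x=\exp\bigl(s\,e^{t\ad\beta}\xi\bigr)x.
\end{gather*}
Differentiating in $s$ at $s=0$ yields $d\phi_t(x)\cdot d\alfa_e(\xi)=d\alfa_e(e^{t\ad\beta}\xi)$, and differentiating in $t$ at $t=0$ gives the key identity
\begin{gather*}
  d\beta_X(x)\cdot d\alfa_e(\xi)=d\alfa_e([\beta,\xi]).
\end{gather*}

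\textbf{Step 3: spectral inclusions.} Recall that $\ad\beta$ is diagonalizable over $\R$, with $\lier^{\beta+}=\bigoplus_{\la>0}V_\la(\ad\beta)$, $\lier^{\beta-}=\bigoplus_{\la<0}V_\la(\ad\beta)$ and $\lieg^\beta=V_0(\ad\beta)$. If $\xi\in V_\la(\ad\beta)$ then the identity of Step 2 gives $d\beta_X(x)\cdot d\alfa_e(\xi)=\la\,d\alfa_e(\xi)$, so $d\alfa_e(\xi)$ is an eigenvector of $D^2\mup^\beta(x)$ with eigenvalue $\la$. This proves $d\alfa_e(\lier^{\beta\pm})\subset V_\pm$ and $d\alfa_e(\lieg^\beta)\subset V_0$.

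\textbf{Step 4: the homogeneous case.} If $X$ is $G$-homogeneous then $d\alfa_e:\lieg\to T_xX$ is surjective. Using the decomposition $\lieg=\lier^{\beta-}\oplus\lieg^\beta\oplus\lier^{\beta+}$ we obtain
\begin{gather*}
  T_xX=d\alfa_e(\lier^{\beta-})+d\alfa_e(\lieg^\beta)+d\alfa_e(\lier^{\beta+})\subset V_-\oplus V_0\oplus V_+=T_xX,
\end{gather*}
and since each summand on the left lies in the corresponding direct summand on the right, the three inclusions must be equalities.

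The only delicate point is Step 1, where one must be careful that the Riemannian identification of the Hessian with the intrinsic derivative of the gradient is being used intrinsically on $X$ (not on $Z$), justified because $X$ is $G$-invariant and $\beta_X$ is tangent to $X$. Everything afterwards reduces to the algebraic identity $d\beta_X(x)\circ d\alfa_e=d\alfa_e\circ\ad\beta$, which follows by a direct computation with the flow.
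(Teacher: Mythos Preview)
Your proof is correct and follows essentially the same approach as the paper's: the key identity $d\beta_X(x)\circ d\alfa_e = d\alfa_e\circ\ad\beta$ and the resulting spectral inclusions are exactly what the paper establishes via the equivariance of the orbit map, and your Step 4 coincides verbatim with the paper's argument for the homogeneous case. The only difference is that the paper cites an external reference for the Hessian identity $D^2\mup^\beta(x)=d\beta_X(x)$, whereas you supply the standard Riemannian argument directly (with a minor notational slip in the displayed chain of equalities, where $\dot\gamma(t)$ should appear in place of the extension $V$, though the conclusion is unaffected since $\beta_X(x)=0$).
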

\begin{proof}
  The first statement is proved in
  \cite[Prop. 2.5]{heinzner-schwarz-stoetzel}.  Denote by $\rho : G_x
  \ra T_xX$ the isotropy representation: $\rho(g) = dg_x$.  Observe
  that $\alfa$ is $G_x$-equivariant where $G_x$ acts on $G$ by
  conjugation, hence $d\alfa_e$ is $G_x$-equivariant, where $G_x$ acts
  on $\lieg$ by the adjoint representation and on $T_xX$ by the
  isotropy representation.  Since $\beta_X(x)=0$, $\exp({t\beta }) \in
  G_x$ for any $t$ and $d\alfa_e$ is $\R$-equivariant. Therefore it
  interchanges the infinitesimal generators of the $\R$-actions,
  i.e. $d\alfa_e \circ \ad \beta = d\beta_X = D^2\mupb(x)$.  The
  required inclusions follow. If $G$ acts transitively on $X$ we must
  have $T_xX = d\alfa_e(\lieg)$. Hence the three inclusions must be
  equalities.
\end{proof}

\begin{cor}
  \label{MorseBott}
  For every $\beta \in \liep$, $\mupb$ is a Morse-Bott function.
\end{cor}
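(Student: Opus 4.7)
The plan is to verify the two defining conditions of a Morse-Bott function: that $\Crit(\mupb)$ is a disjoint union of smooth submanifolds of $X$, and that at each critical point $x$ the kernel of the Hessian $D^2\mupb(x)$ coincides with the tangent space $T_x\Crit(\mupb)$. Both conditions will be checked locally, and the key input is the linearisation provided by Corollary \ref{slice-cor-2}.

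Fix $x \in \Crit(\mupb)$. Since $\beta_X$ is the gradient of $\mupb$ with respect to the induced Riemannian metric on $X$, critical points of $\mupb$ are exactly the zeros of $\beta_X$, equivalently the fixed points of the flow $\phi_t = \exp(t\beta)$. Apply Corollary \ref{slice-cor-2} to obtain an $\R$-equivariant diffeomorphism $\Psi : S \to \Omega$ from an invariant open neighbourhood $S$ of $0$ in $T_xX$ onto an invariant open neighbourhood $\Omega$ of $x$ in $X$, where $t\in\R$ acts linearly on $T_xX$ as $d\phi_t(x) = \exp(t\, d\beta_X(x))$. Under $\Psi$, fixed points of the flow on $\Omega$ correspond to fixed points of the linear $\R$-action on $S$, and these are precisely the points of $\ker d\beta_X(x) \cap S$. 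Writing the decomposition \eqref{Dec-tangente}, this kernel is exactly $V_0$. Thus
\begin{gather*}
  \Crit(\mupb) \cap \Omega = \Psi(V_0 \cap S),
\end{gather*}
which is a smooth submanifold of $X$ of dimension $\dim V_0$, with tangent space at $x$ equal to $V_0$.

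It remains to identify $V_0$ with the kernel of the Hessian. By Proposition \ref{tangent}, $D^2\mupb(x) = d\beta_X(x)$, and by definition $V_0$ is the kernel of this operator (the zero eigenspace of the symmetric Hessian). Therefore $T_x\Crit(\mupb) = V_0 = \ker D^2\mupb(x)$, which is the Morse-Bott condition at $x$. Since $x$ was an arbitrary critical point, $\Crit(\mupb)$ is everywhere locally a submanifold of the correct dimension and the Hessian is non-degenerate on a complement, so $\mupb$ is a Morse-Bott function.

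The only mildly subtle point is verifying that the linearisation in Corollary \ref{slice-cor-2} really does identify the local fixed-point set with $\ker d\beta_X(x)$; but this is immediate because $d\phi_t(x) = \exp(t\, d\beta_X(x))$ is a one-parameter group of linear automorphisms of $T_xX$, whose fixed point set is exactly the kernel of its infinitesimal generator. No global argument or compactness assumption on $X$ is needed.
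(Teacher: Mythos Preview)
Your proof is correct and follows essentially the same approach as the paper's: use Corollary~\ref{slice-cor-2} to linearise the flow near a critical point, identify the local fixed-point set with $V_0\cap S$ so that $\Crit(\mupb)$ is a smooth submanifold with tangent space $V_0$, and then invoke Proposition~\ref{tangent} to see that the Hessian is nondegenerate in the normal directions. The paper's version is simply more terse.
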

\begin{proof}
  Let $X^\beta:=\{ x\in X: \beta_X(x) =0\}$.  Corollary
  \ref{slice-cor-2} implies that $X^\beta$ is \changed{a smooth
    submanifold}. Since $T_xX^\beta = V_0$ for $x\in X^\beta$, the
  first statement of Proposition \ref{tangent} shows that the Hessian
  is nondegenerate in the normal directions.
\end{proof}

\subsection{Coadjoint orbits}
\label{coadjoint-orbits}

Let $U$ be a compact connected semisimple Lie group.  Fix a scalar
product $\sx \ ,\ \xs$ on $\liu$ and identify $\liu^* \cong \liu$.
Let $z \in \liu$ and let $Z:=U\cd z$ (adjoint action).
% The operator $\ad z : \liu \ra \liu$ is skew-symmetric and $T_zZ = \im
% \ad z$.
$Z$ is a (co)adjoint, hence it is provided with the
Kostant-Kirillov-Souriau symplectic form which is defined by
\begin{gather*}
  \om_z ( v_Z, w_Z) := \sx x, [v,w]\xs \qquad v, w \in \liek.
\end{gather*}
(See e.g. \cite[p. 5] {kirillov-lectures}.)  The inclusion $ Z
\hookrightarrow \liu$ is the momentum map for the $U$-action on $Z$.
Set $Q:=(U^\C)^{z+}$. Then $Q$ is a parabolic subgroup of $U^\C$ and
$T_zZ \cong \liu^\C / \lieq$. This endows $Z$ with an invariant
complex structure $J$ such that $\om$ is an invariant K\"ahler
form. Such a structure is in fact unique.  The action of $U$ on $Z$
extends to a holomorphic action of $U^\C$.

To study $K$-orbits on $\liep$ it is convenient to identify $ \liep$
with $ i\liep$ by multiplying by $i$.  A $K$-orbit $\OO=K\cd x \subset
\liep$ is mapped to $K \cd ix \subset Z:= U\cd ix$. Since $G \subset
U^\C$, $G$ acts on $Z$ and we have $G \cd ix = K\cd ix$, see
\cite[Lemma 5]{heinzner-stoetzel-global} for the case $G^\C =U^\C$ and
\cite[Prop. 6]{heinzner-stoetzel} for the general case.  Therefore the
data $G, K, U, Z, X$ are like in the previous setting.  And
identifying $\OO \cong K\cd ix$, the gradient momentum becomes the
inclusion $\OO \subset \liep$.

\section{Face structure}
  \subsection{Faces as orbitopes}
  \label{faces1}
  Let $U $ be a compact Lie group and let $G \subset U^\C$ be a
  compatible connected subgroup.
  \begin{defin}
    An \enf{orbitope} of $G$ is the convex envelope of a $K$-orbit in
    $\liep$.  If $\OO \subset \liep$ is the $K$-orbit in $\liep$, $\c$
    denotes the corresponding orbitope.
  \end{defin}

  \begin{lemma} \label{extO=O} We have $\ext \c = \OO$ and $\ext F = F
    \cap \OO$ for any face $F$ of $ \c$.
  \end{lemma}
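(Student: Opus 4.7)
The plan is to prove the two statements in sequence, with the second following almost for free from the first via the previously cited Lemma \ref{ext-facce}.

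First I would establish $\ext\c = \OO$. The inclusion $\ext\c \subseteq \OO$ is the standard fact that for a compact set $A$, the extreme points of $\operatorname{conv}(A)$ are contained in $A$; here $\OO$ is compact because it is the continuous image of the compact group $K$. For the reverse inclusion I would exploit the $K$-invariance of the scalar product: every point of $\OO$ has the same norm $r := |x|$, so $\OO$ is contained in the sphere of radius $r$, and consequently $\c$ is contained in the closed ball of radius $r$. Since a point of a convex set that lies on the boundary of a strictly convex body containing it must be extreme in that body (and hence in any subset), each $x\in\OO$ is extreme in $\c$. Concretely, if $x = \tfrac{1}{2}(y+z)$ with $y,z\in\c$, then $|y|,|z|\le r$ forces $y=z=x$ by strict convexity of $|\cdot|^2$.

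Second, for any face $F$ of $\c$, Lemma \ref{ext-facce} gives $\ext F = F \cap \ext\c$, and combining with the first part yields $\ext F = F\cap\OO$.

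There is no real obstacle; the only point requiring a small argument is the equality $\OO = \ext\c$, and the key input is the $K$-invariance of $\scalo$, which forces $\OO$ to lie on a sphere centered at the origin. Everything else is convex geometry recalled in \S\ref{pre-convex}.
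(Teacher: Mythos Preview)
Your proposal is correct and follows exactly the standard argument the paper has in mind: the paper's own proof simply cites references (\cite{sanyal-sottile-sturmfels-orbitopes} and \cite{biliotti-ghigi-heinzner-1-preprint}) for the fact that $\ext\c=\OO$ holds for any orbitope, and the argument there is precisely the sphere argument you give, followed by Lemma~\ref{ext-facce}. Indeed, later in the paper (proof of the corollary after Theorem~\ref{tutte-esposte}) the authors explicitly invoke ``the fact that $\OO$ is contained in a sphere'' as the mechanism behind Lemma~\ref{extO=O}.
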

  \begin{proof}
    This fact is common to all orbitopes, see \cite
    [Prop. 2.2]{sanyal-sottile-sturmfels-orbitopes} or \cite[Lemma
    14]{biliotti-ghigi-heinzner-1-preprint}.
  \end{proof}

  We start the analysis of the structure of the faces of $\c$ by
  considering the exposed faces. At the end of \S \ref{faces-2} we
  will prove that in fact all faces of $\c$ are exposed.  Let $\beta$
  be a nonzero vector in $\liep$. Since $\mup$ is the inclusion $\OO
  \hookrightarrow \liep$, the function $\mup^\beta $ is $\mup^\beta
  (x) : = \sx x, \beta \xs$.  Set
  \begin{gather*}
    \ml(\beta) : =\{ x \in \OO: \mup^\beta( x) = \max_\OO \mup^\beta
    \}.
  \end{gather*}
  The main result about this set is the following.
  \begin{prop}\label{massimo-connesso}
    The set $\ml(\beta)$ is a connected $K^\beta$-orbit. In particular
    it is a $(K^\beta)^0$-orbit.
  \end{prop}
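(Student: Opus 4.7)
The plan is to combine the Morse--Bott property of $\mup^\beta$ (Corollary~\ref{MorseBott}) with Proposition~\ref{tangent} to identify each connected component of the critical set $\OO^\beta$ with a $(K^\beta)^0$-orbit, and then to invoke Kostant's convexity theorem to show $\ml(\beta)$ is connected. Since $\mup^\beta$ is Morse--Bott, its critical set $\OO^\beta = \{y \in \OO : \beta_\OO(y)=0\}$ is a smooth submanifold on each component of which $\mup^\beta$ is constant; hence $\ml(\beta)$ is a union of connected components of $\OO^\beta$. For $x \in \OO^\beta$, Proposition~\ref{tangent} (with equality, since $\OO$ is $G$-homogeneous) gives $V_0 = \lieg^\beta \cdot x$. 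Since $G^\beta$ is compatible by Lemma~\ref{lemcomp}\ref{lemcomp3}, the extension principle of Section~\ref{coadjoint-orbits} applied to $G^\beta$ yields $G^\beta \cdot x = K^\beta \cdot x$, so $T_x\bigl((K^\beta)^0 \cdot x\bigr) = \lieg^\beta \cdot x = V_0 = T_x \OO^\beta$. The orbit $(K^\beta)^0 \cdot x$ is a closed connected submanifold of $\OO^\beta$ sharing tangent space and dimension at $x$ with the component of $\OO^\beta$ through $x$, so the two coincide. Thus every connected component of $\OO^\beta$ is a $(K^\beta)^0$-orbit, and $\ml(\beta)$ is a (possibly disconnected) union of such orbits.

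To prove $\ml(\beta)$ is connected, I would choose a maximal abelian $\lia \subset \liep$ containing $\beta$ and let $\pi : \liep \to \lia$ be orthogonal projection. Since $\beta \in \lia$, one has $\mup^\beta(y) = \sx \pi(y), \beta \xs$, so $\ml(\beta) = (\pi|_\OO)^{-1}(F)$, where $F$ is the face of $P := \pi(\OO)$ on which $\sx \cdot, \beta \xs$ is maximized. By Kostant's convexity theorem $P$ is a convex polytope, and in Kostant's original formulation the fibers of $\pi|_\OO$ are also connected. Granting this, $\pi|_\OO$ is a closed continuous surjection with connected fibers (closed because $\OO$ is compact), and its restriction $\ml(\beta) \to F$ inherits these properties. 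Since $F$ is connected, a standard topological argument (a closed surjection with connected fibers onto a connected base has connected total space) then yields that $\ml(\beta)$ is connected. Hence $\ml(\beta)$ is a single connected component of $\OO^\beta$, i.e., a $(K^\beta)^0$-orbit, and a fortiori a $K^\beta$-orbit.

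The main obstacle is the connected-fibers refinement of Kostant's theorem: only the convexity statement is explicitly recalled in the introduction, and the stronger form would have to be cited or proved separately. If one prefers to avoid it, an alternative route is a direct Morse--Bott argument showing that the union of stable manifolds of the non-maximal critical components of $\mup^\beta$ has real codimension at least two in $\OO$, so that its complement (the union of stable manifolds of the maxima) is connected and hence can contain only one maximum component. In the \keler setting this is automatic from the even-index property, and in our real setting one might exploit the Cartan-involution symmetry $\theta(\lier^{\beta+}) = \lier^{\beta-}$ together with the equalities $V_\pm = \lier^{\beta\pm} \cdot x$ from Proposition~\ref{tangent}; however, the dimension bookkeeping is less transparent than the Kostant-based route.
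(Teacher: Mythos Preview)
There is a genuine gap: the fibers of $\pi|_\OO : \OO \to P$ are \emph{not} connected in general, so the step ``closed surjection with connected fibers onto a connected base has connected total space'' fails at its premise. You seem to be importing the connected-fibers clause from the Atiyah/Guillemin--Sternberg theorem for Hamiltonian torus actions, but $\pi|_\OO$ is not a symplectic momentum map in the present real setting, and Kostant's theorem asserts only the convexity of the image. Already for $G = \Sl(2,\R)$ the orbit $\OO \subset \liep \cong \R^2$ is a circle and $\pi$ is orthogonal projection onto a line, so the generic fiber is two points. The same happens over the relative interior of positive-dimensional faces in higher rank: for $G=\Sl(3,\R)$, $x = \diag(2,-1,-1)$, and the edge $\sigma \subset P$ with third coordinate $-1$, the fiber of $\pi|_\OO$ over the midpoint of $\sigma$ again consists of two matrices. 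Your fallback via codimension-two stable manifolds does not rescue the argument either: the Morse indices of $\mup^\beta$ on $\OO$ need not be even (index $1$ at the minimum in the $\Sl(2,\R)$ example), and while $\theta$ interchanges $\lier^{\beta+}$ and $\lier^{\beta-}$, it does not preserve the isotropy $\lieg_x$ (since $\theta x = -x$), so it yields no relation between $\dim V_+$ and $\dim V_-$ at a fixed critical point.

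The paper's proof is entirely different and combinatorial. An explicit Hessian computation in restricted-root coordinates (Proposition~\ref{hesso-2}) shows that $x \in \OO \cap \lia$ is a local maximum of $\mup^\beta$ if and only if $x$ and $\beta$ lie in the closure of a common Weyl chamber. A chamber lemma (Lemma~\ref{w-tilde}) then shows that any two such local maxima in $\lia$ differ by an element of the stabilizer $W^\beta \subset W$, which can be realized inside $(K^\beta)^0$ by Proposition~\ref{ss-works}. Since every critical point is $(K^\beta)^0$-conjugate into $\lia$ (Lemma~\ref{critical-set}), the set of all local maxima collapses to a single $(K^\beta)^0$-orbit, and connectedness comes for free. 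Your first step---identifying each component of $\OO^\beta$ with a $(K^\beta)^0$-orbit via the extension principle for the compatible subgroup $G^\beta$---is correct and cleanly argued, but the real content lies in the connectedness of $\ml(\beta)$, and for that the Weyl-chamber analysis (or something of comparable strength) is genuinely required.
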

  This theorem goes back to \cite{duistermaat-kolk-varadarajan,
    heckman-thesis}. Since it is basic we repeat the proof in our
  context.  If $\liea \subset \liep$ is a maximal subalgebra, we
  denote by $W=W(\liek,\lia)$ the Weyl group of $\lia$ in $K$.

  \begin{lemma}
    \label{Heckman-roots}
    Let $\lieg$ be a real semisimple Lie algebra with Cartan
    decomposition $\lieg= \liek \oplus \liep$ and let $\lia \subset
    \liep$ be a maximal subalgebra. If $x, y \in \lia$ then there is a
    Weyl chamber $C$ such that $\cchamber$ contains both $x$ and $y$
    if and only if $\la (x) \la (y) \geq 0$ for every restricted root
    $\la$.
  \end{lemma}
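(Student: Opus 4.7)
The forward implication is immediate: a closed Weyl chamber $\cchamber$ in $\lia$ is, by definition, cut out by inequalities $\la \geq 0$ for $\la$ ranging over the positive roots $\roots_+$ with respect to some base $\simple \subset \roots(\lieg,\lia)$. If $x,y \in \cchamber$ then $\la(x), \la(y) \geq 0$ for $\la \in \roots_+$, and $\la(x), \la(y) \leq 0$ for $\la \in -\roots_+$, so $\la(x)\la(y) \geq 0$ for every restricted root.

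For the converse, the plan is to produce a single regular element $v \in \lia$ whose chamber has closure containing both $x$ and $y$. Fix $w \in \lia$ regular, i.e.\ $\mu(w) \neq 0$ for every $\mu \in \roots(\lieg,\lia)$, and consider
\[
v := x + \eps_1 y + \eps_2 w,
\]
with $0 < \eps_2 \ll \eps_1 \ll 1$. Since $\roots(\lieg,\lia)$ is finite, the $\eps_i$ can be chosen uniformly so that for every root $\la$ the sign of
$\la(v) = \la(x) + \eps_1 \la(y) + \eps_2 \la(w)$
is governed by the first nonzero term in the list $\la(x), \la(y), \la(w)$. In particular $\la(v) \neq 0$ for all $\la$, so $v$ is regular, and lies in a unique chamber $\chamber$ whose defining positive system is $\roots_+ := \{\la \in \roots : \la(v) > 0\}$.

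To finish I verify that $\la \in \roots_+$ forces $\la(x), \la(y) \geq 0$. If $\la(x) \neq 0$, the hierarchical perturbation forces the sign of $\la(v)$ to equal that of $\la(x)$, so $\la(x) > 0$; the hypothesis $\la(x)\la(y) \geq 0$ then gives $\la(y) \geq 0$. If $\la(x)=0$ and $\la(y) \neq 0$, the sign of $\la(v)$ equals that of $\la(y)$, so $\la(y) > 0$ and $\la(x) = 0$. If $\la(x) = \la(y) = 0$ both are trivially nonnegative. In all cases $x,y \in \cchamber$. There is no real obstacle: the only point requiring a moment's care is that the two perturbation parameters can be shrunk simultaneously to work for every root, which is automatic from finiteness of $\roots(\lieg,\lia)$.
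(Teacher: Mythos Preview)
Your proof is correct. The forward direction is identical to the paper's, and your converse argument via a lexicographically perturbed regular element $v = x + \eps_1 y + \eps_2 w$ is sound: finiteness of the root system lets you choose $\eps_1, \eps_2$ so that the sign of each $\la(v)$ is dictated by the first nonvanishing term among $\la(x), \la(y), \la(w)$, and the case analysis then goes through cleanly.

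The paper's argument for the converse is shorter and avoids perturbation entirely. It simply takes the midpoint $z = (x+y)/2$, picks any Weyl chamber $C$ with $z \in \cchamber$, and checks directly that $x, y \in \cchamber$: if $\la(z) > 0$ then the hypothesis $\la(x)\la(y) \geq 0$ together with $\la(x) + \la(y) > 0$ forces $\la(x), \la(y) \geq 0$; if $\la(z) = 0$ then $\la(x) = -\la(y)$ and the hypothesis forces both to vanish. Your approach buys you a regular element in the chamber (so the chamber is uniquely determined by $v$), at the price of two perturbation parameters and a bit more bookkeeping; the paper's midpoint trick is slicker but yields only a point in the closure, which is all that is needed.
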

  \begin{proof}
    [Proof (see \protect {\cite [p. 11] {heckman-thesis}})] A Weyl
    chamber is a connected component of the set where all roots are
    nonzero. Given such a component $C$, let $\roots_+$ be the set of
    roots that are positive on $C$. Then $\roots=\roots_+ \sqcup
    (-\roots_+)$. From this follows the ``only if'' part. To prove the
    ``if'' part \changed{we can assume that $x$ and $y$ are
      different. Let $z:= (x + y) /2 $ and let $C$ be a Weyl chamber
      with $z\in \cchamber$. By assumption, no root changes its sign
      on the segment $[x,y]$. Therefore $\la (z) >0$ implies that
      $\la(x) \geq 0$ and $\la(y) \geq 0$. If $\la(z) =0$, then
      $\la(x) = \la(y) = 0$. Therefore $x$ and $y$ belong to
      $\cchamber$. We thank the referee for pointing out this short
      argument.}
%
%
%
%  we can assume that $\roots(\lieg, \lia)$ be reduced
%     (if not, we argue with the associated reduced root system).
% Choose a set of positive roots $\roots_+$ such that $\la (x) \geq
%     0$ for every $\la \in \roots_+$. We proceed by induction on $n:=\#
%     \{\la \in \roots_+: \la(y) <0\}$.  If $n=0$, we can take $C$ to be
%     the Weyl chamber associated to $\roots_+$. If not there is a
%     simple root $\la_0 \in \roots_+$ such that $\la_0 (y) < 0$ and
%     $\la_0(x) =0$.  If $\sigma$ is the reflection associated to
%     $\la_0$, then $\sigma(C)$ is the Weyl chamber associated to
%     $\roots_+':= \{\la \circ \sigma : \la \in \roots_+\} =
%     \{\la_0\circ \sigma\} \sqcup \bigl ( \roots_+ \setminus \{\la_0\}
%     \bigr )$ (see e.g.  \cite[p. 50]{humphreys-algebras}).  Since
%     $\sigma (x)=x$, $\mu(x) \geq 0$ for every $\mu\in \roots'_+$,
%     while $\la_0\circ \sigma (y) >0$, $\# \{\mu\in \roots_+': \mu(y) <
%     0\} = n-1$.  By the inductive hypothesis there is $\tau \in \Weyl$
%     such that $x, y \in \overline{\tau\sigma(C)}$.
  \end{proof}

  \begin{lemma}
    \label{w-tilde}
    Let $\chamber \subset \lia$ be a Weyl chamber and let $x, y \in
    \cchamber$. If $x' \in \Weyl\cd x$, then there is a Weyl chamber
    $C'$ such that $ x' , y \in \overline{C'}$ if and only if there is
    $w \in \Weyl$ such that $ w \cdot x= x'$ and $w \cdot y=y$.
  \end{lemma}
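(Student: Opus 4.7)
The plan is to reduce everything to the classical fact that the closed Weyl chamber $\cchamber$ is a fundamental domain for the $W$-action on $\lia$, meaning each $W$-orbit meets $\cchamber$ in exactly one point. Equivalently, since $W$ permutes Weyl chambers simply transitively, the closure of any Weyl chamber is also a fundamental domain.

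For the ``if'' direction, given $w\in W$ with $w\cd x=x'$ and $w\cd y=y$, I would simply set $C':= w\cd \chamber$. This is a Weyl chamber, and since $x,y\in \cchamber$, we get $x'=w\cd x\in \overline{C'}$ and $y=w\cd y\in \overline{C'}$.

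For the ``only if'' direction, I would use simple transitivity of $W$ on Weyl chambers to pick the unique $w\in W$ with $w\cd \chamber = C'$, so that $w\cd \cchamber = \overline{C'}$. Applying this $w$ to the hypothesis $x,y\in \cchamber$ gives $w\cd x, w\cd y\in \overline{C'}$. On the other hand, by hypothesis $x'\in \overline{C'}$ and $y\in \overline{C'}$. Now both $w\cd x$ and $x'$ belong to the intersection $\overline{C'}\cap (W\cd x)$, and similarly $w\cd y$ and $y$ both belong to $\overline{C'}\cap(W\cd y)$. Since $\overline{C'}$ is a fundamental domain, each of these intersections is a single point, so $w\cd x=x'$ and $w\cd y=y$, as desired.

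There is essentially no obstacle: the entire argument is a direct application of the fundamental domain property of closed Weyl chambers, once one uses simple transitivity of $W$ on Weyl chambers to produce the candidate $w$. The only subtle point worth mentioning explicitly is that the same $w$ works for both $x$ and $y$ simultaneously, which is exactly what the uniform choice ``$w$ such that $w\cd \chamber = C'$'' accomplishes.
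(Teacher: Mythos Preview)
Your proof is correct and essentially the same as the paper's. The only cosmetic difference is that the paper applies $w^{-1}$ to transport $x'$ and $y$ back into $\cchamber$ before invoking the fundamental domain property (citing \cite{humphreys-algebras}), whereas you work directly in $\overline{C'}$; the underlying idea is identical.
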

  \begin{proof}
    The ``if'' part follows from the definition of a Weyl chamber.
    Assume the existence of a Weyl chamber $C'$ such that $ x' , y \in
    \overline{C'}$.  Then $x'=\sigma x$ for some $\sigma\in W$.  Let
    $w \in W$ be such that $w( C)=C'$. The points $w\meno x' = w\meno
    \sigma x \in $ and $x$ belong to $\overline{C}$ and to the same
    Weyl orbit. Hence $w\meno x' = w\meno \sigma x = x$
    \cite[p. 52]{humphreys-algebras}, i.e. $x'=w x$. Also $w\meno y $
    and $y $ belong to $\overline{C}$. Hence also $wy=y$.  This
    concludes the proof.
  \end{proof}

\begin{prop}\label{ss-works}
  Let $G$ be a real connected semisimple Lie group. Let $\beta \in
  \liep$.
  \begin{enumerate}
  \item If $\lia \subset \liep^\beta$ is a maximal subalgebra, then
    \begin{gather*}
      \liep^\beta = \bigcup_{k\in (K^\beta)^0} \Ad (k) \lia.
    \end{gather*}
  \item Let $W^\beta:= \{w\in W: w \beta = \beta \}$. Then for any
    $w\in W^\beta$ there is a $k\in (K^\beta)^0$ such that $\Ad(k)
    \lia = \lia$ and $\Ad (k) x = w \cd x $ for every $x\in \lia$.
  \end{enumerate}
\end{prop}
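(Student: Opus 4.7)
For (a), I would observe that $\lieg^\beta = \liek^\beta \oplus \liep^\beta$ is the Cartan decomposition of the reductive Lie algebra $\lieg^\beta$. By Lemma~\ref{lemcomp}~\ref{lemcomp3}, $G^\beta$ is compatible, hence so is its identity component $(G^\beta)^0$, with maximal compact subgroup $(K^\beta)^0 = K^\beta \cap (G^\beta)^0$ (the latter equality uses uniqueness of the decomposition $g = k\exp\xi$). Part~(a) is then the classical fact that every element of $\liep^\beta$ is $\Ad((K^\beta)^0)$-conjugate into $\lia$. I would verify this by a standard critical-point argument: given $x \in \liep^\beta$ and a regular $H_0 \in \lia$, minimize $k \mapsto \|\Ad(k)x - H_0\|^2$ over the compact group $(K^\beta)^0$; at a minimum $k_0$ one finds $[\Ad(k_0)x, H_0] = 0$, and regularity of $H_0$ combined with maximality of $\lia$ in $\liep^\beta$ forces $\Ad(k_0)x \in \lia$.

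For (b), I would first note that $\lia$ is in fact maximal abelian in $\liep$: since every element of $\lia$ commutes with $\beta$, the space $\lia + \R\beta$ is abelian and contained in $\liep^\beta$, so maximality forces $\beta \in \lia$; any further abelian extension $\lia' \supset \lia$ inside $\liep$ must then centralize $\beta$, hence lie in $\liep^\beta$, forcing $\lia'=\lia$. Thus $W = W(\liek, \lia)$ is the usual restricted Weyl group, $\beta \in \lia$, and $W$ acts on $\lia$ as a finite Coxeter reflection group. The strategy has two steps: first, express $w \in W^\beta$ as a product of reflections $s_\alpha$ in restricted roots $\alpha \in \roots(\lieg, \lia)$ with $\alpha(\beta)=0$; second, realize each such $s_\alpha$ as $\Ad(k_\alpha)$ for an explicit $k_\alpha \in (K^\beta)^0$.

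For the first step I would invoke the classical theorem that in a finite real reflection group the pointwise stabilizer of any point is generated by the reflections fixing that point, applied to $W$ on $\lia$. For the second step, given $\alpha$ with $\alpha(\beta)=0$, I would choose $X_\alpha \in \lieg_\alpha$ normalized so that $(H_\alpha, X_\alpha, -\theta X_\alpha)$ is an $\mathfrak{sl}_2$-triple with $H_\alpha \in \lia$ the coroot of $\alpha$, and set $Z_\alpha := X_\alpha + \theta X_\alpha \in \liek$. A routine $\mathfrak{sl}_2$ computation on the two-dimensional span of $\{H_\alpha,\, X_\alpha - \theta X_\alpha\}$ then shows that $\Ad(\exp(\tfrac{\pi}{2} Z_\alpha))$ preserves $\lia$, acts as $-1$ on $\R H_\alpha$, and fixes $\ker\alpha \subset \lia$, i.e.\ realizes $s_\alpha$. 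Crucially, $\alpha(\beta)=0$ forces $\lieg_{\pm\alpha}\subset \lieg^\beta$, so $Z_\alpha \in \liek^\beta$ and $k_\alpha := \exp(\tfrac{\pi}{2} Z_\alpha) \in (K^\beta)^0$; composing these for a reflection decomposition of $w$ yields a $k \in (K^\beta)^0$ with $\Ad(k)\lia = \lia$ and $\Ad(k)|_\lia = w$. The main potential obstacle is the reflection-group input in step one, but since $W$ acts on $\lia$ as a finite Coxeter group on a Euclidean space this result applies verbatim, so no genuine difficulty arises.
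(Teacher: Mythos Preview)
Your argument is correct and is essentially the standard one. Note, however, that the paper does not give its own proof of this proposition: it simply refers the reader to Knapp, \emph{Lie groups beyond an introduction}, pp.~378--9, 383, 455--7. What you have written is precisely a reconstruction of the relevant arguments from those passages: the conjugacy of maximal abelian subspaces of $\liep^\beta$ under $(K^\beta)^0$ via a critical-point computation, the identification of $W^\beta$ with the subgroup generated by reflections $s_\alpha$ for $\alpha(\beta)=0$ (Chevalley's lemma on stabilizers in finite reflection groups), and the explicit realization of each such $s_\alpha$ by $\exp(\tfrac{\pi}{2}Z_\alpha)$ with $Z_\alpha=X_\alpha+\theta X_\alpha\in\liek^\beta$. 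Your preliminary observation that a maximal abelian $\lia\subset\liep^\beta$ is automatically maximal abelian in $\liep$ (because $\beta\in\lia$) is the one point that deserves to be made explicit, and you handle it correctly.
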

For a proof see for example \cite[p. 378-9, 383,
455-7]{knapp-beyond}).

\begin{lemma}
  $ \Crit(\mup^\beta) = \OO\cap \liep^\beta$.
\end{lemma}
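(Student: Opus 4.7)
The plan is to observe that this lemma is essentially a one-line computation, once the formula for $\mup$ on $\OO$ is exploited. By the final paragraph of \S\ref{coadjoint-orbits}, on the orbit $\OO\subset\liep$ the gradient momentum map is just the inclusion, so $\mup^\beta(y)=\sx y,\beta\xs$ for $y\in\OO$. A point $x\in\OO$ is therefore critical for $\mup^\beta$ precisely when $d(\mup^\beta)_x$ vanishes on the tangent space $T_x\OO$.

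First I identify the tangent space: since $\OO=K\cd x$ with $K$ acting by the adjoint representation on $\liep$, one has $T_x\OO=[\liek,x]$. The criticality condition then reads
\begin{gather*}
  \sx[\xi,x],\beta\xs=0\quad\text{for every }\xi\in\liek.
\end{gather*}
Using the $\Ad U^\C$-invariance of the bilinear form $B$ introduced on p.~\pageref{def-B}, which agrees with $\scalo$ on $\liep\subset i\liu$ and with $-\scalo$ on $\liek\subset\liu$, this quantity equals $\pm\sx\xi,[x,\beta]\xs$ (the sign being irrelevant for what follows). Thus $x$ is critical if and only if $[x,\beta]$ is $\scalo$-orthogonal to every $\xi\in\liek$.

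To finish, note that $x,\beta\in\liep$ and $[\liep,\liep]\subset\liek$ by the Cartan decomposition, so $[x,\beta]$ lies in $\liek$. Since $\scalo$ is positive definite on $\liek\subset\liu$, orthogonality to all of $\liek$ forces $[x,\beta]=0$, which is exactly the condition $x\in\liep^\beta$. The reverse implication is the same calculation run backwards: if $[x,\beta]=0$ then $\sx[\xi,x],\beta\xs=0$ for every $\xi\in\liek$, so $d(\mup^\beta)_x$ vanishes on $T_x\OO$.

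There is no genuine obstacle to overcome. The only points requiring a moment's care are the sign bookkeeping in the $\ad$-invariance identity (which depends on whether each bracket lands in $\liek$ or in $\liep$) and the observation that $[x,\beta]\in\liek$ rather than in $\liep$, which is what permits the passage from orthogonality to vanishing via positive definiteness of $\scalo$ on $\liek$.
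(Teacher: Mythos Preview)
Your proof is correct. The paper's own argument is a bit different in flavor: rather than differentiating the linear function $\sx\cdot,\beta\xs$ directly on $T_x\OO=[\liek,x]$ and invoking $\ad$-invariance of $B$, the paper appeals to the general fact from \S\ref{subsection-gradient-moment} that the Riemannian gradient of $\mup^\beta$ on the $G$-invariant submanifold $\OO\subset Z$ is the induced vector field $\beta_Z\restr{\OO}$, and then observes that this vector field is given by a Lie bracket which vanishes exactly on $\liep^\beta$. Your route is more elementary, using only the linear structure on $\liep$ and the $\Ad$-invariance of $B$, and it avoids the K\"ahler/gradient-map machinery entirely; the paper's route, on the other hand, is a one-line application of theory already in place. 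Both arrive at the same bracket condition $[x,\beta]=0$, and your handling of the sign issue and of the key inclusion $[x,\beta]\in\liek$ is accurate.
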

\begin{proof}
\changed{Let $Z$ be the $U$-orbit containing $\OO$ as in \S \ref{coadjoint-orbits}.
  As observed in \S \ref {subsection-gradient-moment}
  $\operatorname{grad} \mupb = \beta_Z\restr{\OO}$.  So the set of critical points of
  $\mupb$ on $\OO$ is the set of zeros of $\beta_Z$ on $Z$ intersected with $\OO$. Since $(i\beta)_Z(x) =
  [i\beta, x]$, we have $\Crit(\mupb) = \OO \cap \liep^\beta$.}
% \textcolor{ForestGreen}{Should we mention that we
%     are identifying $\OO$ with $K\cd ix \subset i\liep$?? I guess
%     not...}
  % The critical points of $\mupb$ on $\OO$ are identified with the
  % critical points of $\mupb= \mu^{-i\beta}$ on $K\cd ix \subset
  % Z:=U\cd ix$, which are just the zeros of $(i\beta)_Z$ on $K\cd
  % ix$. Since $(i\beta)_Z(x) = [i\beta, x]$, $\Crit(\mupb) = K\cd ix
  % \cap i\liep^\beta = \OO \cap \liep^\beta$.
\end{proof}

\begin{lemma}\label{critical-set}
  Let $G$ be semisimple. Fix $x\in \Crit(\mup^\beta)$. Let $\liea
  \subset \liep$ be a maximal subalgebra containing both $x$ and
  $\beta$. Then
  \begin{gather*}
    \Crit(\mup^\beta)=\bigcup_{w \in \Weyl} (K^\beta)^0 \cd w\cdot x =
    (K^\beta)^0 \cd N_K(\lia) \cd x,
  \end{gather*}
  where $\Weyl=\Weyl(\liek,\lia)$ is the Weyl group.
\end{lemma}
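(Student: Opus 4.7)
The plan is to chain together the immediately preceding lemma, which identifies $\Crit(\mup^\beta)$ with $\OO\cap\liep^\beta$, with Proposition~\ref{ss-works}(a) and the classical fact that a $K$-orbit in $\liep$ meets a maximal abelian subalgebra $\lia\subset\liep$ in a single $\Weyl$-orbit.

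The first step is to observe that, since $x,\beta\in\lia$ and $\lia$ is abelian, we have $\lia\subset\liep^\beta$; moreover $\lia$ remains maximal abelian inside $\liep^\beta$, since any abelian enlargement in $\liep^\beta$ would also be abelian in $\liep$, contradicting maximality there. Thus Proposition~\ref{ss-works}(a), applied to the pair $(\liep^\beta,\lia)$, yields the decomposition $\liep^\beta=\bigcup_{k\in(K^\beta)^0}\Ad(k)\lia$.

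For the nontrivial inclusion $\subseteq$ in the first claimed equality, I would take $y\in\OO\cap\liep^\beta$ and use the above decomposition to write $y=\Ad(k_1)z$ with $k_1\in(K^\beta)^0$ and $z\in\lia$. Then $z$ lies in $\lia\cap K\cd x$, so the single-$\Weyl$-orbit principle (cf.\ \cite[Ch.~VII]{helgason}) gives $z=w\cd x$ for some $w\in\Weyl$, whence $y\in(K^\beta)^0\cd w\cd x$. The reverse inclusion is immediate: $w\cd x\in\lia\subset\liep^\beta$, and $(K^\beta)^0$ preserves $\liep^\beta$. Finally, the second equality follows because the $\Weyl$-action on $\lia$ is by definition induced by the adjoint action of $N_K(\lia)$, so $\Weyl\cd x=N_K(\lia)\cd x$.

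The only nontrivial ingredient here is the single-$\Weyl$-orbit statement, but this is classical in the structure theory of real semisimple Lie algebras, so I expect no genuine obstacle; the whole proof amounts to a short chaining of already-established results.
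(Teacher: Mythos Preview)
Your argument is correct and follows essentially the same route as the paper's proof: both combine the identification $\Crit(\mup^\beta)=\OO\cap\liep^\beta$ with Proposition~\ref{ss-works}(a) and the classical fact $\OO\cap\lia=\Weyl\cdot x$. You spell out a few details the paper leaves implicit (maximality of $\lia$ in $\liep^\beta$, the reverse inclusion, the second equality), but the underlying strategy is identical.
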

\begin{proof}
  Let $z\in \Crit(\mup^\beta) = \OO\cap\liep^\beta$. By Proposition
  \ref {ss-works} there is $k\in (K^\beta)^0$ such that $k\cdot z \in
  \liea$.  But $k\cdot z \in \OO$ and $\OO \cap \liea= \Weyl\cdot x$.
\end{proof}
\begin{prop}\label{hesso-2}
  Let $G$ be semisimple.  Assume that $x\in \OO\cap \lia$ and $ \beta
  \in \lia$.  Then $x$ is a local maximum of $\mup^\beta$ if and only
  if there exists a Weyl chamber $C \subset \lia$ such that $x,\beta
  \in \overline{C}$.
\end{prop}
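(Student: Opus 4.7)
Since $x,\beta\in\lia$ commute, $[\beta,x]=0$; as $\Crit(\mup^\beta)=\OO\cap\liep^\beta$ (cf.\ the lemma preceding \ref{critical-set}), we have $x\in\Crit(\mup^\beta)$. The strategy is to compute the Hessian $D^2\mup^\beta(x)$ in a restricted-root basis of $T_x\OO$ and then invoke Lemma \ref{Heckman-roots}.

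Use the restricted root decomposition $\lieg=\liem\oplus\lia\oplus\bigoplus_\mu\lieg_\mu$ with respect to $\lia$. It induces $\liek=\liem\oplus\bigoplus_{\mu>0}\liek_\mu$ and $\liep=\lia\oplus\bigoplus_{\mu>0}\liep_\mu$, where $\liek_\mu=\{X+\theta X:X\in\lieg_\mu\}$ and $\liep_\mu=\{X-\theta X:X\in\lieg_\mu\}$. For $0\ne X_\mu\in\lieg_\mu$ set $k_\mu=\tfrac12(X_\mu+\theta X_\mu)\in\liek_\mu$ and $p_\mu=\tfrac12(X_\mu-\theta X_\mu)\in\liep_\mu$. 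Since $x\in\lia$, a direct bracket calculation yields $[k_\mu,x]=-\mu(x)\,p_\mu$ and $[k_\mu,\beta]=-\mu(\beta)\,p_\mu$. As $[\liem,x]=0$, the tangent space $T_x\OO=[\liek,x]$ admits the orthogonal decomposition $T_x\OO=\bigoplus_{\mu(x)\ne 0}\liep_\mu$.

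The function $\mup^\beta$ is the restriction to $\OO$ of the linear map $y\mapsto\langle y,\beta\rangle$ on $\liep$; hence at the critical point $x$, for any smooth curve $\ga$ with $\ga(0)=x$, we have $(\mup^\beta\circ\ga)''(0)=\langle\ga''(0),\beta\rangle=D^2\mup^\beta(x)(\ga'(0),\ga'(0))$. Choosing $\ga(t)=\Ad(\exp t\xi)x$ with $\xi\in\liek$, the $\ad\liek$-invariance of $\langle\cdot,\cdot\rangle$ gives
\begin{gather*}
D^2\mup^\beta(x)\bigl([\xi,x],[\xi,x]\bigr)=\langle[\xi,[\xi,x]],\beta\rangle=-\langle[\xi,x],[\xi,\beta]\rangle.
\end{gather*}
For $\xi=k_\mu$ this evaluates to $-\mu(x)\mu(\beta)\,|p_\mu|^2$, while cross-terms for $\mu\ne\pm\mu'$ vanish because $\lieg_\mu\perp\lieg_{\mu'}$ under the $K$-invariant form. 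Therefore the Hessian is diagonal on $T_x\OO$ with eigenvalues of sign $-\mu(x)\mu(\beta)$.

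Consequently $x$ is a local maximum of $\mup^\beta$ if and only if $\mu(x)\mu(\beta)\ge 0$ for every restricted root $\mu$ (the inequality is automatic when $\mu(x)=0$), which by Lemma \ref{Heckman-roots} is equivalent to the existence of a Weyl chamber $C\subset\lia$ with $x,\beta\in\overline C$. The main technical step is the diagonalization of the Hessian in the restricted-root basis; once that is done the conclusion is an immediate application of Heckman's combinatorial lemma. An alternative route would invoke Corollary \ref{MorseBott} together with Proposition \ref{tangent} on the $G$-homogeneous $\OO$ to reduce the local-max condition to $V_+=d\alfa_e(\lier^{\beta+})=0$ and then analyze this through the isotropy $\lieg_x$, but the direct computation above bypasses the need to identify $\lieg_x$.
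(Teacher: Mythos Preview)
Your computation of the Hessian in the restricted-root basis and the appeal to Lemma~\ref{Heckman-roots} are essentially the same as the paper's proof. However, there is a genuine gap in the ``if'' direction. You assert that $x$ is a local maximum \emph{if and only if} all the diagonal Hessian entries $-\mu(x)\mu(\beta)$ are nonpositive, but a negative semidefinite Hessian by itself does not force a local maximum (think of $f(s,t)=-s^{2}+t^{3}$ at the origin). The implication ``$\mu(x)\mu(\beta)\ge 0$ for all $\mu$ $\Rightarrow$ $x$ is a local maximum'' requires an additional argument.

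The paper fills this gap by identifying the kernel of $D^{2}\mup^{\beta}(x)$: when all $\la(x)\la(\beta)\ge 0$, a tangent vector $w$ lies in the kernel precisely when $\la(\beta)=0$ for every $\la$ contributing to $w$, i.e.\ $w\in\liek^{\beta}\cdot x = T_{x}\Crit(\mup^{\beta})$. Thus the Hessian is negative definite transverse to the critical submanifold, and (via the Morse--Bott structure, Corollary~\ref{MorseBott}) $x$ is a genuine local maximum. You mention Morse--Bott only as an ``alternative route'', but some version of this step---identifying the Hessian kernel with the tangent space to the critical set---is not optional; it is exactly what your direct argument is missing.
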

\begin{proof}
  Let $\roots$ be the set of restricted roots of $(\lieg, \lia)$ and
  let $\xi = \xi_0 + \sum_{\la \in \roots}\xi_\la$ with $\xi_\la \in
  \lieg_\la$.  Fix a set of positive roots $\roots_+$ such that $\la
  (x ) \geq 0$ for every $\la \in \roots_+$.  We have
    \begin{gather*}
      \liek= \liez_\liek(\lia) \oplus \bigoplus _{\la \in \roots_+}
      \bigl ( \lieg_\la \oplus \lieg_{-\la}\bigr)\cap \liek.
    \end{gather*}
    (See e.g. \cite[p. 370]{knapp-beyond}.)  Since $T_x\OO = \liek \cd
    x = [\liek, x]$ and $[x, \lieg_\la] = \lieg_\la $ if $\la(x) \neq
    0$ and $[ x, \lieg_\la ] = 0$ otherwise, we have
    \begin{gather*}
      T_x\OO = \bigoplus _{\la(x)>0} \bigl ( \lieg_\la \oplus
      \lieg_{-\la}\bigr)\cap \liep.
    \end{gather*}
    If $w\in T_x\OO$, choose $\xi\in \liek$ such that $w=\xi_\OO(x) =
    [\xi, x]$ and set $\ga(t): = \Ad(\exp(t\xi)) \cd x$.  Then $\ga
    (0) = x$, $ \dot{\ga}(t)= [\xi, \ga(t)]$, $ \ddot{\ga}(0) = [\xi,
    [\xi, x]]$ and
  \begin{gather*}
    D^2\mup^\beta(x)(w,w) = \desudtzero \mup^\beta(\ga(t)) = \sx
    \ddot{\ga}(0), \beta \xs =-\sx [\xi,x], [\xi, \beta]\xs .
  \end{gather*}
  We can assume that $\xi = \sum_{\la(x)>0}\xi_\la$ with $\xi_\la \in
  \lieg_\la$.  This determines $\xi$ uniquely.  Then
  \begin{gather*}
    [x, \xi ] =\sum_{\la(x)>0} \la(x) z_\la
  \end{gather*}
  where $z_\la = \xi_\la - \xi_{-\la} $.  Since $\xi \in \liek$,
  $\theta(\xi_\la) = \xi_{-\la}$ and $z_\la \in \liep$.  Moreover the
  vectors $z_\la$ are orthogonal to each other.  Similarly $ [\beta,
  \xi ] =\sum_{\la \in \root_+} \la(\beta) z_\la $. So
  \begin{gather*}
    D^2\mup^\beta(x)(w,w) = -\sum_{\la(x)>0} \la (x) \la(\beta)
    |z_\la|^2.
  \end{gather*}
  If there is $\la \in \roots_+$ such that $\la(x)\la(\beta) < 0 $,
  then $x$ is not a local maximum point.  Otherwise the Hessian is
  negative semidefinite and $D^2\mupb(x)(w, w) =0$ if and only if $z_\la \neq 0
  \Rightarrow \la(\beta) =0$.  This means that the kernel of
  $D^2\mupb(x) $ is $\liek^\beta \cd x =T_x\Crit (\mupb)$. So the
  Hessian is degenerate only along the critical submanifold and is
  negative definite in the transverse direction. It follows that $x$
  is a local maximum point.  Summing up we have shown that $x$ is a
  local maximum point of $\mupb$ if and only if $\la (x) \la(\beta ) \geq 0$ for
  every $\la \in \roots$.  By Lemma \ref{Heckman-roots} this is
  equivalent to the condition that $x$ and $\beta$ lie in the closure
  of some Weyl chamber.  The result follows.
\end{proof}

\begin{proof}[Proof of Proposition \ref{massimo-connesso}]
  We start assuming that $G$ is semisimple.  Let $E$ be the set of all
  local maxima of $\mup^\beta$.  Since the function $\mup^\beta$ is
  $K^\beta$-invariant, the sets $E$ and $\ml(\beta)$ are
  $K^\beta$-invariant.  Since $\OO$ is compact there is at least a
  point $x\in \ml(\beta)$. Let $\lia \subset \liep $ be a maximal
  subalgebra containing $x$ and $\beta$. If $y\in E$, then by Lemma
  \ref{critical-set} there are $a \in (K^\beta)^0$ and $\tilde{w}\in
  \Weyl(\lieg, \lia)$ such that $y= a\cd \tilde{w} \cd x$. Since $y\in
  E$, also $\tilde{w}\cd x \in E$.  By Proposition \ref{hesso-2} there
  are Weyl chambers $C,C' \subset \lia$ such that $x, \beta \in
  \overline{C}$ and $w\cd x , \beta\in \overline{C'}$. By Lemma
  \ref{w-tilde} there is $w \in \Weyl$ such that $ w\cd x =
  \tilde{w}\cd x$ and $w \cd \beta = \beta$.  By Proposition
  \ref{ss-works} there is $k \in (K^\beta)^0$ such that $w \cd x =
  k\cd x$.
%
  % Thus $ y \in (K^\beta)^0 \Weyl' \cdot x$, where
  % $\Weyl'=\{\sigma\in \Weyl:\, w \cdot \beta=\beta\}$.  Observe that
  % $\lia$ is a maximal abelian subalgebra of $\liep^\beta$ and that
  % the subgroup $ (G^\beta)^0$ is reductive.  Its root system is
  % $\roots(\lieg^\beta, \lia) = \{\la \in \roots(\lieg, \lia) :
  % \la(\beta) = \beta \}$. Hence $\Weyl'$ coincides with the Weyl
  % group of $\roots(\lieg^\beta, \lia)$.  It is known that the Weyl
  % group of a reductive Lie algebra coincides with the (analytic)
  % Weyl group of the corresponding reductive group \cite[p. 379,
  % p. 383, p. 456-457]{knapp-beyond}. Hence the action of $\Weyl'$ on
  % $\lia$ coincides with the action of $N_{(K^\beta)^0}(\lia) \subset
  % (K^\beta)^0$.
  It follows that $y\in (K^\beta)^0 \cdot x$. So $ E \subset
  (K^\beta)^0 \cdot x$. Since $(K^\beta)^0 \cd x \subset \ml(\beta)
  \subset E$ we conclude that $E=\ml(\beta) = (K^\beta)^0\cd x$.  In
  particular $\ml(\beta)$ is connected because it is an orbit of a
  connected group.  Since $\ml (\beta)$ is $K^\beta$-stable we also
  have $\ml(\beta) = K^\beta\cd x$.  If $G$ is not semisimple, then
  split $\lieg= \liez \oplus [\lieg, \lieg]$ with $\liez =
  \liez(\lieg)$. Accordingly $\liep=\liez\cap \liep \oplus
  \liep_{ss}$, $\liek = \liek \cap \liez \oplus \liek_{ss}$. Since $K$
  is connected, $K= \bigl (Z(G)\cap K\bigr )^0 \cd K_{ss}$.  If $\OO =
  K\cd x$ split $x= x_ 0 + x_1$ with $x_0\in \liez \cap \liep$ and
  $x_{1} \in \liep_{ss}$.  Then $\OO = x_0 + \OO_{1}$ where $\OO_{1}=
  K_{ss} \cd x_{1}$. If $\beta \in \liep$, split $\beta = \beta_0 +
  \beta _{1}$ with $\beta_0\in \liep\cap \liez$ and $\beta_1\in
  \liep_{ss}$. Then $\ml(\beta) = x_0 + \ml(\beta_{1})$.  By Lemma
  \ref{lemcomp} \ref{lemcomp2} $G_{ss}$ is a semisimple compatible
  subgroup of $U^\C$ and $\OO_1$ is a $K_{ss}$-orbit in
  $\liep_{ss}$. Therefore we know that $\ml(\beta_{1})$ is connected
  and that it is an orbit of both $(K_{ss}^{\beta_{1}})^0$ and
  $K_{ss}^{\beta_{1}}$. Since $K^\beta = \bigl (Z(G)\cap K \bigr )\cd
  K_{ss}^{\beta_{1}}$, we conclude that $\ml(\beta)$ is a connected
  orbit of $K^\beta$. Therefore it is also an orbit of $(K^\beta)^0$.
\end{proof}

% \comment { $\c$ is a face of itself. If $\c$ is full in $\liep$,
% then exposed faces are by definition proper.  When $\c$ is not full,
% it may happen that $F_\beta (\c) = \c$ even if $\beta \neq 0$. So in
% the general case exposed faces are non-empty, but might be equal to
% $\c$. Maybe we can mention this at the beginning after the
% definition of exposed face?  Perhaps what we should avoid in Theorem
% 33 is the empty face.  }

\begin{cor}\label{hesso}
  Let $\beta$ be a nonzero vector in $ \liep$ and let $F_\beta(\c)$ be
  the exposed face of $\c$ defined by $\beta$, see
  \eqref{def-exposed}.  Then $\ext F_\beta(\c) = \ml(\beta) $,
  $F_\beta (\c) \subset \liep ^\beta$ and $\ext F_\beta(\c)$ is both a
  $K^\beta$ and a $(K^\beta)^0$-orbit.
\end{cor}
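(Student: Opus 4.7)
The plan is to derive all three assertions directly from Proposition \ref{massimo-connesso} together with the general convexity facts from \S \ref{pre-convex} and the identification $\mup^\beta(x)=\sx x,\beta\xs$ on $\OO$.

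First I would identify $\ext F_\beta(\c)$ with $\ml(\beta)$. Since a linear functional on a compact convex set attains its maximum at an extreme point, and $\ext \c = \OO$ by Lemma \ref{extO=O}, one has $h_\c(\beta) = \max_{y\in \c}\sx y,\beta\xs = \max_{x\in\OO}\sx x,\beta\xs = \max_\OO \mup^\beta$. Hence $F_\beta(\c)\cap\OO$ is exactly $\ml(\beta)$. Applying Lemma \ref{ext-facce} to the face $F_\beta(\c)\subset \c$ gives $\ext F_\beta(\c) = F_\beta(\c)\cap \ext\c = F_\beta(\c)\cap \OO = \ml(\beta)$.

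Next I would show $F_\beta(\c) \subset \liep^\beta$. Since every point of $\ml(\beta)$ is a critical point of $\mup^\beta$, the lemma stating $\Crit(\mup^\beta) = \OO\cap\liep^\beta$ gives $\ml(\beta)\subset\liep^\beta$. Because $F_\beta(\c)$ is compact and convex, it equals the convex hull of its extreme points (Minkowski), so $F_\beta(\c) = \conv\ml(\beta) \subset \liep^\beta$, the latter being a linear subspace.

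Finally, the orbit statement is immediate from Proposition \ref{massimo-connesso}, which asserts precisely that $\ml(\beta)$ is simultaneously a $K^\beta$-orbit and a $(K^\beta)^0$-orbit. No step here is a real obstacle: the main content of the corollary is already packaged in Proposition \ref{massimo-connesso}, and this corollary just rephrases it in the language of exposed faces.
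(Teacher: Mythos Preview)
Your proof is correct and follows essentially the same route as the paper: identify $\ext F_\beta(\c)$ with $\ml(\beta)$ via Lemma~\ref{extO=O} (the paper uses its second assertion directly, you spell it out via Lemma~\ref{ext-facce}), deduce $F_\beta(\c)\subset\liep^\beta$ from $\Crit(\mup^\beta)=\OO\cap\liep^\beta$, and read off the orbit statement from Proposition~\ref{massimo-connesso}. The only cosmetic difference is that you make the Minkowski step explicit where the paper leaves it implicit.
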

\begin{proof}
  By Lemma \ref{extO=O} $\ext F_\beta(\c) = \OO \cap F_\beta(\c) =
  \ml(\beta) $. Since $ \Crit(\mup^\beta) = \OO \cap \liep^\beta$, we
  see that $F_\beta(\c) \subset \liep^\beta$.  By Proposition
  \ref{massimo-connesso} $\ext F_\beta (\c) = \ml(\beta)$ is an orbit
  of $(K^\beta)^0$.
\end{proof}

\begin{prop}\label{facciona-orbita}
  Let $F$ be a nonempty face of $\c$. Then there is an abelian
  subalgebra $\lies \subset \liep$ such that $F$ is an orbitope of
  $(G^\lies)^0$, i.e.  $F \subset \liez_{\liep } (\lies)$ and $\ext F$
  is an orbit of $ (K^\lies)^0$.  If $F$ is proper, then $\lies \neq
  \{0\}$.
\end{prop}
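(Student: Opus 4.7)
Plan: I will argue by induction on $\dim \c$, viewing the statement as holding for any orbitope of any connected compatible subgroup of any $U^\C$. The base case $F=\c$ (which covers $\dim\c=0$) is handled by $\lies=\{0\}$: the Cartan decomposition $K\times\liep\to G$ forces $K$ to be connected, so $(K^{\{0\}})^0=K$, and $\ext F=\OO$ is a $K$-orbit sitting inside $\liep=\liez_{\liep}(\{0\})$. For the inductive step with $F\subsetneq\c$ I first produce a nonzero $\beta\in\liep$ with $F\subset F_\beta(\c)\subsetneq\c$. Picking $x\in\relint F$, which lies on the relative boundary of $\c$ since $F$ is proper, there is a supporting hyperplane of $\c$ in $\aff\c$ at $x$ that is not constant on $\c$; extending its defining linear functional from $\aff\c$ to $\liep$ produces the desired $\beta$, and the standard fact that any supporting hyperplane at a point of $\relint F$ must contain all of $F$ gives $F\subset F_\beta(\c)$.

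By Corollary \ref{hesso}, $F_\beta(\c)\subset\liep^\beta$ and its set of extreme points is a $(K^\beta)^0$-orbit, so $F_\beta(\c)$ is the convex hull of that orbit. By Lemma \ref{lemcomp} the group $(G^\beta)^0\subset U^\C$ is connected and compatible, with associated maximal compact and symmetric parts $(K^\beta)^0$ and $\liep^\beta$; hence $F_\beta(\c)$ is an orbitope of $(G^\beta)^0$, of strictly smaller dimension than $\c$. The face $F$ of $\c$, being contained in the face $F_\beta(\c)$ of $\c$, is automatically a face of $F_\beta(\c)$. The inductive hypothesis therefore yields an abelian subalgebra $\lies'\subset\liep^\beta$ with $F\subset\liez_{\liep^\beta}(\lies')$ and $\ext F$ an orbit of $((K^\beta)^0\cap K^{\lies'})^0$.

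Set $\lies:=\R\beta+\lies'$. Since $\lies'\subset\liep^\beta$, every element of $\lies'$ commutes with $\beta$, so $\lies$ is an abelian subalgebra of $\liep$, nonzero because $\beta\neq 0$. The connected subgroups $(G^\lies)^0$ and $((G^\beta)^0\cap G^{\lies'})^0$ share the Lie algebra $\lieg^\beta\cap\lieg^{\lies'}$ and hence coincide, and the analogous comparison at the level of $K$ shows that $\ext F$ is an orbit of $(K^\lies)^0$; finally $F\subset\liep^\beta\cap\liep^{\lies'}=\liez_{\liep}(\lies)$, which completes the induction. The main obstacle is the first half of the inductive step, namely extracting a proper exposed face of $\c$ that contains $F$ and checking that $(G^\beta)^0$ fits the paper's set-up (connected, compatible, with orbitope living in $\liep^\beta$); once these points are in place, the remainder is routine bookkeeping with Lie algebras and connected components.
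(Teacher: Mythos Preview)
Your argument is correct and follows essentially the same strategy as the paper: both proofs peel off an exposed face containing $F$, invoke Corollary~\ref{hesso} to recognise it as an orbitope of a centralizer, and recurse, accumulating the successive exposing vectors into the abelian subalgebra~$\lies$. The only cosmetic difference is the inductive parameter --- you induct on $\dim\c$ and locate a proper exposed face containing $F$ via the supporting-hyperplane theorem, whereas the paper fixes a maximal chain $F=F_0\subsetneq\cdots\subsetneq F_k=\c$ (Lemma~\ref{face-chain}) and inducts on its length~$k$, using that each $F_{i-1}$, being maximal in $F_i$, is exposed there.
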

\begin{proof}
  Fix a chain of faces $F=F_0 \subsetneq F_1 \subsetneq \cds
  \subsetneq F_k = \c$, such that for any $i$ there is no face
  strictly contained between $F_{i-1}$ and $ F_i$.  This is possible
  by Lemma \ref{face-chain}.  We will prove the result by induction on
  $k$. If $k=0$, then $F=\c$, so it is enough to set $\lies = \{0\}$.
  Let $k>1$ and assume that the theorem is proved for faces contained
  in a maximal chain of length $k-1$. Fix $F$ with a maximal chain as
  above of length $k$.  By the inductive hypothesis the theorem holds
  for $F_1$, so there is a nontrivial abelian subalgebra $\lies_1
  \subset \liep$ such that $F_1 \subset \liep^{\lies_1}$ and $\ext
  F_1$ is an orbit of $ (K^{\lies_1} )^0$.  In other words $F_1$ is an
  orbitope of $(G^{\lies_1})^0$, which is a compatible subgroup by Lemma
  \ref{lemcomp} \ref{lemcomp3}. Since $F$ is a maximal face of $F_1$,
  it is exposed. There is $\beta \in \liep^{\lies_1}$ such that
  $F=F_\beta (F_1)$. Set $ \lies = \lies_1 \oplus \R \beta$.  By
  Corollary \ref{hesso} $F \subset (\liep^\lies)^\beta = \liep^\lies$
  and $\ext F $ is an orbit of $((K^{\lies_1})^\beta)^0 =
  (K^\lies)^0$. Thus the inductive step is completed.  If $\lies =
  \{0\}$, then $(K^\lies)^0=K$, $\ext F = \OO$ and $F=\c$. So for
  proper faces $\lies\neq \{0\}$.
\end{proof}

\subsection{All faces are exposed}

\label{faces-2}

Let $G \subset U^\C$ be a compatible subgroup and let $\OO $ be a
$K$-orbit in $\liep$.  In general $\dim\c$ might be less than $\dim
\liep$ and there might be some normal subgroup of $K$ that acts
trivially on $\OO$. We wish to describe a decomposition of $G$ that is
useful in dealing with this degeneracy.  Let $A$ be \changed{the} affine hull of
$\OO$. This is an affine subspace of $\liep$ and we can write $A = x_0
+ \lipo$, where $\lipo \subset \liep$ is a linear subspace and $x_0
\in \liep$.  If we impose that $x_0\perp \lipo$, then $x_0$ is
uniquely determined.  It follows that $x_0$ is fixed by $K$. Hence by
Lemma \ref{convex-orbit} $x_0 \in \relint \c$.  Set also
\begin{gather*}
  \liek_1:=[\liep_1, \liep_1] \qquad
  \liep_0=\liep_1^\perp \qquad \liek_0=\liek_1^\perp \quad
  \lieg_1:=\liek_1 \oplus \liep_1 \qquad \lieg_0 : = \liek_0 \oplus
  \liep_0.
\end{gather*}
Thus $\liek = \liek_0\oplus \liek_1$ and $\liep=\liep_0 \oplus
\liep_1$ and $\lieg = \lieg_0 \oplus \lieg_1$.
\begin{prop}
  \label{deco-prop}
  $\lieg_1$ is a semisimple ideal of $\lieg$ and $\lieg_0$ is a
  reductive ideal. If $G_1$, $K_0$, $K_1$ are the corresponding
  analytic (connected) subgroups, then $G_1$ is compatible with $U^\C$
  and $K^0=K_0\cd K_1$.
  % If $\OO \subset \liep$ is a $K$-orbit, there is a connected
  % semisimple normal subgroup $G_1 \subset G$ which is compatible
  % with $U^\C$ and has decomposition $\ligo = \liko \oplus \lipo $,
  % where $\liko=[\lipo, \lipo]$.  If $x\in \OO$, then $x=x_0 + x_1$
  % for some
  If $x\in \OO$, then $x=x_0 + x_1$ for some $x_1 \in \lipo$ and $\OO
  = x_0 + K_1\cd x_1$.
\end{prop}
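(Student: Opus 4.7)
The plan is to deduce a complete bracket decomposition of $\liek = \liek_0 \oplus \liek_1$ and $\liep = \lipo \oplus \lipt$ from the $\ad$-invariant bilinear form $B$ of page \pageref{def-B} ($B = -\scalo$ on $\liek$, $B = \scalo$ on $\liep$), and then read off the remaining assertions as corollaries. The preliminary observation, via Lemma \ref{convex-orbit}, is that $x_0$ is $K$-fixed, so $K$ preserves both $x_0$ and the affine hull $A$, hence the linear subspace $\lipo = A - x_0$. This gives $[\liek, \lipo] \subset \lipo$; since $K$ acts orthogonally on $\liek$ and on $\liep$, all four pieces $\liek_0, \liek_1, \lipo, \lipt$ are $\ad\liek$-invariant.

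The technical heart consists of three bracket identities derived from $B([X, Y], Z) = B(X, [Y, Z])$. First, $[\liek_0, \lipo] = 0$: for $X \in \liek_0$ and $Y \in \lipo$ the vector $[X,Y]$ already lies in $\lipo$, and pairing with $Z \in \lipo$ gives $B([X,Y],Z) = B(X, [Y,Z])$, which vanishes since $[Y,Z] \in \liek_1$ is $B$-orthogonal to $X$. A symmetric pairing argument then yields $[\lipt, \lipo] \subset \liek_1$ (test against $W \in \liek_0$ and use $[Z, W] = 0$). Finally, applying the Jacobi identity to $W = [V_1, V_2] \in \liek_1$ together with the previous identity shows $[W, Y] \in \lipo$ for any $Y \in \lipt$, which combined with $[W, Y] \in \lipt$ forces $[\liek_1, \lipt] = 0$. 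These identities imply that $\lieg_1 = \liek_1 \oplus \lipo$ is an ideal; since $\lieg_0$ is the $B$-orthogonal complement of $\lieg_1$ and $B$ is nondegenerate and $\ad$-invariant, $\lieg_0$ is also an ideal, and $[\lieg_0, \lieg_1] = 0$. As an ideal of the reductive algebra $\lieg$, $\lieg_0$ is reductive.

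For semisimplicity of $\lieg_1$, $\theta$-invariance of $\liez(\lieg_1)$ splits it into a $\liek_1$-part (trivial by the first identity) and a $\lipo$-part. Any $\xi$ in the latter centralizes $\lipo$, hence centralizes $\liek_0 \oplus \liek_1 = \liek$ as well (use the first identity and the Jacobi identity together with $\liek_1 = [\lipo, \lipo]$), so $\xi \in \lipo \cap \liep^K$. The crucial remaining point is $\lipo^K = \{0\}$: Lemma \ref{convex-orbit} gives $\rho(x_1) = 0$, and $K$-invariance of $\rho$ together with $\lipo = \spam (K \cd x_1)$ force $\rho$ to vanish identically on $\lipo$.

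The remaining claims follow routinely. The analytic subgroup $G_1$ tangent to the semisimple $\theta$-invariant ideal $\lieg_1$ is closed, connected, and $\theta$-invariant, hence compatible by Lemma \ref{lemcomp} \ref{lemcomp1}. The equality $K^0 = K_0 \cd K_1$ is the group form of $\liek = \liek_0 \oplus \liek_1$ as a direct sum of commuting ideals. Since $G$ is connected, the fibration $K \to G \to G/K$ with contractible base forces $K$ to be connected as well; combined with the first identity (which says that $K_0$ pointwise fixes $\lipo$), this gives $\OO = K \cd x = K_0 K_1 \cd (x_0 + x_1) = x_0 + K_1 \cd x_1$. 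The main obstacle is the $B$-invariance manipulation of the second paragraph; once those three identities are secured, the rest is structural bookkeeping.
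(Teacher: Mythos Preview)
Your argument is correct and follows essentially the same route as the paper: exploit the $\Ad$-invariance of $B$ to pin down the bracket relations among the four pieces, conclude that $\lieg_1$ is a $\theta$-invariant ideal, kill its center via the affine-hull description of $\OO$, and then read off the group-level statements. The specific identities you isolate ($[\liek_0,\liep_1]=0$, $[\liep_0,\liep_1]\subset\liek_1$, $[\liek_1,\liep_0]=0$) differ slightly in order from the paper's ($[\liep_1,\liep_0]=0$, $[\liep_0,\liek_1]=0$), but each set suffices to make $\lieg_1$ an ideal, and once $\lieg_0$ is the $B$-orthogonal complementary ideal you recover $[\liep_0,\liep_1]=0$ anyway. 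Your semisimplicity argument via $\liep_1^K=\{0\}$ and the averaging projection $\rho$ is a mild rephrasing of the paper's affine-hull splitting $x=x_0+x_1+x_2$; both express the same fact that $\liep_1$ carries no nonzero $K$-fixed vector.

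One small imprecision: the claim that the $\liek_1$-part of $\liez(\lieg_1)$ vanishes is not really ``by the first identity'' $[\liek_0,\liep_1]=0$. What you need is that if $\xi\in\liek_1$ satisfies $[\xi,\liep_1]=0$, then $B(\xi,[\liep_1,\liep_1])=B([\xi,\liep_1],\liep_1)=0$, so $\xi\perp\liek_1$ and hence $\xi=0$. This is the same $B$-invariance trick as in your first identity, but applied in the converse direction (characterizing the full annihilator of $\liep_1$ in $\liek$ as $\liek_0$); alternatively, note that $\lieg_1$ is reductive and $\liek_1\subset[\lieg_1,\lieg_1]$. Either way the fix is immediate and your overall proof stands.
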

\begin{proof}
  Since $\OO$ is a $K$-orbit, its affine hull is
  $K$-invariant. Therefore $x_0$ is fixed by $K$ and $[\liek, \lipo]
  \subset \lipo$.  It follows that $[\liek, \liko] =[\liek, [\lipo,
  \lipo]] = [\lipo, [\lipo, \liek]] \subset [\lipo, \lipo]=\liko$.
%  Denote by $\lipt$ the orthogonal complement of $\lipo$ in $\liep$
%  and by $\likt$ the orthogonal complement of $\liko $ in $\liek$.
  Since $[\liek, \lipo]\subset\lipo$ and $[\liek, \liko]\subset\liko$
  also $[\liek, \lipt] \subset \lipt$ and $[\liek, \likt] \subset
  \likt$. Moreover $\sx [\lipo, \lipt], \liek\xs = B([\lipo, \lipt],
  \liek) = B( \lipt, [\liek, \lipo]) \subset B(\lipt, \lipo) = \sx
  \lipt, \lipo \xs =0$. ($B$ is the bilinear form defined \changed{at
the end of \S \ref{comp-subgrous}}.)
% p. \pageref{def-B}.)
Since $[\lipo, \lipt]\subset \liek$ this means
  that $[\lipo, \lipt]=0$.  Using the Jacobi identity we get also
  $[\lipt, \liko] = [\lipt, [\lipo, \lipo]] = [\lipo, [\lipo,
  \lipt]]=0$.  Set $\lieg_1 :=\liek_1 \oplus \liep_1$. We have just
  showed that $\lieg_1$ is an ideal of $\lieg$.  Since it is
  $\theta$-invariant, $\lieg_1$ is a reductive subalgebra.  We claim
  that it is semisimple.  $\liek_1 \subset [\lieg_1, \lieg_1]$, so
  $\liez(\lieg_1) \subset \liep_1$.  Pick $x\in \OO$. We can split
  $x=x_0 + x_1 +x_2$ where $x_0 $ is as above, $x_2 \in
  \liez(\lieg_1)\cap \liep_1$, $x_1 \in \liep_1$ and $x_1 \perp
  \liez(\lieg_1)$. It follows that $\OO = x_0 +x_2 + K\cd x_1$, so the
  affine hull of $\OO$ is $x_0 +x_2 + \liep_1 \cap
  \liez(\lieg_1)^\perp$. Therefore $x_2 =0$ and $\liep_1 \cap
  \liez(\lieg_1)^\perp = \liep_1$, i.e. $\liez(\lieg_1)=\{0\}$. This
  proves that $\lieg_1$ is semisimple. Let $G_1\subset G$ the
  (connected) analytic subgroup tangent to $\lieg_1$. It is normal,
  closed \cite[p. 440]{knapp-beyond} and compatible by Lemma
  \ref{lemcomp} \ref{lemcomp3}.  The $B$-orthogonal complement of
  $\lieg_1$ is $\liek_0 \oplus \liep_0$, which is also an ideal. So
  $K=K_0\cd K_1$ where $K_1 = G_1 \cap U$ and $K_0$ is the analytic
  subgroup of $K$ tangent to $\liek_0$. Since $K_0$ and $K_1$ are
  normal commuting subgroups $K_0$ acts trivially on $\liep_1$. Hence
  $\OO = x_0 + K_1 \cd x_1$.
\end{proof}
This decomposition can be further refined by setting $\lieg_2 : =
[\lieg_0, \lieg_0]$ and $\lieg_3:=\liez(\lieg) = \liez(\lieg_0)$. They
are both $\theta$-invariant ideals of $\lieg$, $\lieg_2$ is \changed{semisimple} and
\begin{gather}
  \label{deco-3} \lieg=\lieg_1\stackrel{\perp}{\oplus} \lieg_2
  \stackrel{\perp}{\oplus} \lieg_3.
\end{gather}
% \begin{gather}
%   \label{deco-3} \lieg=\hat{\lieg}\stackrel{\perp}{\oplus} \check{\lieg}
%   \stackrel{\perp}{\oplus} \dot{\lieg}.
% \end{gather}
Set $\liep_i:=\lieg_i\cap \liep$ and $\liek_i:=\lieg_i\cap \liek$.  At
the group level $K^0=K_1\cd K_2 \cd K_3$, where $K_i$ are the
corresponding analytic (connected) subgroups. Since $K\cd x_0=x_0$,
$x_0 \in \lieg_3$.

Let $\lia \subset \liep$ be a maximal subalgebra. Let $\pi: \liep \ra
\liea$ denote the orthogonal projection.  Set
\begin{gather*}
  P:=\pi(\OO).
\end{gather*}
The following convexity theorem of Kostant \cite{kostant-convexity} is
the basic ingredient in the whole theory.
\begin{teo}[Kostant] \label{Kostant} Let $x \in \liea\cap \OO$. Then $
  P=\mathrm{conv}(W\cdot x)$.  In particular, $ \polp$ is a convex
  polytope, $\ext \polp = \OO \cap \liea$ and $\ext \polp$ is a
  $W$-orbit.
\end{teo}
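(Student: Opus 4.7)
The plan is to establish the two inclusions $W\cdot x \subset P \subset \conv(W\cdot x)$ with the gradient momentum-map machinery already developed in \S\ref{faces1}, and then to close the remaining gap by a classical convexity argument going back to Kostant.

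First I would observe that $\OO\cap\lia = W\cdot x$. Since maximal abelian subalgebras of $\liep$ are $K$-conjugate, the stabilizer $\{k\in K:\Ad(k)\lia=\lia\}$ equals $N_K(\lia)$, and $W=N_K(\lia)/K^\lia$ acts on $\lia$ through $K$. Hence any $y\in\OO\cap\lia$ has the form $\Ad(k)x$ with $\Ad(k)\lia=\lia$, and so $y\in W\cdot x$. Because $\pi$ is the identity on $\lia$, this already gives $W\cdot x\subset P$, and in fact $W\cdot x\subset\ext\conv(W\cdot x)\cap P$.

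For the inclusion $P\subset\conv(W\cdot x)$ I would compare support functions. For $\beta\in\lia$, the orthogonality of $\pi$ onto $\lia\ni\beta$ yields
\begin{gather*}
h_P(\beta)=\max_{y\in\OO}\sx\pi(y),\beta\xs=\max_{y\in\OO}\sx y,\beta\xs=\max_\OO\mupb.
\end{gather*}
By Lemma \ref{critical-set} every critical point of $\mupb$ lies in $(K^\beta)^0\cdot N_K(\lia)\cdot x$, and because $\mupb$ is $K^\beta$-invariant its maximum is realised at some $w\cdot x$ with $w\in W$. Hence $h_P(\beta)=\max_{w\in W}\sx w\cdot x,\beta\xs=h_{\conv(W\cdot x)}(\beta)$, and since a compact convex subset of $\lia$ is the intersection of its supporting half-spaces, $P\subset\conv(W\cdot x)$ follows.

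The serious work lies in the reverse inclusion, which is equivalent to the convexity of $P=\pi(\OO)$. Since $\pi|_\OO$ is not the momentum map of a torus action on $\OO$, the Atiyah-Guillemin-Sternberg theorem is not directly available. The cleanest route I would take is Kostant's original argument via the Iwasawa decomposition $G=KAN$, which exhibits every point of $\conv(W\cdot x)$ as $\pi(\Ad(k)x)$ for a suitable $k\in K$ by tracking the gradient flow of $\mupb$ on $\OO$. A more pedestrian alternative is to reduce to the rank-one case: for each simple restricted root $\alpha$ the $\Sl(2,\R)_\alpha$-subgroup acts on $\OO$ with a $1$-dimensional orbit through a vertex whose $\pi$-image is the full segment $[w\cdot x, s_\alpha w\cdot x]\subset\lia$; concatenating along reduced expressions in $W$ and then averaging produces every interior point of $\conv(W\cdot x)$. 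Once convexity is in hand, the remaining assertions ($P$ a convex polytope with $\ext P=\OO\cap\lia=W\cdot x$) follow immediately from $P=\conv(W\cdot x)$ together with the first paragraph.

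The principal obstacle is precisely this convexity step: the Morse-theoretic inputs of \S\ref{faces1} pin down $\max_\OO\mupb$ but do not on their own produce the interior points of $\conv(W\cdot x)$, which is why Kostant's theorem is genuinely deeper than the computations leading to it.
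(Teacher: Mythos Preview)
The paper does not prove this theorem at all: after stating it, it simply cites Kostant's original paper, remarks that the reduction to the semisimple case follows from Proposition~\ref{deco-prop}, and points to alternative proofs in \cite{heinzner-schuetzdeller} and \cite{terng-convexity}. So your proposal is considerably more ambitious than what the paper itself attempts, and there is no in-paper argument to compare against.

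That said, your sketch is sound in outline. The support-function computation for $P\subset\conv(W\cdot x)$ via Lemma~\ref{critical-set} is correct and not circular, since nothing in \S\ref{faces1} relies on Kostant's theorem. Your identification $\OO\cap\lia=W\cdot x$ has a small logical gap: from $y=\Ad(k)x\in\lia$ one cannot conclude directly that $\Ad(k)\lia=\lia$. The standard repair is to observe that $\lia$ and $\Ad(k)\lia$ are both maximal abelian subalgebras of $\liep$ containing $y$, hence are conjugate by some $k'\in (K^y)^0$ (Proposition~\ref{ss-works} applied to $\beta=y$), so that $k'k\in N_K(\lia)$ and $y=\Ad(k')y=\Ad(k'k)x\in W\cdot x$.

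You correctly identify the convexity of $\pi(\OO)$ as the substantive step, and both routes you mention (Kostant's Iwasawa argument, rank-one reduction along root $\Sl(2,\R)$'s) are the standard ones. Since the paper treats this as a black-box citation, your honest acknowledgment that this step lies deeper than the Morse-theoretic material of \S\ref{faces1} is exactly the right assessment.
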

The original proof of Kostant assumes that $G$ is semisimple. One
easily reduces to that case using Proposition \ref{deco-prop}.  The
theorem can be proved within the framework of the gradient momentum
map \cite[Rmk. 5.4]{heinzner-schuetzdeller}.  Another approach is by
observing that the orbits of polar representations are isoparametric
submanifolds.  Terng \cite{terng-convexity} has proved a convexity
theorem for isoparametric submanifolds, which in the case of polar
orbits gives the original statement by Kostant. See also
\cite{palais-terng-LNM}.
The following lemma is a consequence of Kostant convexity theorem.
See \cite[Lemma 7]{gichev-polar} for a proof.
\begin{lemma}
  \label{gichev}
  (i) If $E \subset \liep$ is a $K$-invariant convex subset, then
  $E\cap \liea = \pi (E)$. (ii) If $A \subset \liea$ is a
  $\Weyl$-invariant convex subset, then $K \cdot A $ is convex and
  $\pi (K\cdot A )= A$.
\end{lemma}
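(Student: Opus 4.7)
My plan is to use Kostant's convexity theorem (Theorem \ref{Kostant}) as the sole non-trivial input, handling part (i) directly and then bootstrapping to part (ii).

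For part (i), the inclusion $E \cap \liea \subset \pi(E)$ is immediate since $\pi$ restricts to the identity on $\liea$. For the reverse inclusion, fix $x \in E$. Every $K$-orbit in $\liep$ meets $\liea$, so choose $k \in K$ with $x' := k \cdot x \in \liea$; then $x' \in E$ because $E$ is $K$-invariant. Kostant's theorem applied to the orbit $K \cdot x'$ gives
\begin{gather*}
\pi(K \cdot x') = \conv(W \cdot x').
\end{gather*}
In particular $\pi(x) = \pi(k\meno \cdot x') \in \conv(W \cdot x')$. Since $W \cdot x' \subset K \cdot x' \subset E$ and $E$ is convex, $\conv(W \cdot x') \subset E$, hence $\pi(x) \in E \cap \liea$.

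For part (ii), the identity $\pi(K \cdot A) = A$ follows by inclusion in both directions: $A = \pi(A) \subset \pi(K \cdot A)$ trivially, and conversely for $a \in A$ Kostant gives $\pi(K \cdot a) = \conv(W \cdot a) \subset A$ since $A$ is $W$-invariant and convex. The convexity of $K \cdot A$ is the real content and is where I expect the main subtlety to lie. I would set $E := \conv(K \cdot A)$, which is convex and $K$-invariant, and show $E = K \cdot A$. Linearity of $\pi$ and the previous computation give
\begin{gather*}
\pi(E) = \pi\bigl(\conv(K \cdot A)\bigr) = \conv\bigl(\pi(K \cdot A)\bigr) = \conv(A) = A.
\end{gather*}
Applying part (i) to $E$ yields $E \cap \liea = \pi(E) = A$. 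Now for any $x \in E$, pick $k \in K$ with $k \cdot x \in \liea$; then $k \cdot x \in E \cap \liea = A$, so $x \in K\meno \cdot A = K \cdot A$. This proves $E \subset K \cdot A$, and the reverse inclusion is obvious, so $K \cdot A = E$ is convex.

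The only real obstacle is recognizing that part (i) plus the linearity of $\pi$ is enough to pin down $E \cap \liea$ to be exactly $A$; once that is in hand, the fact that every $K$-orbit meets $\liea$ forces $E$ to be $K$-saturated. No further case analysis or reduction (e.g.\ to the semisimple case) is needed since Theorem \ref{Kostant} has already been stated in the required generality.
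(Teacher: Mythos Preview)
Your argument is correct. The paper does not actually supply a proof of this lemma: it simply notes that the lemma is a consequence of Kostant's convexity theorem and refers the reader to \cite[Lemma~7]{gichev-polar}. Your proof is exactly the kind of direct derivation from Theorem~\ref{Kostant} that the paper alludes to, and each step checks out: for (i) you use that $W$ acts through $N_K(\liea) \subset K$ so that $W\cdot x' \subset E$, and for (ii) you correctly bootstrap via $E := \conv(K\cdot A)$, using linearity of $\pi$ to compute $\pi(E)=A$ and then part (i) to force $E\cap\liea = A$, whence $E = K\cdot A$.
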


\begin{prop} \label{proiezione-intersezione} Let $F $ be a face of
  $\c$. Choose a subalgebra $\lies \subset \liep$ such that $F$ be an
  orbitope of $(G^\lies)^0$. Let $\lia$ be a maximal subalgebra of
  $\liep$ containing $\lies$.  Set $ \sigma : = \pi(\ext F)$.  Then
  $\sigma = \pi(F) = F\cap \lia$ and $\sigma$ is a nonempty face of
  the polytope $\polp$.  If $F$ is proper, then $\sigma$ is proper.
  $F$ is an orbitope of $(G^{\sigma^\perp})^0$, where $\sigma^\perp
  \subset \lia$ denotes the orthogonal to the tangent space of
  $\sigma$.  Moreover $\ext F$ is an orbit of $K^{\sigma^\perp}$ and
  $F=K^{\sigma^\perp}\cd \sigma$.
\end{prop}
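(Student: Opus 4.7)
My plan proceeds in four stages.

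\textbf{Stage one: $\sigma = \pi(F) = F\cap \lia$.} Apply Lemma \ref{gichev}(i) to the compatible subgroup $(G^\lies)^0$ (Lemma \ref{lemcomp}) acting on $\liep^\lies$, in which $\lia$ remains a maximal abelian subalgebra: this gives $\pi(F) = F\cap\lia$. Since $\ext F = (K^\lies)^0\cd y$ is a $(K^\lies)^0$-orbit in $\liep^\lies$, Proposition \ref{ss-works} yields $\ext F \cap \lia \neq \vacuo$, and Kostant's Theorem \ref{Kostant} applied inside $\liep^\lies$ shows $\pi(\ext F) = \conv(W^\lies\cd y')$ for any $y'\in\ext F\cap\lia$; in particular $\pi(\ext F)$ is convex. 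Linearity of $\pi$ gives $\pi(F) = \conv(\pi(\ext F)) = \pi(\ext F)$, and so $\sigma = \pi(\ext F) = \pi(F) = F\cap\lia$.

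\textbf{Stage two: $\sigma$ is a face of $P$, proper if $F$ is.} Specializing stage one to $\lies=\{0\}$ gives $\c\cap\lia = P$; Lemma \ref{micro-convesso} with $M=\lia$ then says $F\cap\lia = \sigma$ is a face of $P$, nonempty by stage one. For properness, the $K$-fixed point $x_0\in\relint\c$ of Lemma \ref{convex-orbit} lies in $\liez(\lieg)\cap\liep$ by Proposition \ref{deco-prop}, and since $x_0$ commutes with all of $\liep$ it must lie in $\lia$. Thus $x_0 = \pi(x_0)\in\relint P$. If $\sigma = P$ then $x_0\in F$; combined with $x_0\in\relint\c$, Theorem \ref{schneider-facce} forces $F = \c$, contradicting properness.

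\textbf{Stage three: reduction $F = F_u(\c)$ and conclusion $F = K^{\sigma^\perp}\cd\sigma$.} Assume $F$ proper, so by stage two $\sigma$ is a proper face of the polytope $P$. Being a face of a polytope, $\sigma$ is exposed; choose $u$ in the relative interior of the normal cone $N(\sigma,P) := \{v\in\lia : \sigma\subset F_v(P)\}\subset\sigma^\perp$, so that $F_u(P) = \sigma$. Because $u\in\lia$, Kostant gives $\max_\c\sx\cd,u\xs = \max_P\sx\cd,u\xs$, whence $F_u(\c)\cap\lia = \sigma$; and for $z\in\ext F$ the identity $\sx z,u\xs = \sx\pi(z),u\xs$ with $\pi(z)\in\sigma$ gives $\ext F\subset\ext F_u(\c)$, hence $F\subset F_u(\c)$. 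Here $F_u(\c)$ is itself an orbitope of the compatible $(G^u)^0$ (Corollary \ref{hesso}) with $\lia$ still maximal abelian. Apply stage two \emph{inside} $F_u(\c)$ --- a move that uses only Lemmas \ref{gichev}, \ref{micro-convesso}, \ref{convex-orbit}, Proposition \ref{deco-prop}, and Theorem \ref{schneider-facce}, hence no circularity: since $F\cap\lia = \sigma = F_u(\c)\cap\lia$ is not a proper face of the inner polytope, $F$ cannot be proper in $F_u(\c)$, i.e.\ $F = F_u(\c)$. Now choose $u$ additionally in the dense open subset of $\sigma^\perp$ on which $K^u = K^{\sigma^\perp}$; such $u$ exist inside $\relint N(\sigma,P)$ because the isotropy $K^v$ for $v\in\sigma^\perp$ depends only on which restricted roots vanish at $v$, giving a finite hyperplane arrangement in $\sigma^\perp$ outside of which $K^v$ equals $K^{\sigma^\perp}$. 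Gichev's Lemma (ii) applied inside $(G^{\sigma^\perp})^0$, with the $W^{\sigma^\perp}$-invariant polytope $\sigma$ (observing $W^{\sigma^\perp}\subset W^u$ preserves $F_u(P) = \sigma$), then yields $F = F_u(\c) = K^u\cd\sigma = K^{\sigma^\perp}\cd\sigma$; correspondingly $\ext F = K^{\sigma^\perp}\cd y'$ and $F\subset\liep^{\sigma^\perp}$.

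The main obstacle is stage three: upgrading the a priori unrelated $(G^\lies)^0$-orbitope structure of Proposition \ref{facciona-orbita} to identification of $F$ with the specific exposed face $F_u(\c)$. The trick is to push all comparisons onto $\lia$ via Kostant and then invoke stage two inside the strictly smaller orbitope $F_u(\c)$. A secondary subtlety is the generic choice of $u\in\sigma^\perp$ making $K^u = K^{\sigma^\perp}$, justified by the standard root-hyperplane stratification of $\lia$.
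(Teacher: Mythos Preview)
Your proof is essentially correct but takes a genuinely different route from the paper's.

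The paper never proves $F = F_u(\c)$ inside this proposition. Instead it applies Proposition~\ref{deco-prop} to the $(K^\lies)^0$-orbit $\ext F$, obtaining a decomposition $\lieg^\lies = \lieg_1 \oplus \lieg_2 \oplus \lieg_3$ with $\ext F = x_0 + K_1\cd x_1$ and affine hull $x_0+\liep_1$. From this it reads off $\sigma^\perp = \lia_2\oplus\liep_3$ directly, and obtains the sandwich $K_1 \subset K^{\sigma^\perp}\subset K^\lies$, which immediately forces $\ext F = K^{\sigma^\perp}\cd x$; the identity $F = K^{\sigma^\perp}\cd\sigma$ then follows from Gichev's lemma applied to $K^\lies$. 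Your route instead first proves exposedness $F = F_u(\c)$ by bootstrapping stages one and two inside the smaller orbitope $F_u(\c)$ (in effect merging this proposition with Theorem~\ref{tutte-esposte}), and then uses a generic choice of $u\in\sigma^\perp$. Your properness argument via the $K$-fixed point $x_0\in\lia\cap\relint\c$ is also different from, and arguably cleaner than, the paper's dimension count via the affine hull.

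One point deserves care: your root-hyperplane argument gives only $\liek^u = \liek^{\sigma^\perp}$, hence $(K^u)^0 = (K^{\sigma^\perp})^0$; the full group equality $K^u = K^{\sigma^\perp}$ is not immediate, since $\liez(\lieg^u)\cap\liep$ may strictly contain $\sigma^\perp$ and an element of $K^u$ need not fix $\sigma^\perp$ pointwise. This is harmless: Corollary~\ref{hesso} already gives $\ext F = (K^u)^0\cd y' = (K^{\sigma^\perp})^0\cd y'$, and since $K^{\sigma^\perp}\subset K^u$ preserves $F$, it follows that $\ext F$ is a $K^{\sigma^\perp}$-orbit and $F = (K^{\sigma^\perp})^0\cd\sigma = K^{\sigma^\perp}\cd\sigma$. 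So your argument goes through with this small adjustment.
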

\begin{proof}
  The set $\ext F$ is an orbit of $(K^\lies)^0$ and $\lia \subset
  \lieg^\lies$.  By Kostant theorem $\pi(\ext F)= \conv(\ext F \cap
  \lia)$ and $\ext F\cap \lia$ is an orbit of the Weyl group
  $W=W(\lieg^\lies, \lia)$.  So $\sigma$ is convex.  Fix $x\in \ext F
  \cap \lia$.  Since $\pi$ is linear, $\pi(F)\subset \conv (\pi(\ext
  F)) =\sigma$. On the other hand $ \ext \sigma \subset \Weyl\cd x =
  (\ext F) \cap \liea$.  Hence $\sigma \subset F\cap \liea$.  And
  obviously $F\cap \liea \subset \pi(F)$. Summing up $\pi(F) \subset
  \sigma \subset F\cap \lia \subset \pi(F)$.  The first assertion is
  proved.  That $\sigma$ is a face of $\polp$ follows directly from
  Lemma \ref{micro-convesso}, while $\sigma=\pi(F) \neq \vacuo$ since
  $F\neq \vacuo$. To check the other assertions observe that $\ext F$
  is an orbit of $(K^\lies)^0$, so that we can apply Proposition
  \ref{deco-prop} to this orbit.  We get a semisimple normal subgroup
  $G_1$ of $ (G^\lies)^0$, a decomposition $\lieg^\lies = \lieg_1
  \oplus \lieg_2 \oplus \lieg_3$ like \eqref{deco-3} and compact
  subgroups $K_1$, $K_2$, $K_3 = Z(K^\lies)^0$ such that
  $(K^\lies)^0=K_1\cd K_2\cd K_3$.  It follows that $\lia = \lia_1
  \oplus \lia_2\oplus \liep_3$, where $\lia_i:=\lia \cap \lieg_i$ is a
  maximal subalgebra of $\liep_i$ for $i=1, 2$.  Moreover $\ext F =
  x_0 + K_1\cd x_1$, the affine hull of $F$ is $x_0 + \liep_1$ and
  $x_0 \in \relint F$.  The restriction of $\pi$ to $\liep_1$ is the
  orthogonal projection $\liep_1\ra \lia_1$ and the affine hull of
  $\sigma $ is $x_0 + \lia_1$.  Hence $\sigma^\perp = \lia_2\oplus
  \liep_3$.  $\lieg_1$ is semisimple and centralizes. Thus $\lies
  \subset \sigma^\perp$, $K^{ \sigma^\perp} \subset K^\lies$ and
  $(K^{\sigma^\perp})^0= K_1 \cd K_3$.  So $K_1 \subset
  K^{\sigma^\perp} \subset K^\lies$ and $ K_1\cd x \subset
  K^{\sigma^\perp} \cd x \subset K^\lies \cd x$.  Since $K_1 \cd x =
  K^\lies \cd x=\ext F$ we get that $\ext F$ is an orbit of
  $K^{\sigma^\perp}$.  But $\ext F$ is connected, so it is also an
  orbit of $(K^{\sigma^\perp})^0$.  Since $\sigma^\perp = \lia_2\oplus
  \liep_3$, $x _ 0 + \liep_1 \subset \liep_3 \oplus \liep_1 =
  \liep^{\sigma^\perp}$. This shows that $F$ is an orbitope of
  $(G^{\sigma^\perp})^0$.  We have to prove that $F = K^{\sigma^\perp}
  \cd \sigma$.  Since $K_2$ acts trivially on $x_0 + \liep_1$,
  $K^{\sigma^\perp} \cd \sigma = K^{\lies} \cd \sigma$. Since $F$ is
  $K^{\lies}$-invariant, we get $K^{\sigma^\perp} \cd \sigma \subset
  F$.  On the other hand $\ext F \subset K^{\lies} \cd \sigma$.  Since
  $\sigma$ is $W$-invariant we can apply Lemma \ref{gichev} (with
  $K=K^\lies$ and $\liep= \liep^\lies$) to get that $K^{\lies} \cd
  \sigma$ is convex. Therefore we get $F = K^{\lies} \cd
  \sigma=K^{\sigma^\perp} \cd \sigma$.  It remains to prove that
  $\sigma $ is proper, when $F$ is proper.  Assume first that the
  affine hull $\c$ is $\liep$. Then the affine hull of $\polp$ is
  $\lia$. If $F$ is proper, then $\lies \neq \{0\}$, so $\lia_1
  \subsetneq \lia$ and $\sigma\subsetneq P$.  In the general case, we
  have to apply Proposition \ref{deco-prop} this time to $\OO$ rather
  than $\ext F$.  $\c$ turns out to be a translate of an orbitope of a
  semisimple subgroup of $G$ by an element of the center of $\lieg$.
  $\lia$ splits into the center of $\lieg$ and a maximal subalgebra of
  the semisimple subgroup. With this we easily reduce to the case we
  have just considered.
\end{proof}

\begin{cor}
  \label{interseco-monotono}
  Let $F_1, F_2$ be  proper faces of $\c$, and let $\lies_1, \lies_2
  \subset \liep$ be subalgebras such that $F_i$ is a  $
  (G^{\lies_i})^0$-orbitope. Assume that $\lia \subset \liep$ is a
  maximal subalgebra containing both $\lies_1$ and $\lies_2$.  If $F_1
  \cap \liea = F_2 \cap \liea$, then $F_1 = F_2$.
\end{cor}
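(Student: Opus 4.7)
The plan is to deduce this corollary directly from Proposition \ref{proiezione-intersezione}, which already does essentially all the work.

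First I would apply Proposition \ref{proiezione-intersezione} to each face $F_i$ with the choice of $\lies_i$ and the ambient maximal subalgebra $\lia$ (which by hypothesis contains $\lies_i$). This yields for each $i$ a nonempty face $\sigma_i := \pi(\ext F_i) = F_i \cap \lia$ of the momentum polytope $P$, together with the identification
\begin{gather*}
  F_i = K^{\sigma_i^\perp} \cd \sigma_i,
\end{gather*}
where $\sigma_i^\perp \subset \lia$ denotes the orthogonal complement of the tangent space to $\sigma_i$ in $\lia$.

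The hypothesis $F_1 \cap \lia = F_2 \cap \lia$ gives immediately $\sigma_1 = \sigma_2$. Denote this common face by $\sigma$. Then $\sigma^\perp$ is the same subspace of $\lia$ in both cases, so $K^{\sigma^\perp}$ is the same group, and consequently
\begin{gather*}
  F_1 = K^{\sigma^\perp} \cd \sigma = F_2,
\end{gather*}
which is the required conclusion.

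There is essentially no obstacle: the whole point of Proposition \ref{proiezione-intersezione} is that the datum $F \cap \lia$, computed inside any maximal subalgebra $\lia$ containing a suitable $\lies$, already determines $F$ via the formula $F = K^{\sigma^\perp}\cd \sigma$. The only thing worth being careful about is that the choice of $\lies_i$ (and hence of the auxiliary Levi-type decomposition used in the proof of Proposition \ref{proiezione-intersezione}) is not unique, but this is irrelevant here: $\sigma^\perp$ is defined purely in terms of $\sigma \subset \lia$, so the reconstruction $F_i = K^{\sigma^\perp}\cd \sigma$ depends only on $\sigma$ and $\lia$, not on the auxiliary $\lies_i$. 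The assumption that $\lia$ contains both $\lies_1$ and $\lies_2$ is what makes Proposition \ref{proiezione-intersezione} applicable to both faces with the same ambient $\lia$, so that $\sigma_1$ and $\sigma_2$ can be compared as subsets of the same polytope $P$.
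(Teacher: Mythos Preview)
Your proof is correct and is exactly the paper's argument: the paper's proof is the one-line ``If $\sigma := F_i \cap \lia$, then $F_1 = K^{\sigma^\perp}\cdot \sigma = F_2$,'' which is precisely what you spell out. Your additional remarks about the irrelevance of the choice of $\lies_i$ are accurate but not needed for the argument.
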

\begin{proof}
  If $\sigma :=F_i \cap \lia$, then $F_1 = K^{\sigma^\perp} \cd
  \sigma=F_2$.
\end{proof}

\begin{teo}
  \label{tutte-esposte}
  All proper faces of $\c$ are exposed.
\end{teo}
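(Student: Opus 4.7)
My plan is to reduce the problem to the momentum polytope, where the analogous statement is trivial, and then lift. Let $F$ be a proper face of $\c$. By Proposition \ref{facciona-orbita} there is an abelian subalgebra $\lies \subset \liep$ making $F$ an orbitope of $(G^{\lies})^0$. Pick a maximal abelian subalgebra $\lia \subset \liep$ containing $\lies$. Proposition \ref{proiezione-intersezione} then tells us that $\sigma := F \cap \lia = \pi(F)$ is a proper face of the polytope $P = \pi(\OO)$.

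Since $P$ is a polytope, every face of $P$ is exposed, so there exists a nonzero $\beta \in \lia$ with $\sigma = F_\beta(P) = \{x \in P : \langle x, \beta\rangle = h_P(\beta)\}$. The key step is to show that this same $\beta$ exposes $F$ in $\c$, i.e. $F_\beta(\c) = F$. For the inclusion of sections into $\lia$, note that $\c \cap \lia = P$ by Lemma \ref{gichev}, and that for $\beta \in \lia$ we have $\langle x, \beta\rangle = \langle \pi(x), \beta\rangle$, hence
\begin{gather*}
h_\c(\beta) = \max_{x \in \OO} \langle \pi(x), \beta\rangle = h_P(\beta).
\end{gather*}
This gives $F_\beta(\c) \cap \lia = F_\beta(P) = \sigma = F \cap \lia$.

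Now I apply Corollary \ref{interseco-monotono} to finish. By Corollary \ref{hesso}, the exposed face $F_\beta(\c)$ is a $(G^\beta)^0$-orbitope, and $\R\beta \subset \lia$; the original $F$ is a $(G^{\lies})^0$-orbitope with $\lies \subset \lia$; so both faces are controlled by subalgebras of the same maximal $\lia$. Both $F$ and $F_\beta(\c)$ are proper: $F$ by hypothesis, and $F_\beta(\c)$ because $F_\beta(\c) \cap \lia = \sigma \subsetneq P = \c \cap \lia$. Corollary \ref{interseco-monotono} then yields $F = F_\beta(\c)$, proving $F$ is exposed.

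The only point that requires care is checking $h_\c(\beta) = h_P(\beta)$ and that both faces satisfy the hypotheses of Corollary \ref{interseco-monotono} simultaneously; the argument is essentially a routine matter of bookkeeping once the right $\beta \in \lia$ has been chosen via the polytope. The real content is Proposition \ref{proiezione-intersezione}, which says that passing to $\lia$ loses no information about $F$, together with the classical fact that polytope faces are exposed.
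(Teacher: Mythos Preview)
Your proof is correct and follows essentially the same approach as the paper's: reduce to the polytope via Propositions \ref{facciona-orbita} and \ref{proiezione-intersezione}, expose $\sigma$ there by some $\beta\in\lia$, check $h_\c(\beta)=h_P(\beta)$ so that $F_\beta(\c)\cap\lia=\sigma$, and then conclude $F=F_\beta(\c)$ via Corollary \ref{interseco-monotono} (using Corollary \ref{hesso} to see that $F_\beta(\c)$ is a $(G^{\R\beta})^0$-orbitope). The only cosmetic differences are your explicit appeal to Lemma \ref{gichev} for $\c\cap\lia=P$ and your explicit verification that $F_\beta(\c)$ is proper; the paper leaves these implicit.
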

\begin{proof}
  Given a proper face $F\subset \c$ choose a subalgebra $\lies \subset
  \liep$ such that $F$ be a $(G^\lies)^0$-orbitope and choose a
  maximal subalgebra $\lia \subset \liep$ containing $\lies$.  By
  Proposition \ref{proiezione-intersezione} $\sigma : = F \cap \liea$ is a
  proper face of $\polp$.  Since all faces of a polytope are exposed
  \cite[p. 95]{schneider-convex-bodies}, there is a vector $\beta\in
  \liea$ such that $ \sigma = F_\beta (\polp)$.  Since $\beta \in
  \liea$ and $P=\pi(\OO)$, $h_\polp(\beta) =\max_{x\in \OO} \sx \beta,
  x\xs = h_\c (\beta)$.  Set $F' : = F_\beta(\c)$.  We wish to show
  that $F=F'$. The inclusion $F \subset F'$ is immediate: if $x \in
  F$, then $\pi(x) \in \sigma$, so $\sx x, \beta \xs = h_P(\beta)=
  h_\c(\beta)$. It is also immediate that $F'\cap \liea = \sigma $.
  So we have two faces $F$ and $F'$ with $F\cap \liea = F'\cap \liea =
  \sigma$.  Set $\lies':=\R \beta \subset \lia$. By Corollary
  \ref{hesso} $F'$ is an orbitope of $(G^{\lies'})^0$.  Applying
  Corollary \ref{interseco-monotono} we get $F=F'= F_\beta (\c)$.
\end{proof}
\begin{cor}
  If $\OO ' \subset \OO$ is a smooth submanifold, then $\conv (\OO')$
  is a face of $\c$ if and only if there is a vector $\beta$ such that
  $\OO'=\ml (\beta)$.
\end{cor}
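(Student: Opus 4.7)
The plan is to reduce both implications to results already developed in the paper. For the $(\Leftarrow)$ direction, suppose $\OO' = \ml(\beta)$. By Corollary \ref{hesso}, $\ml(\beta) = \ext F_\beta(\c)$, and since $F_\beta(\c)$ is a compact convex subset of a finite-dimensional vector space, it is the convex hull of its extreme points (Krein--Milman). Thus $\conv(\OO') = F_\beta(\c)$, which is a face of $\c$.

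For the $(\Rightarrow)$ direction, suppose $F := \conv(\OO')$ is a face of $\c$. If $F = \c$, one may take $\beta = 0$ (or any vector perpendicular to the affine hull of $\OO$), since then $\mup^\beta$ is constant on $\OO$ and $\ml(\beta) = \OO = \OO'$. Otherwise $F$ is a proper face, so Theorem \ref{tutte-esposte} produces a nonzero $\beta \in \liep$ with $F = F_\beta(\c)$. Corollary \ref{hesso} identifies $\ext F = \ml(\beta)$, while Lemma \ref{extO=O} says $\ext F = F \cap \OO$. The inclusion $\OO' \subset \ml(\beta)$ is then immediate, because $\OO' \subset \OO$ and $\OO' \subset F = \conv(\OO')$, and hence $\OO' \subset F \cap \OO = \ml(\beta)$.

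The only substantive step is the reverse inclusion $\ml(\beta) \subset \OO'$. Given $p \in \ml(\beta) = \ext \conv(\OO')$, Carath\'eodory's theorem expresses $p$ as a finite convex combination $\sum_i \lambda_i s_i$ with $s_i \in \OO'$ and $\lambda_i > 0$; the extremality of $p$ in $\conv(\OO')$ forces $s_i = p$ for every $i$, so $p \in \OO'$. This gives $\ml(\beta) \subset \OO'$, and combining the two inclusions yields $\OO' = \ml(\beta)$.

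The smooth submanifold hypothesis on $\OO'$ is not actually used in the argument above; it only restricts attention to a natural class of subsets, and is automatically satisfied on the ``if'' side since $\ml(\beta)$ is a $(K^\beta)^0$-orbit. I do not foresee any real obstacle: the corollary is essentially a repackaging of Theorem \ref{tutte-esposte}, Corollary \ref{hesso} and Lemma \ref{extO=O}, together with the standard fact that the extreme points of a convex hull are contained in the set being hulled.
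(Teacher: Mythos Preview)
Your proof is correct and follows the same route as the paper: both directions reduce to Theorem \ref{tutte-esposte} and Corollary \ref{hesso}. The paper's version is slightly more economical in that it first observes (``as in Lemma \ref{extO=O}'') that since $\OO$ lies on a sphere one has $\ext \conv(\OO') = \OO'$ outright, which packages your Carath\'eodory step and also covers the case $F = \c$ (your assertion $\OO' = \OO$ there is exactly this identity, justified by the same extremality argument you spell out for the proper case).
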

\begin{proof}
  Set $F = \conv (\OO')$.  From the fact that $\OO$ is contained in a
  sphere, it follows as in Lemma \ref{extO=O} that $\ext F =
  \OO'$. Therefore the statement follows immediately from the fact
  that every face of $\c$ is exposed and from Lemma \ref{hesso}.
\end{proof}

\subsection{Faces and parabolic subgroups}
\label{parabolic-section}

In this section we prove Theorem \ref{main-2}, which follows
  from Propositions \ref{QF-para} and \ref{para2} below.  Given a
face $F \subset \c $ set
\begin{gather*}
  % \label{def-hf}
  \HF :=\{ g\in K: gF=F\} = \{ g\in K: g\cd \ext F =\ext F \}\\
  % \label{def-QF}
  Q_F : = \{ g\in G: g\cd \ext F =\ext F \} \qquad
  \CF : = \{ \beta \in \liep: F=F_\beta (\c)\} .
\end{gather*}
Denote by $ C_F^{H_F} $ the vectors of $C_F$ that are fixed by $H_F$.
\begin{prop}
  \label{gruppi-esposte}
  For any face $F$ the set $\ext F$ is an orbit of $\HF$.  If $F$ is
  proper, then $\CFI \neq \vacuo$. For any $\beta \in \CFI$, $\HF =
  K^\beta$ and $F\subset \liep^\beta$.
\end{prop}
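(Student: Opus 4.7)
The plan is to prove the three assertions in the order stated, each essentially by citing results already proved earlier in the excerpt.

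For the first assertion, I would appeal to Proposition~\ref{facciona-orbita}, which provides an abelian subalgebra $\lies\subset\liep$ such that $\ext F$ is an orbit of $(K^\lies)^0$. Since $(K^\lies)^0$ preserves $\ext F$ setwise (being transitive on it), it is contained in $\HF$. Hence $\ext F$, which is already a single orbit of the subgroup $(K^\lies)^0$, is a fortiori a single orbit of the larger group $\HF$.

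For the second assertion, I would first invoke Theorem~\ref{tutte-esposte} to conclude that every proper face $F$ is exposed, so that $\CF$ is a nonempty convex cone in $\liep$. The group $\HF$ is closed in $K$ (the defining condition is clearly closed, and $\ext F=F\cap\OO$ is closed), hence $\HF$ is a compact subgroup of the orthogonal group of $\liep$ preserving both $\c$ and $F$. Lemma~\ref{u-cono} then furnishes a fixed point of $\HF$ inside $\CF$, which is a (necessarily nonzero) element of $\CFI$.

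For the third assertion, fix $\beta\in\CFI$. The condition $\beta\in\CF$ gives $F=F_\beta(\c)$, and Corollary~\ref{hesso} immediately yields both $F\subset\liep^\beta$ and the fact that $\ext F=\ml(\beta)$ is a $K^\beta$-orbit. From the latter, $K^\beta$ preserves $\ext F$, so $K^\beta\subset\HF$; conversely, $\beta\in\CFI$ means every element of $\HF$ fixes $\beta$, hence $\HF\subset K^\beta$. Combining these inclusions gives $\HF=K^\beta$. The argument is largely bookkeeping — no step poses real technical difficulty — and the one genuinely substantial input is the exposedness of every proper face (Theorem~\ref{tutte-esposte}), on which the nonemptiness of $\CF$, and hence the whole second assertion, depends.
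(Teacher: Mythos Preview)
Your proof is correct and follows essentially the same route as the paper's: Proposition~\ref{facciona-orbita} for the first assertion, Lemma~\ref{u-cono} for the second, and Corollary~\ref{hesso} plus the two obvious inclusions for the third. The only difference is that you make the appeal to Theorem~\ref{tutte-esposte} explicit (needed so that $\CF\neq\vacuo$ and Lemma~\ref{u-cono} applies), whereas the paper leaves this implicit.
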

\begin{proof}
  The group $\HF$ is compact. By Proposition \ref{facciona-orbita}
  $\ext F$ is an orbit of some subgroup $K'\subset K$. Hence
  $K'\subset H_F$ and $\ext F$ is an orbit also of $\HF$. It follows
  that $\HF$ preserves both $\c$ and $F$, so by Lemma \ref {u-cono}
  there is a vector $\beta\in \CF$ that is fixed by $\HF$. This proves
  that $\CFI \neq \vacuo$.  On the other hand given any $\beta \in
  C_F^{H_F}$, we have $H_F \subset K^\beta$ and $F=F_\beta(\c)$. By
  Lemma \ref{hesso}, $F \subset \liep^\beta$ and $\ext F =K^\beta \cd
  x$. It follows that $K^\beta \subset \HF$, hence $\HF =K^\beta$.
\end{proof}

\begin{lemma}
  \label{parabolic-included} Let $\lieq_1, \lieq_2$ be subalgebras of
  $\lieg$. Assume that $\lieq_1$ is parabolic, that $\lieq _1 \subset
  \lieq_2$ and that $\lieq_1\cap \liek = \lieq_2 \cap \liek$. Then
  $\lieq_1= \lieq_2$.
\end{lemma}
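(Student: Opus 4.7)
The plan is to use the characterization of parabolic subalgebras as $\lieg^{\beta+}$ from Lemma \ref{para-beta}, together with the Cartan involution $\theta$, to force every element of $\lieq_2$ into nonnegative eigenspaces of $\ad \beta$.

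First, by Lemma \ref{para-beta}, choose $\beta \in \liep$ so that $\lieq_1 = \lieg^{\beta+} = \bigoplus_{\la \geq 0} V_\la(\ad \beta)$. Next I would work out the key identity $\lieq_1 \cap \liek = \liek^\beta$. Since $\beta \in \liep$, a direct computation gives $\theta \circ \ad \beta = - \ad \beta \circ \theta$, so $\theta$ sends $V_\la(\ad \beta)$ to $V_{-\la}(\ad \beta)$. Any element of $\liek$ is fixed by $\theta$, so a nonzero eigenvector of $\ad \beta$ lying in $\liek$ must satisfy $\la = -\la$, i.e.\ $\la = 0$. Thus $\lieq_1 \cap \liek = V_0(\ad\beta) \cap \liek = \liek^\beta$.

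The decisive step is showing that $\lieq_2$ is $\ad\beta$-stable and splits along eigenspaces. Since $\beta \in \lieg^\beta \subset \lieq_1 \subset \lieq_2$ and $\lieq_2$ is a subalgebra, $\ad\beta(\lieq_2) \subset \lieq_2$. The operator $\ad\beta$ is diagonalizable over $\R$ with finitely many distinct eigenvalues, so a standard Vandermonde/polynomial argument yields
\[
\lieq_2 \;=\; \bigoplus_{\la} \bigl(\lieq_2 \cap V_\la(\ad \beta)\bigr).
\]
Now suppose $Y \in \lieq_2 \cap V_\la(\ad \beta)$ with $\la < 0$. Then $\theta Y \in V_{-\la}(\ad\beta) \subset \lier^{\beta+} \subset \lieq_1 \subset \lieq_2$, so $Y + \theta Y \in \lieq_2 \cap \liek$. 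By hypothesis $\lieq_2 \cap \liek = \lieq_1 \cap \liek = \liek^\beta$, so $[\beta, Y+\theta Y] = 0$. Since $[\beta, Y] = \la Y$ and $[\beta, \theta Y] = -\la \theta Y$, we obtain $\la(Y - \theta Y)= 0$, and as $\la \neq 0$ this forces $Y = \theta Y \in \liek \cap V_\la(\ad \beta) = 0$. Hence only nonnegative eigenvalues contribute to $\lieq_2$, giving $\lieq_2 \subset \lieq_1$ and the reverse inclusion is by assumption.

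The only potentially delicate point is the justification that $\lieq_2$ decomposes as a direct sum of its intersections with the eigenspaces of $\ad \beta$; everything else is bookkeeping with the Cartan involution. I expect no serious obstacle, since the $\ad\beta$-stability of $\lieq_2$ together with the diagonalizability of $\ad\beta$ over $\R$ makes this immediate.
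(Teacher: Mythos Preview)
Your proof is correct and follows essentially the same approach as the paper: write $\lieq_1=\lieg^{\beta+}$, use $\beta\in\lieq_2$ to make $\lieq_2$ $\ad\beta$-stable and hence a sum of eigenspace intersections, and for a negative eigenvalue use $\theta$ to produce an element of $\lieq_2\cap\liek=\liek^\beta$ that forces the eigenvector to vanish. Your write-up is in fact slightly more detailed (the Vandermonde justification and the explicit $\liek\cap V_\la=0$ step), but the argument is the same.
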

\begin{proof}
  % The argument is very similar.
  % Since $\lieq_2$ contains $\lieq_1$, also $\lieq_2$ is
  % parabolic. We can therefore assume that $\lieq_1= \lieg^{\beta+} ,
  % \lieq_2=\lieg^{\beta}$. Since $\lieg^\beta = \lieg^{\beta+} \cap
  % \theta \lieg^{\beta+}$ and similarly for $\lieg^\ga$, we have
  % $\lieg^\beta \subset \lieg^\ga$ and $\liek^\beta = \liek^\ga$. By
  % the previous lemma $\lieg^\beta = \lieg^\ga$.
  Assume that $\lieq_1 = \lieg^{\beta+}$ for some $\beta \in \liep$.
  Then $\lieq_1 \cap \liek = \liek^\beta$.  Denote by $V_\la$ the
  eigenspace of $\ad \beta $ with eigenvalue $\la$. Then $\lieq_1 =
  \bigoplus_{\la\in J} V_\la$ where $J$ is the set of nonnegative
  eigenvalues of $\ad \beta$.  Since $\beta \in \lieq_1 \subset
  \lieq_2$, $\lieq_2$ is $\ad\beta$-stable.  We have
  \begin{gather*}
    \lieq_2 = \bigoplus_{\la\in I}\bigl ( V_\la\cap \lieq_2 \bigr)
  \end{gather*}
  for some set of eigenvalues $I$ and we can assume that $V_\la \cap
  \lieq_2 \neq \{0\}$ for every $\la \in I$. We wish to prove that $I
  \subset [0, \infty) $. If not there would be some negative $\la \in
  I$. Pick a nonzero $\xi \in V_\la \cap \lieq_2 $. Then $\theta(\xi)
  \in V_{-\la} \subset \lieq_1 \subset \lieq_2$. So $\xi + \theta(\xi)
  \in \lieq_2 \cap \liek$.  By assumption $\lieq_2\cap
  \liek=\lieq_1\cap \liek = \lieg^{\beta+}\cap\liek=\liek^\beta$. So
  we should have $[\beta, \xi + \theta(\xi)] = 0$, while $[\beta, \xi
  + \theta (\xi) ] = \la (\xi - \theta(\xi)) \neq 0$. The
  contradiction shows that $I \subset[0, \infty)$. So $I \subset J$
  and $\lieq_2 \subset \lieq_1$.
\end{proof}

% Let $\lia\subset \liep$ be a maximal abelian subalgebra. Denote by
% $\roots$ the restricted roots of $(\lieg,\lia)$. Denote by $\simple$
% a set of simple roots and by $\roots_+$ the corresponding positive
% roots. Set
% \begin{gather*}
%   \lien: = \bigoplus_{\la \in \roots_+}\lieg_\la \qquad \liem =
%   \liez_\liek(\lia).
% \end{gather*}
% Given $I \subset \simple$ define
% \begin{gather*}
%   \roots_I := \spam(I) \cap \roots\qquad \
%   \roots_{I,+} := \roots_I \cap \roots_+ \\
%   \lien_I: = \bigoplus_{\la \in \roots_+ -\roots_I}\lieg_\la \\
%   \liep_I := \liem \oplus \lien_I \oplus \lia \oplus \bigoplus_{\la
%   \in \root_I} \lieg_\la.
% \end{gather*}
% Then $\liep_I$ is a parabolic subalgebra and all parabolic
% subalgebras containing $\lia$ are of this form.
%  % These parabolic subalgebras are called \enf{standard}.
% Since $\lieq_1$ is parabolic in $\lieg$ we can find $\lia$ such that
% $\lia \subset \lieq_1$. Since $\lieq_1\subset \lieq_2$ also
% $\lieq_2$ is parabolic.  Therefore there are $I , J \subset \simple$
% such that $\lieq_1 = \liep_I$ and $\lieq_2 = \liep_J$. Then
% \begin{gather*}
%   \lieq_1 \cap \liek = \liem \oplus \Bigl (\bigoplus_ {\la \in
%   \roots_I} \lieg_\la \Bigr) \cap \liek = \liem \oplus \bigoplus_
%   {\la \in \roots_{I,+}} \Bigl ( \lieg_\la \oplus \lieg_{-\la}
%   \Bigr) \cap \liek.
% \end{gather*}
% Since $\lieq_1\cap \liek= \lieq_2\cap \liek$, $\roots_{I,+} =
% \roots_{J, +} $, hence $I=J$ and $\lieq_1 = \lieq_2$.

\begin{prop}\label{QF-para}
  If $F \subset \c$ is a proper face, and $\beta \in C_F^{H_F}$, then
  $Q_F =G^{\beta+}$.
\end{prop}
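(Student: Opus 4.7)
The plan is to prove the two inclusions $G^{\beta+}\subset Q_F$ and $Q_F\subset G^{\beta+}$ separately, the latter by first matching Lie algebras through Lemma \ref{parabolic-included} and then invoking the self-normalizing property of parabolic subgroups. By Proposition \ref{gruppi-esposte} we have $\ext F=\ml(\beta)$, $F\subset\liep^\beta$, and $H_F=K^\beta$; in particular $Q_F\cap K=H_F=K^\beta$.

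For $G^{\beta+}\subset Q_F$, decompose $G^{\beta+}=G^\beta\cdot R^{\beta+}$ and verify that each factor preserves $\ml(\beta)$. For $u\in R^{\beta+}$ and $x\in\ml(\beta)\subset\liep^\beta$, the element $\exp(t\beta)$ fixes $x$, so
\begin{gather*}
\exp(t\beta)\cdot(u\cdot x)=\bigl(\exp(t\beta)\,u\,\exp(-t\beta)\bigr)\cdot x\longrightarrow x\quad(t\to -\infty)
\end{gather*}
by the definition of $R^{\beta+}$. Since $\beta_\OO$ is the gradient of $\mup^\beta$, this function is non-decreasing along the flow $\exp(t\beta)$, so $\mup^\beta(u\cdot x)\geq\mup^\beta(x)=\max_\OO\mup^\beta$ and hence $u\cdot x\in\ml(\beta)$. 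For $g\in G^\beta$, one first checks (in the identification $\OO\cong K\cdot iy\subset Z$, using $\Ad(g)(i\beta)=i\beta$) that $G^\beta$ preserves $\Crit(\mup^\beta)=\OO\cap\liep^\beta$. Then for $y\in\Crit(\mup^\beta)$ and $\xi\in\lieg^\beta$ the curve $\gamma(t)=\exp(t\xi)\cdot y$ lies in $\Crit(\mup^\beta)$, so $\beta_\OO(\gamma(t))=0$ and
\begin{gather*}
\tfrac{d}{dt}\mup^\beta(\gamma(t))=\sx\beta_\OO(\gamma(t)),\xi_\OO(\gamma(t))\xs=0.
\end{gather*}
Thus $\mup^\beta$ is constant on $G^\beta$-orbits through critical points, so $G^\beta$ preserves the level set $\ml(\beta)=\{\mup^\beta=\max\}\cap\Crit(\mup^\beta)$.

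For the reverse inclusion, $\lieg^{\beta+}\subset\Lie(Q_F)$ follows from the first step, while $Q_F\cap K=K^\beta$ yields $\Lie(Q_F)\cap\liek=\liek^\beta$. The equality $\lieg^{\beta+}\cap\liek=\liek^\beta$ also holds, since $\theta$ exchanges $V_\lambda(\ad\beta)$ with $V_{-\lambda}(\ad\beta)$ and a $\theta$-fixed vector supported on nonnegative eigenspaces must lie in $V_0=\lieg^\beta$. Lemma \ref{parabolic-included} applied to $\lieq_1=\lieg^{\beta+}\subset\lieq_2=\Lie(Q_F)$ then forces $\Lie(Q_F)=\lieg^{\beta+}$. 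Since $Q_F$ normalizes its own Lie algebra, $Q_F\subset N_G(\lieg^{\beta+})=G^{\beta+}$, the last equality combining the paper's definition of parabolic subgroup with the lemma identifying $G^{\beta+}$ as one. The main obstacle is proving that all of $G^\beta$ (not only $K^\beta$) preserves $\ml(\beta)$: the compact factor is immediate from $K$-invariance of the inner product, but $\exp(\liep^\beta)$ acts through the holomorphic extension of the $K$-action, and the only leverage is the vanishing of $\operatorname{grad}\mup^\beta=\beta_\OO$ on the critical set.
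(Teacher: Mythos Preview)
Your proof is correct, and the reverse inclusion $Q_F\subset G^{\beta+}$ follows the paper's argument exactly (Lemma \ref{parabolic-included} plus self-normalization). The forward inclusion $G^{\beta+}\subset Q_F$, however, uses a genuinely different mechanism.

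The paper proves $G^{\beta+}\subset Q_F$ locally: since $K^\beta$ already preserves $\ext F$ and meets every connected component of $G^{\beta+}$, it suffices to show that $\xi_\OO$ is tangent to $\ext F$ for every $\xi\in\lieg^{\beta+}$. This is read off from the Hessian decomposition of Proposition \ref{tangent}: at a maximum point $V_+=\{0\}$, so $d\alfa_e(\lieg^{\beta+})\subset V_0=T_x\ext F$. Your argument instead works globally and group-theoretically, treating the two factors of $G^{\beta+}=G^\beta\cdot R^{\beta+}$ separately: for $R^{\beta+}$ you exploit the limit $\exp(t\beta)u\exp(-t\beta)\to e$ together with monotonicity of $\mup^\beta$ along the gradient flow; for $G^\beta$ you use that it commutes with the flow, hence preserves $\Crit(\mup^\beta)$, and then that $\mup^\beta$ is constant along $G^\beta$-orbits inside the critical set because the gradient vanishes there. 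Your approach has the advantage of bypassing the Morse--Bott analysis entirely and never needing the reduction to the identity component; the paper's approach, on the other hand, handles all of $\lieg^{\beta+}$ in one stroke and ties the face structure directly to the Hessian eigenspace picture developed earlier.
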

\begin{proof}
  We prove first that $G^{\beta +} \subset Q_F$, i.e. that
  $G^{\beta+}$ preserves $\ext F$. Since $\beta \in C_F^{H_F}$,
  $H_F=K^\beta$.  In general $G^{\beta+}$ will not be connected.
  Nevertheless $K \cap G^{\beta+}=K^\beta$ meets all components of $
  G^{\beta+}$.  By Proposition \ref{gruppi-esposte} $ K^\beta =H_F
  \subset Q_F$. So it is enough to prove that $(G^{\beta+})^0 \subset
  Q_F$. This amounts to showing that for any $\xi \in \lieg^{\beta+}$
  the vector field $\xi_\OO$ is tangent to $\ext F$.  Fix an arbitrary
  $x\in \ext F$.  Since $F=F_\beta(\c)$, $\ext F = \ml(\beta)$, so $x$
  is a maximum point of $\mup^\beta$. Hence $V_+=\{0\}$ in
  \eqref{Dec-tangente}. By Proposition \ref{tangent}
  $d\alfa_e(\lieg^{\beta+} )= d\alfa_e(\lieg^\beta) +
  d\alfa_e(\lier^\beta_+) \subset V_0 + V_+ = V_0$. Hence for any $\xi
  \in \lieg^{\beta+}$, $\xi_\OO(x) = d\alfa_e(\xi) \in V_0= T_x \ext
  F$.  Thus we proved that $G^{\beta + } \subset Q_F$. We also know
  that $G^{\beta+} \cap K = K^\beta = H_F = Q_F\cap K$.  Also, $Q_F
  \subset G$ is a closed subgroup, hence a Lie subgroup.  Thus we can
  apply Lemma \ref{parabolic-included} to the Lie algebras of
  $G^{\beta+}$ and $Q_F$ respectively, and we obtain $\lieg^{\beta+} =
  \lieq_F$. Therefore $Q_F \subset N_G(\lieq_F) = G^{\beta+}$.  And
  thus the theorem is proved.
\end{proof}

% \begin{teo}
%   Let $Q$ be a parabolic subgroup of $G$ and let $\OO'\subset \OO$
%   be a closed orbit of $Q$. Then there is a unique face $F$ of $\c$
%   such that $\OO'=\ext F$ and $Q=Q_F$.
% \end{teo}
% \begin{proof}
%   We can choose $\beta \in \liep$ such that $Q=G^{\beta+}$. (This is
%   easily verified using restricted roots.)  Then $K^\beta = Q\cap
%   K$. Let $x \in \OO'$ be a maximum point of $\mup^\beta$ over $
%   \OO'$. Since the gradient of $\mup^\beta $ at $x$ is $
%   \beta_\OO(x)$ and $\beta \in \lieg^{\beta+}$, we get $\beta_\OO(x)
%   = 0$. By Proposition \ref{tangent} $d\alfa_e(\lieg^{\beta+}) =
%   V_0\oplus V_+$. Then $V_+ \subset T_x(G^{\beta+} \cd x ) =
%   T_x\OO'$. Since $x$ is a maximum point of $\mup^\beta $ over
%   $\OO'$, we conclude that $V_+=\{0\}$.  Thus $x$ is a local maximum
%   point of $\mup^\beta$.  But $\mup^\beta$ has only global maxima,
%   hence $x\in \ml(\mup^\beta)$.  Set $F=F_\beta(\c)$. By the
%   previous theorem $G^{\beta+}=Q_F$, so $\OO'=Q_F\cd x= \ext F$.
% \end{proof}

% \change{We are now able to prove the connection between parabolic
%   subgroups and faces stated in the introduction.}
\begin{prop}
  \label{para2}
  The set $\{\ext F: F $ a nonempty face of $\c \}$ coincides with the
  set of all closed orbits of parabolic subgroups of $G$.
Any parabolic subgroup $Q \subset G$ has a unique
  closed orbit, which equals the set of extreme points of a unique
  face of $F \subset \c$. If $Q=G^{\beta+}$, then $F=F_\beta(\c)$.
\end{prop}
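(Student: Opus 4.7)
The plan is to split the proof into two inclusions followed by a uniqueness argument. The first direction is essentially a corollary of \S\ref{parabolic-section}. Given a nonempty face $F \subset \c$: if $F = \c$ then $\ext F = \OO = G \cd x$ is the unique closed orbit of the parabolic subgroup $G$ itself. If $F$ is proper, Proposition \ref{gruppi-esposte} yields some $\beta \in C_F^{H_F}$; Proposition \ref{QF-para} then identifies $Q_F = G^{\beta+}$ as parabolic, and $\ext F = K^\beta \cd x = \ml(\beta)$ is a compact $K^\beta$-orbit, hence a closed $Q_F$-orbit.

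For the converse, I would fix a parabolic $Q\subset G$, invoke Lemma \ref{para-beta} to write $Q = G^{\beta+}$, and set $F := F_\beta(\c)$; by Corollary \ref{hesso}, $\ext F = \ml(\beta)$. To see that $\ml(\beta)$ is a closed $Q$-orbit, note that at any $x \in \ml(\beta)$ the decomposition \eqref{Dec-tangente} has $V_+ = 0$, so Proposition \ref{tangent} gives $d\alfa_e(\lieg^{\beta+}) = V_0 = T_x \ml(\beta)$ (the latter by Morse-Bott, Corollary \ref{MorseBott}). Hence $(G^{\beta+})^0 \cd x$ is open in the connected compact set $\ml(\beta)$ and, being closed as a compact orbit, fills it. Since $K^\beta$ meets every component of $Q = G^\beta \cd R^{\beta+}$ (as $R^{\beta+}$ and $\exp \liep^\beta$ are connected) and already acts transitively on $\ml(\beta)$, the whole group $Q$ does as well.

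The main obstacle is uniqueness. Given any closed $Q$-orbit $Q \cd y$, I would show it must coincide with $\ml(\beta)$. First, $\mupb$ attains its maximum $M$ on the compact set $Q \cd y$ at some $z_0$; since $\beta_\OO = \operatorname{grad}\mupb$ is tangent to $Q\cd y$ (because $\exp(t\beta) \in Q$) and $t\mapsto \mupb(\exp(t\beta)\cd z_0)$ is non-decreasing yet bounded by $M$, it is constant, forcing $|\beta_\OO(z_0)|^2 = 0$ and hence $z_0 \in \Crit(\mupb)$. To upgrade from critical point to global maximum, I would exploit the unipotent radical: for $\xi \in \lier^{\beta+}$ the curve $\exp(t\xi) \cd z_0$ stays in $Q \cd y$, and a Taylor expansion at $z_0$ (where $\nabla\mupb = \beta_\OO(z_0) = 0$) gives
\[
\mupb(\exp(t\xi) \cd z_0) = M + \tfrac{t^2}{2}\bigl(D^2\mupb(z_0) \xi_\OO(z_0),\, \xi_\OO(z_0)\bigr) + O(t^3).
\]
Since $\xi_\OO(z_0) \in V_+$ by Proposition \ref{tangent}, and the Hessian is strictly positive on $V_+$, any nonzero $\xi_\OO(z_0)$ would push $\mupb$ above $M$ nearby, contradicting maximality. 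Hence $V_+ = d\alfa_e(\lier^{\beta+}) = 0$, so $z_0$ is a local maximum of $\mupb$, and by the argument proving Proposition \ref{massimo-connesso} the set of local maxima equals $\ml(\beta)$. Therefore $Q \cd y$ meets $\ml(\beta)$, which is itself a $Q$-orbit, so the two coincide.

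Uniqueness of the face $F$ then follows from $F = \conv \ext F$, and the final clause $Q = G^{\beta+} \Rightarrow F = F_\beta(\c)$ is built into the construction. I expect the unipotent-radical step to be the most delicate, since it is where the algebraic decomposition $\lieg^{\beta+} = \lieg^\beta \oplus \lier^{\beta+}$ must be matched precisely against the analytic Morse-Bott decomposition \eqref{Dec-tangente} at a critical point that is \emph{not} assumed to be a maximum.
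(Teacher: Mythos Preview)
Your argument is correct and follows essentially the same strategy as the paper: take a maximum of $\mu_\liep^\beta$ on a given closed $Q$-orbit, show it is a critical point, use Proposition \ref{tangent} to force $V_+=0$, and conclude via Proposition \ref{massimo-connesso} that the point lies in $\ml(\beta)$. The paper's version is slightly leaner---it observes directly that $V_+\subset T_x\OO'$ and that the Hessian is negative semidefinite on $T_x\OO'$ rather than writing out a Taylor expansion, and it omits your separate verification that $\ml(\beta)$ is a $Q$-orbit (this already follows from Proposition \ref{QF-para}, which you cite)---but the substance is identical.
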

\begin{proof}
  Let $Q \subset G$ be parabolic. There is at least one closed orbit
  since the action is algebraic.  Choose $\beta \in \liep$ such that
  $Q=G^{\beta+}$.  Then $K^\beta = Q\cap K$. Let $\OO'$ be any closed
  orbit of $Q$ and let $x \in \OO'$ be a maximum point of $\mup^\beta$
  over $ \OO'$. Since the gradient of $\mup^\beta $ at $x$ is $
  \beta_\OO(x)$ and $\beta \in \lieg^{\beta+}$, we get $\beta_\OO(x) =
  0$. By Proposition \ref{tangent} $d\alfa_e(\lieg^{\beta+}) =
  V_0\oplus V_+$, so $V_+ \subset T_x(G^{\beta+} \cd x ) =
  T_x\OO'$. Since $x$ is a maximum point of $\mup^\beta $ over $\OO'$,
  we conclude that $V_+=\{0\}$.  Thus $x$ is a local maximum point of
  $\mup^\beta$ and $R^{\beta+}$ acts trivially on $\OO'$.  But
  $\mup^\beta$ has only global maxima, hence $x\in \ml(\beta)$ and
  $\OO'=G^\beta\cd x = K^\beta \cd x=\ml(\beta)$.  Set
  $F=F_\beta(\c)$.  Then $\OO'=\ext F$. This proves that the closed
  orbit is unique.
\end{proof}

\begin{cor} \label{includiamo}
For any face $F$ we have
$    \CFI = \{\beta \in \liep: G^{\beta+} = Q_F\}$.
\end{cor}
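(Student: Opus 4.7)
The plan is to verify the two inclusions of this set equality separately. The inclusion $\CFI \subseteq \{\beta \in \liep: G^{\beta+} = Q_F\}$ is exactly the statement of Proposition \ref{QF-para} above. So the substance of the proof is in the reverse inclusion.

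For the reverse inclusion, assume $\beta \in \liep$ satisfies $G^{\beta+} = Q_F$. I would first show that $\beta \in C_F$, i.e. that $F = F_\beta(\c)$. The key observation is that $\ext F$ is a closed $Q_F$-orbit in $\OO$: by Proposition \ref{gruppi-esposte} it is already a single orbit of $\HF$, and $\HF \subset Q_F$ while $Q_F$ stabilizes $\ext F$ by definition, so $\ext F$ is a $Q_F$-orbit; it is compact because $F$ is a closed subset of $\c$ and $\ext F = F \cap \OO$. Now apply Proposition \ref{para2} to the parabolic subgroup $Q_F = G^{\beta+}$: its unique closed orbit in $\OO$ is $\ext F_\beta(\c)$. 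Comparing, $\ext F = \ext F_\beta(\c)$, and since a face is the convex hull of its extreme points, $F = F_\beta(\c)$.

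It remains to verify that $\beta$ is fixed by $\HF$. Here I would use the equality $\HF = Q_F \cap K$, which is immediate from Lemma \ref{ext-facce} together with the definitions of $\HF$ and $Q_F$, combined with the identity $G^{\beta+} \cap K = K^\beta$ already noted in the proof of Proposition \ref{QF-para}. This yields $\HF = K^\beta$, and since $K^\beta$ centralizes $\beta$, we conclude $\beta \in \liep^{\HF}$, hence $\beta \in \CFI$.

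The main potential obstacle is identifying $\ext F$ as a single $Q_F$-orbit rather than as a union of orbits; this is resolved cleanly once one knows that $\HF$ already acts transitively on $\ext F$, which is provided by Proposition \ref{gruppi-esposte}. Everything else in the argument is an application of the two previously proven propositions \ref{QF-para} and \ref{para2} in the direction reciprocal to how they were originally used.
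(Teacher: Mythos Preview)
Your proposal is correct and follows essentially the same approach as the paper's proof: both use Proposition~\ref{QF-para} for the forward inclusion and Proposition~\ref{para2} for the reverse, together with the identity $H_F = Q_F \cap K = G^{\beta+}\cap K = K^\beta$. You have simply made explicit the step that $\ext F$ is the unique closed $Q_F$-orbit (via transitivity of $H_F$ from Proposition~\ref{gruppi-esposte}), which the paper leaves implicit when it invokes ``the previous Theorem''; the reference to Lemma~\ref{ext-facce} is unnecessary since $H_F = Q_F \cap K$ is immediate from the definitions, but this is harmless.
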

\begin{proof}
  By Proposition \ref{QF-para} the set on the left is included in the set
  on the right.  Conversely, if $\beta$ is in the set on the right,
  then $\beta \in C_F$ with $F=F_\beta(\c)$, by the previous
  Theorem. Since $H_F=Q_F\cap K =G^{\beta+}\cap K = K^\beta$, $\beta$
  is also fixed by $H_F$.
\end{proof}

If $F$ is a proper face set
\begin{gather}
  \lies_F:=\spam(\CFI) \qquad G_F:=Q_F \cap \theta (Q_F).
\end{gather}
If $\beta \in \CFI$, then $G_F:=G^\beta$.
\begin{cor}\label{sf-zg}
  $ \lies_F$ is an abelian subalgebra of $\liep$ and $\lies_F =
  \liez(\lieg_F) \cap \liep$.
  % \lies_F = \liez(\lieq_F\cap\theta\lieq_F)\cap \liep.
\end{cor}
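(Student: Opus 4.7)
The plan is to deduce this from Corollary~\ref{includiamo} combined with Proposition~\ref{para-beta}, which together pin down $\CFI$ as an open convex cone inside $\liez(\lieg_F)\cap\liep$.

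First I would rewrite $\CFI$ using Corollary~\ref{includiamo}: it is exactly the set $\{\beta\in\liep:G^{\beta+}=Q_F\}$. Since $Q_F$ is a parabolic subgroup of $G$ (by Proposition~\ref{QF-para}), and since parabolic subgroups coincide with the normalizers of their Lie algebras, the condition $G^{\beta+}=Q_F$ is equivalent to $\lieg^{\beta+}=\lieq_F$. Moreover, passing to Lie algebras the definition $G_F=Q_F\cap\theta(Q_F)$ gives $\lieg_F=\lieq_F\cap\theta\lieq_F$ (and one checks directly from $\theta V_\la(\ad\beta)=V_{-\la}(\ad\beta)$ that for any $\beta\in\CFI$ this equals $\lieg^\beta$, consistently with the remark preceding the corollary).

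Next I would invoke Proposition~\ref{para-beta} applied to the parabolic subalgebra $\lieq_F$: the set $\{\beta\in\liep:\lieg^{\beta+}=\lieq_F\}$ is a nonempty open convex cone in $\liez(\lieq_F\cap\theta\lieq_F)\cap\liep=\liez(\lieg_F)\cap\liep$. Therefore
\[
\CFI\subset \liez(\lieg_F)\cap\liep
\]
as an open subset. Since $\liez(\lieg_F)$ is by definition abelian, the subspace $\liez(\lieg_F)\cap\liep$ is abelian; hence its span $\lies_F=\spam(\CFI)$ is an abelian subspace of $\liep$, which (being abelian) is automatically a Lie subalgebra.

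For the reverse inclusion I would use the elementary fact that any nonempty open subset $\Omega$ of a finite-dimensional real vector space $V$ satisfies $\spam(\Omega)=V$: choose $p\in\Omega$, then a small ball $p+B_\eps\subset\Omega$ contains a basis of $V$, so $\spam(\Omega)\supset\spam(p+B_\eps)=V$. Applying this to $\Omega=\CFI$ inside $V=\liez(\lieg_F)\cap\liep$ yields $\lies_F=\liez(\lieg_F)\cap\liep$, completing the proof. I do not foresee a serious obstacle; the only point requiring a moment's care is the identification of $\lieq_F\cap\theta\lieq_F$ with $\lieg_F$ at the Lie-algebra level, which is immediate from the group-level definition of $G_F$ and the compatibility of $\theta$ with passage to Lie algebras.
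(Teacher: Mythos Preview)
Your proof is correct and follows essentially the same approach as the paper: the paper's proof is just the terse sentence ``$\lies_F$ is the span of $\CFI$ and $\lieg_F = \lieq_F \cap \theta \lieq_F$; thus the result follows from Corollary~\ref{includiamo} and Lemma~\ref{para-beta}'', and you have simply unpacked why it follows, including the observation that a nonempty open subset of a finite-dimensional vector space spans it.
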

\begin{proof}
  $\lies_F$ is the span of $\CFI$ and $\lieg_F = \lieq_F \cap \theta
  \lieq_F$. Thus the result follows from Corollary \ref{includiamo} and Lemma
  \ref{para-beta}.
\end{proof}
\begin{cor}\label{beta-regular}
  $H_F = K^{\lies_F}$ and $G_F = G^{\lies_F}$.
\end{cor}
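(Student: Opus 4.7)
The plan is to exploit the fact, already established, that both $H_F$ and $G_F$ admit uniform descriptions as stabilizers/centralizers of an arbitrary single element of $\CFI$, and then to observe that $\lies_F$, being spanned by $\CFI$, has the same centralizer as $\CFI$ itself.

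First I would unwind the definition $\lies_F = \spam(\CFI)$ to get the identities
\begin{gather*}
  K^{\lies_F} = \bigcap_{\beta \in \CFI} K^\beta, \qquad G^{\lies_F} = \bigcap_{\beta \in \CFI} G^\beta,
\end{gather*}
since an element of $K$ (resp.\ $G$) commutes with every vector in a subspace if and only if it commutes with every vector in a spanning set.

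Next I would invoke the results already proven to show that each factor in these intersections is constant in $\beta$. By Proposition \ref{gruppi-esposte}, for every $\beta \in \CFI$ one has $K^\beta = H_F$; this immediately gives $K^{\lies_F} = H_F$. For the other identity, the remark preceding Corollary \ref{sf-zg} records that for any $\beta \in \CFI$ one has $G_F = G^\beta$ (this is the Levi decomposition $G^{\beta+} \cap \theta G^{\beta+} = G^\beta$ combined with $Q_F = G^{\beta+}$ from Corollary \ref{includiamo}). Hence $G^\beta$ is independent of $\beta \in \CFI$ and equal to $G_F$, so $G^{\lies_F} = G_F$.

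There is no real obstacle here; the corollary is essentially a bookkeeping consequence of Proposition \ref{gruppi-esposte}, Corollary \ref{includiamo} and the structural observation $G_F = G^\beta$ for $\beta \in \CFI$. The only thing to be careful about is to make explicit that centralizing the span of a set is the same as centralizing the set, which justifies reducing to the already-known pointwise statements.
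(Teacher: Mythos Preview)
Your proof is correct, and it takes a somewhat different route from the paper's. The paper argues via root data: by Corollary~\ref{includiamo} and the description~\eqref{Omegalia} in the proof of Lemma~\ref{para-beta}, every $\beta \in \CFI$ is \emph{regular} in $\lies_F = \lia_I$, meaning that any restricted root vanishing on $\beta$ vanishes on all of $\lies_F$; this gives $\lieg^\beta = \lieg^{\lies_F}$ and hence $K^\beta = K^{\lies_F}$, $G^\beta = G^{\lies_F}$ for each individual $\beta$. You bypass the root-system computation entirely by writing $K^{\lies_F}$ and $G^{\lies_F}$ as intersections over $\CFI$ of groups already shown (Proposition~\ref{gruppi-esposte} and the remark before Corollary~\ref{sf-zg}) to be constant in $\beta$. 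Your argument is cleaner and more self-contained for the corollary as stated; the paper's phrasing in terms of regularity is reused verbatim in the proof of the subsequent corollary (where one needs $\lieg^\beta = \lieg^{\lies_F}$ for a single $\beta$), though of course your identities $G_F = G^{\lies_F}$ and $G_F = G^\beta$ combine to give the same thing.
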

\begin{proof}
  It follows from the discussion in the proof of Lemma
  \ref{para-beta}, that the vectors of $\CFI$ are regular in
  $\lies_F=\lia_I$, i.e. if a root vanishes on $\beta \in \CFI$, then
  it vanishes on the whole of $\lies_F$. Thus $K^{\lies_F} = K^\beta$
  and $G^{\lies_F} = G^\beta$.
\end{proof}
\begin{cor}\label{GF-orbitope}
  The face $F$ is an orbitope of $G_F^0$.
\end{cor}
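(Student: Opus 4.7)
The plan is to verify the definition of an orbitope for $G_F^0$ by assembling the previous corollaries. It suffices to treat proper $F$, since if $F=\c$ then $\lies_F=\{0\}$, $G_F=G$, and $\c$ is trivially an orbitope of $G$.

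First I would pick a vector $\beta\in\CFI$, which is nonempty by Proposition \ref{gruppi-esposte}. Corollary \ref{hesso} tells me that $\ext F = \ml(\beta)$ is a $(K^\beta)^0$-orbit and that $F\subset\liep^\beta$. By Corollary \ref{beta-regular}, $K^\beta = K^{\lies_F} = H_F$ and $G^\beta = G^{\lies_F} = G_F$. The regularity argument behind Corollary \ref{beta-regular}---that any restricted root vanishing on $\beta$ vanishes on the whole of $\lies_F$---also gives $\lieg^\beta=\lieg^{\lies_F}$, and intersecting with $\liep$ yields $\liep^\beta=\liep^{\lies_F}$. Hence $F\subset\liep^{\lies_F}$ and $\ext F$ is an orbit of $(K^{\lies_F})^0$.

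Next, by Lemma \ref{lemcomp} \ref{lemcomp3} the group $G_F=G^{\lies_F}$ is compatible, with Cartan decomposition $G_F = K^{\lies_F}\cd\exp(\liep^{\lies_F})$. Since $\exp(\liep^{\lies_F})$ is connected and the Cartan map is a diffeomorphism, $G_F$ has the same number of connected components as $K^{\lies_F}$, so $G_F^0 = (K^{\lies_F})^0 \cd \exp(\liep^{\lies_F})$. Its maximal compact subgroup is $(K^{\lies_F})^0$, and the $\liep$-part of its Lie algebra is $\liep^{\lies_F}$.

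Putting these pieces together, $F = \conv\bigl((K^{\lies_F})^0\cd x\bigr)$ for any $x\in\ext F$, which is the convex hull of an orbit of the maximal compact subgroup of $G_F^0$ lying in the $\liep$-part of $G_F^0$. By definition this makes $F$ an orbitope of $G_F^0$. There is no real obstacle: the corollary is essentially the consolidation of Corollaries \ref{hesso} and \ref{beta-regular} with Lemma \ref{lemcomp} \ref{lemcomp3}.
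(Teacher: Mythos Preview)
Your proof is correct and follows essentially the same approach as the paper: pick $\beta\in\CFI$, invoke Corollary~\ref{hesso} to see that $F\subset\liep^\beta$ with $\ext F$ a $(K^\beta)^0$-orbit, and then identify $(G^\beta)^0$ with $G_F^0$. The paper's argument is simply the one-liner ``If $\beta\in\CFI$, then $F$ is a $(G^\beta)^0$-orbitope by Corollary~\ref{hesso}'', using the identification $G_F=G^\beta$ stated immediately before Corollary~\ref{sf-zg}; your version routes through Corollary~\ref{beta-regular} and spells out the compatibility of $G_F$, which is more detail than needed but entirely valid.
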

\begin{proof}
  If $\beta \in \CFI$, then $F$ is a $(G^\beta)^0$-orbitope by
  Corollary \ref{hesso}.
\end{proof}
\begin{cor}
  Let $F$ be a face and let $\lia \subset \liep$ be a maximal
  subalgebra. Then $\CFI\cap \lia \neq \vacuo$ if and only if $\CFI \subset \lia$
  if and only if $\lia \subset \lieg_F$.
\end{cor}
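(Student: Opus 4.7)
My plan is to establish the three equivalences by proving the cycle $\CFI \cap \lia \neq \vacuo \Rightarrow \lia \subset \lieg_F \Rightarrow \CFI \subset \lia \Rightarrow \CFI \cap \lia \neq \vacuo$. The first implication is immediate from Corollary \ref{beta-regular}: if $\beta \in \CFI \cap \lia$, then $\lieg_F = \lieg^\beta$, and since $\lia$ is abelian and contains $\beta$, every element of $\lia$ commutes with $\beta$, so $\lia \subset \lieg^\beta = \lieg_F$. The last implication is also immediate: since $C_F$ contains a fixed point of the compact group $H_F$ by Lemma \ref{u-cono} (or Proposition \ref{gruppi-esposte} for proper $F$, and trivially taking $\beta = 0$ when $F = \c$), $\CFI$ is nonempty, and if $\CFI \subset \lia$ then $\CFI \cap \lia = \CFI \neq \vacuo$.

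The main content is the middle implication. Assume $\lia \subset \lieg_F$. By Corollary \ref{sf-zg}, $\lies_F = \liez(\lieg_F) \cap \liep$, so in particular $\lies_F$ is an abelian subspace of $\liep$ that commutes with every element of $\lieg_F$, and in particular with $\lia$. Thus $\lies_F + \lia$ is an abelian subspace of $\liep$ containing $\lia$. By maximality of $\lia$, we get $\lies_F \subset \lia$. Since $\CFI \subset \spam(\CFI) = \lies_F$ by definition of $\lies_F$, we conclude $\CFI \subset \lia$.

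I do not anticipate any real obstacle: all of the needed structural facts about $\CFI$, $\lies_F$, and $\lieg_F$ are already packaged in Corollaries \ref{sf-zg} and \ref{beta-regular}, and the only nontrivial ingredient is the maximality of $\lia$ used in the step $\lia \subset \lieg_F \Rightarrow \lies_F \subset \lia$.
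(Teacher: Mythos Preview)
Your argument is correct and essentially the same as the paper's. The paper phrases things via the intermediate condition $\lies_F \subset \lia$ (using that any $\beta \in \CFI$ is regular in $\lies_F$ so $\lieg^\beta = \lieg^{\lies_F}$, and then maximality of $\lia$), but this is exactly what you do in your middle implication, just with the steps reordered; the only minor quibble is that $\lies_F$, $G_F$, and $\CFI$ are defined only for proper faces, so the aside about $F = \c$ and $\beta = 0$ is unnecessary.
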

\begin{proof}
  If $\beta \in \CFI\cap \lia$, then $[\beta, \lia]=0$. Since $\beta$
  is regular in $\lies_F$, we get $\lies_F\subset \lia$. Conversely,
  if $\lies_F\subset \lia$, then $\CFI\subset \lia$. Since $\lieg_F =
  \lieg^{\lies_F}$ the condition $\lies_F\subset \lia$ is equivalent
  to $\lia \subset \lieg_F$.
\end{proof}

\subsection{Proof of Theorem \ref{main}}
\label{polysection}

Fix a maximal subalgebra $\lia \subset \liep$.  Denote by $\faces$ the
set of proper faces of $\OO$ and by $\facesp$ the set of proper faces
of the polytope $P$.  If $F$ is a face of $\OO$ and $a\in K$, then
$a\cdot F$ is still a face, so $K$ acts on $\faces$.  Similarly
$\Weyl=W(\lieg,\lia)$ acts on $\facesp$.  We wish to show that $\faces
/ K \cong \facesp /W$.

\begin{lemma} \label{facce-1} For every face of $\c$ there is $a\in K$
  such that $\lies_{a\cd F} \subset \lia$. The face
  % $a\cd F = F_\beta (\c)$ for some $\beta \in \lia$.  $\lia \subset
  % H_{aF}$ $ a \cd \CFI \subset \lia$. The face
  $a\cd F$ is unique up to $N_K(\liea)$.
\end{lemma}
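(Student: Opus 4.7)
The plan is to prove existence by lifting a maximal abelian subalgebra containing $\lies_F$ to $\lia$ via an element of $K$, and to prove uniqueness by applying Kostant's convexity theorem to a single sufficiently regular vector inside $\lies_{a\cdot F}$.

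For existence, recall from Corollary \ref{sf-zg} that $\lies_F$ is an abelian subalgebra of $\liep$, so it extends to a maximal abelian subalgebra $\liea'\subset \liep$. Since any two maximal abelian subalgebras of $\liep$ are $K$-conjugate, there is $a\in K$ with $\Ad(a)(\liea')=\lia$. The construction $F\mapsto \lies_F$ is $K$-equivariant: from $F_\beta(\c)=a\meno\cdot F_{\Ad(a)\beta}(\c)$ one obtains $C_{a\cdot F}=\Ad(a)(C_F)$ and $H_{a\cdot F}=aH_Fa\meno$, and hence $\lies_{a\cdot F}=\Ad(a)(\lies_F)\subset \Ad(a)(\liea')=\lia$.

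For uniqueness, assume $a_1,a_2\in K$ both satisfy the condition and set $F_i:=a_i\cdot F$, $b:=a_2 a_1\meno$, so that $F_2=b\cdot F_1$ and $\Ad(b)(\lies_{F_1})=\lies_{F_2}$. By Proposition \ref{gruppi-esposte} we may choose $\beta\in C_{F_1}^{H_{F_1}}$, and for this $\beta$ we have $\beta\in\lies_{F_1}\subset\lia$ together with $K^\beta=H_{F_1}$. Set $\beta':=\Ad(b)(\beta)\in\lies_{F_2}\subset\lia$. Since $\beta$ and $\beta'$ lie in $\lia$ and in the same $K$-orbit of $\liep$, Kostant's theorem (Theorem \ref{Kostant}) applied to the orbit $K\cdot\beta$ produces $n\in N_K(\lia)$ with $\Ad(n)(\beta)=\beta'$. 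Then $b\meno n\in K^\beta=H_{F_1}$, which is the $K$-stabilizer of $F_1$, so $n\cdot F_1=b\cdot F_1=F_2$, and therefore $a_2\cdot F$ lies in the $N_K(\lia)$-orbit of $a_1\cdot F$.

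The main technical point is the passage from the set-level relation $\Ad(b)(\lies_{F_1})=\lies_{F_2}\subset\lia$ to an actual element of $N_K(\lia)$ identifying $F_1$ with $F_2$. The trick that makes this painless is Proposition \ref{gruppi-esposte} (or Corollary \ref{beta-regular}): it furnishes a vector $\beta\in\lies_{F_1}$ whose full $K$-centralizer equals the face stabilizer $H_{F_1}$, so a single application of Kostant's theorem to $K\cdot\beta$ is enough to upgrade Weyl-conjugacy of $\beta,\beta'$ to $N_K(\lia)$-conjugacy of $F_1,F_2$.
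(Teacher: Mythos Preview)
Your proof is correct and follows essentially the same strategy as the paper: for existence, conjugate into $\lia$; for uniqueness, exploit that a vector $\beta\in C_F^{H_F}$ satisfies $K^\beta=H_F$, so controlling $\beta$ controls the face.

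The only notable difference is in the uniqueness step. You move $\beta$ to $\beta'=\Ad(b)\beta$ inside $\lia$ and invoke the fact (contained in Theorem~\ref{Kostant}) that $K$-conjugate elements of $\lia$ are $N_K(\lia)$-conjugate, producing $n\in N_K(\lia)$ directly. The paper instead observes that both $\lia$ and $\Ad(a^{-1})\lia$ are maximal abelian subalgebras of $\liep^\gamma$ and uses their conjugacy under $K^\gamma=H_F$ to manufacture the normalizing element. Both routes are standard consequences of the Cartan decomposition; yours is perhaps slightly more streamlined since it lands in $N_K(\lia)$ in one step, whereas the paper's route emphasizes the role of the centralizer $\liep^\gamma$. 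A minor point: your invocation of Proposition~\ref{gruppi-esposte} for $C_{F_1}^{H_{F_1}}\neq\vacuo$ tacitly assumes $F$ is proper, but the improper cases $F=\c$ and $F=\vacuo$ are trivial (the paper's proof makes the same tacit assumption).
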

\begin{proof}
  By Theorem \ref{tutte-esposte} $F=F_\ga (\c)$ and $H_F=K^\ga$ for
  some $\ga \in \liep$. Choose $a\in K$ such that $\Ad (a) \ga \in
  \lia$.  Then $a \cd F = F_{\Ad(a)\ga} (\c)$. Therefore $\Ad(a)\ga$
  belongs to $C_{a\cd F}^{H_{a\cd F}}$ and also to $\lia$. By
  Corollary \ref {beta-regular} $\lies_{a\cd F} \subset \lia$. To
  prove the second statement it is enough to show that if $F=F_\ga
  (\c)$ with $\ga \in \lia$ and $\Ad(a)\ga \in \lia$, then there is $g
  \in N_K(\lia)$ such that $g\cd F = a \cd F$.  Since $\ga \in
  \lia\cap \Ad(a\meno) \lia$, both $\lia$ and $\Ad(a\meno)\lia$ are
  maximal subalgebras in $\liep^\ga$. Hence there is $g\in K^\ga = H_F
  $ such that $\Ad(a\meno) \lia = \Ad(g) \lia$. Therefore $w: = a g
  \in N_K(\lia)$ and $a \cd F = ag \cd F = w \cd F$.
%
%
%
  % $\lies_{a \cd F}, \lies_{ b \cd F} \subset \lia$. It follows from
  % the above that $\lies_F \subset \Ad(a\meno)\lia \cap
  % \Ad(b\meno)\lia$. Hence $\Ad(a\meno)\lia$ and $\Ad(b\meno) \lia$
  % are both Cartan subalgebras of the reductive group
  % $Z_G^0(\lies_F)$. Therefore, again by Cartan maximal conjugacy,
  % there is $g \in Z_K^0(\lies_F) = H_F$ such that
  % $\Ad(g)\Ad(a\meno)\lia = \Ad(b\meno)\lia$. Hence $w:=b g a\meno
  % \in N_K(\lia)$ and
  % \begin{gather*}
  %   w \cd (a \cd F) = b \cd( g \cd F) = bF.
  % \end{gather*}
\end{proof}

Define a map
\begin{gather*}
  % \label{mappa-facce-1}
  \phi : \faces / K \ra \facesp /\Weyl
\end{gather*}
by the following rule: given a class in $ \faces/K$ choose a
representative $F$ such that $\lies_F\subset \lia$ and set $\phi(
[F]): = [F\cap \liea]$.  By Proposition \ref{proiezione-intersezione}
$F\cap \liea$ is indeed a face of the polytope and by Lemma
\ref{facce-1} a different choice of the representative will yield the
same class in $\facesp/W$, so that the map $\phi$ is well-defined.

Now fix a face $F$ with $\lies_F \subset \lia$.  $F$ is an orbitope of
$G_F^0$. Applying Proposition \ref{deco-prop} we get a decomposition
$\lieg_F = \lieg_1 \oplus \lieg_2 \oplus \lieg_3$ like \eqref{deco-3}.
Here $\lieg_3 = \liez(\lieg_F)$.  Accordingly $\lia = \lia_1 \oplus
\lia_2\oplus \lies_F$, where $\lia_i:=\lia \cap \lieg_i$ is a maximal
subalgebra of $\liep_i$ for $i=1, 2$. We have used the fact that
$\liep_3 = \liez(\lieg_F) \cap \liep = \lies_F$ by Corollary
\eqref{sf-zg}.  Denote by $ W_1$ and $W_2$ the Weyl groups of
$(\lieg_1, \lia_1)$ and $(\lieg_2, \lia_2)$. They can be considered as
subgroups of $W=\Weyl(\lieg, \lia)$. They commute and have the
following sets of invariant vectors:
\begin{gather*}
  % \label{WF-invarianti}
  \liea^{W_1} = \liea_2 \oplus \lies_F \qquad \liea^{W_2} = \liea_1
  \oplus \lies_F \qquad \liea^{W_1 \times W_2} = \lies_F.
\end{gather*}

\begin{lemma} \label{facce-inv} Let $F \subset \c$ be a nonempty face
  with $\lies_F \subset \lia$. Set $ \sigma :=F \cap \liea$.  Then
  $W_1 \times W_2$ preserves $\sigma$.
  % ; (ii) $ F = H_F \cd \sigma$.
\end{lemma}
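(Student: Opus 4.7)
The plan is to exhibit, for each $w \in W_1$, an element of $K$ that represents $w$ under the embedding $W_1 \hookrightarrow W$, preserves $F$, and preserves $\lia$. The same strategy will handle $W_2$, and since $W_1$ and $W_2$ commute in $W$ the result will follow.

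First I would recall that by Corollary \ref{GF-orbitope} the face $F$ is an orbitope of $G_F^0 = (G^{\lies_F})^0$. Applying Proposition \ref{deco-prop} to the orbit $\ext F$ under $G_F^0$ gives the splittings $\lieg_F = \lieg_1 \oplus \lieg_2 \oplus \lieg_3$ and $K_F^0 = K_1 \cdot K_2 \cdot K_3$ used in the preamble to the lemma, and in particular every element of $K_1 \cup K_2$ preserves $F$.

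Next I would observe that $\lieg_1$ and $\lieg_2$ are $B$-orthogonal ideals of $\lieg_F$ with $\lieg_1 \cap \lieg_2 = \{0\}$, so $[\lieg_1, \lieg_2] = 0$; combined with the centrality of $\lieg_3 = \liez(\lieg_F)$ this shows that $\Ad(K_1)$ fixes $\lieg_2 \oplus \lieg_3$ pointwise, and in particular fixes $\lia_2 \oplus \lies_F$ pointwise. Now given $w \in W_1$, I would pick a representative $n \in N_{K_1}(\lia_1)$. Then $n \in K_F^0$ preserves $F$, and $n$ preserves the decomposition $\lia = \lia_1 \oplus \lia_2 \oplus \lies_F$, acting as $w$ on $\lia_1$ and as the identity on $\lia_2 \oplus \lies_F$. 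Therefore $n$ preserves $\sigma = F \cap \lia$, and its induced action on $\lia$ coincides with the action of $w$ regarded as an element of $W$ through the natural chain $W_1 \hookrightarrow W(\lieg_F, \lia) \hookrightarrow W$ (the latter inclusion being legitimate because, as $\lia \subset \lieg_F$ is maximal abelian in $\liep$, the restricted roots of $\lieg_F$ form a subsystem of those of $\lieg$). The argument for $W_2$ is identical, and commutativity of the two actions follows from $[K_1, K_2] = \{e\}$.

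The only subtle point, which I expect to be the main obstacle, is to verify that the specific embedding $W_1 \times W_2 \hookrightarrow W$ used implicitly in the lemma statement (the one whose $\lia$-fixed subspace is exactly $\lies_F$, as recorded in the display preceding the lemma) is indeed the one realized by $N_{K_1}(\lia_1) \cdot N_{K_2}(\lia_2)$. This should follow from the fact that the restricted roots of $\lieg_i$ vanish on the orthogonal complement of $\lia_i$ in $\lia$, so the generating reflections of $W_i$ act trivially on that complement; hence the two realizations agree.
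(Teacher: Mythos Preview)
Your argument is correct, but it differs from the paper's approach.  The paper does not lift Weyl group elements to $K$; instead it computes $\sigma$ explicitly.  From Proposition~\ref{deco-prop} one has $\ext F = x_0 + K_1\cdot x_1$ with $x_0\in\lies_F\subset\lia$, and Kostant's theorem applied to the semisimple pair $(\lieg_1,\lia_1)$ gives
\[
\sigma=\pi(\ext F)=x_0+\conv(W_1\cdot x_1)=\conv(W_1\cdot x),
\]
which is visibly $W_1$-invariant.  Since $\sigma\subset\lies_F\oplus\lia_1=\lia^{W_2}$, the group $W_2$ fixes $\sigma$ \emph{pointwise}.  This is a two-line proof and yields the slightly sharper statement that $W_2$ acts trivially on $\sigma$.

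Your route, by contrast, works at the level of the groups $K_1,K_2$: you lift $w\in W_i$ to $n\in N_{K_i}(\lia_i)$, observe that $n$ preserves $F$ (because $K_i\subset H_F$) and normalizes $\lia$ (because $[\lieg_i,\lieg_j]=0$ for $i\neq j$), hence preserves $\sigma=F\cap\lia$.  This avoids invoking Kostant's theorem and the identity $\sigma=\pi(\ext F)$ from Proposition~\ref{proiezione-intersezione}, and it makes the compatibility of the embeddings $W_i\hookrightarrow W$ explicit; the cost is a slightly longer argument and the loss of the pointwise-fixing information for $W_2$ (though that is easily recovered once you note $\sigma\subset\lia_1\oplus\lies_F$).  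Your discussion of the ``subtle point'' about the embedding is accurate and the concern is resolved exactly as you indicate.
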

\begin{proof}
  Recall from Proposition \ref{deco-prop} that $\ext F = x_0 + K_1 \cd
  x_1$. By Kostant theorem $\sigma =\pi(\ext F) = x_0 + \conv (W_1\cd
  x_1) = \conv (W_1\cd x)$. Hence $W_1$ preserves $\sigma$. Moreover
  $\sigma \subset \lies_F \oplus \liea_1 $ hence $W_2$ fixes $\sigma$
  pointwise and the statement follows.
  % Similarly, since $\sigma \subset \liea_0 \oplus \liek_F$, $
  % K'_F$ fixes $\sigma$ pointwise.  Therefore $H_F \cd \sigma = K_F
  % \cd \sigma$.
%
  % By Lemma \ref{gichev} $K_F \cd (\sigma -x_0)$ is convex and the
  % same is true of $x_ 0 + K_F \cd (\sigma -x_0) = K_F \cd\sigma$.
  %     % The group $Z_F\cd K'_F$ fixes $x$ and it commutes with the
  %     % action of $W_F$. Therefore it fixes also the orbit orbit
  %     % $W_F\cd
  %     % x$, and hence also $\sigma$.  Since $\sigma \subset
  %     % \liez_F
  %     % \oplus \liek_F$, $Z_F\cd K'_F$ fixes $\sigma$ and also
  %     % $K_F \cd
  %     % \sigma$ pointwise.  Therefore $\KF \cd \sigma = K_F \cd
  %     % \sigma$.
  % So $\KF \cd \sigma=K_F\cd \sigma$ is convex. Since $\ext F = \KF
  % \cd x \subset \KF \cd \sigma$, it follows that $F \subset \KF
  % \cd \sigma$. On the other hand $\sigma \subset F$ and $F $ is
  % $\KF$-invariant, so also $\KF \cd\sigma \subset F$. This
  % establishes (ii).
\end{proof}

% \begin{lemma} \label{facce-inv} Let $T \subset K$ be a maximal torus
%   and let $F$ be a nonempty $\liea$-invariant face of $\OO$.  Set $
%   \sigma :=F \cap \liea$.  Then (i) $W_F \times W_F'$ preserves
%   $\sigma$; (ii) $ F = \KF \cd \sigma = K_F \cd \sigma$.
% \end{lemma}
% \begin{proof}
%   Recall that $\ext F = x_0 + K_F \cd x_1$. By Kostant theorem
%   $\sigma =\pi(\ext F) = \pi(x_0 + K_F\cd x_1) = x_0 + \conv
%   (W_F\cd x_1) = \conv (W_F\cd x)$. Hence $W_F$ preserves
%   $\sigma$. Moreover $\sigma \subset \lieh_0 \oplus
%   \liea_{\mathrm{eff}} $ hence $W'_F$ fixes $\sigma$ pointwise and
%   (i) follows.  Similarly, since $\sigma \subset \lieh_0 \oplus
%   \liek_F$, $Z_F\cd K'_F$ fixes $\sigma$ pointwise.  Therefore
%   $\KF \cd \sigma = K_F \cd \sigma$.  By Lemma \ref{gichev} $K_F
%   \cd (\sigma -x_0)$ is convex and the same is true of $x_ 0 + K_F
%   \cd (\sigma -x_0) = K_F \cd\sigma$.
%     %   The group $Z_F\cd K'_F$ fixes $x$ and it commutes with the
%     %   action of $W_F$. Therefore it fixes also the orbit orbit
%     %   $W_F\cd
%     %   x$, and hence also $\sigma$.  Since $\sigma \subset
%     %   \liez_F
%     %   \oplus \liek_F$, $Z_F\cd K'_F$ fixes $\sigma$ and also
%     %   $K_F \cd
%     %   \sigma$ pointwise.  Therefore $\KF \cd \sigma = K_F \cd
%     %   \sigma$.
%   So $\KF \cd \sigma=K_F\cd \sigma$ is convex. Since $\ext F = \KF
%   \cd x \subset \KF \cd \sigma$, it follows that $F \subset \KF
%   \cd \sigma$. On the other hand $\sigma \subset F$ and $F $ is
%   $\KF$-invariant, so also $\KF \cd\sigma \subset F$. This
%   establishes (ii).
% \end{proof}
%
%

If $\sigma$ is a face of $P$ set $ G_\sigma : = \{g\ \in \Weyl: g
(\sigma ) = \sigma\}$.
\begin{lemma}
  If $\sigma \in \facesp$ there is a vector $\beta\in \liea$ that is
  fixed by $G_\sigma$ and such that $\sigma = F_\beta(P)$.  If $\beta$
  is any such vector and $F:=F_\beta (\c)$, then $F\cap \liea
  =\sigma$, $G_\sigma = W_1\times W_2$, $\lies_F = \liea^{G_\sigma}$
  and $F$ depends only on $\sigma$, not on the choice of $\beta$.
\end{lemma}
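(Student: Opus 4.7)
The plan is to split the proof into three parts: first, existence of $\beta$; second, verification of the identities $F \cap \lia = \sigma$, $G_\sigma = W_1 \times W_2$, and $\lies_F = \lia^{G_\sigma}$ for any such $\beta$ with $F := F_\beta(\c)$; third, uniqueness of $F$ in terms of $\sigma$.

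Existence is immediate from Lemma \ref{u-cono}: since $P$ is a polytope every face is exposed, and the finite group $G_\sigma \subset W \subset O(\lia)$ preserves both $P$ and $\sigma$, so the cone $C_\sigma \subset \lia$ contains a $G_\sigma$-fixed vector $\beta$. Given any such $\beta$, the identity $F \cap \lia = \sigma$ follows from $\c \cap \lia = \pi(\c) = P$ (Lemma \ref{gichev}(i)) together with $h_\c(\beta) = h_P(\beta)$, which holds because $\beta \in \lia$ forces $\sx x, \beta\xs = \sx \pi(x), \beta\xs$ for $x \in \OO$.

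The core step is the inclusion $G_\sigma \subset W_1 \times W_2$ (the reverse inclusion is Lemma \ref{facce-inv}, once $\lies_F \subset \lia$ is checked). Any $w \in G_\sigma$ fixes $\beta$ by hypothesis, so $G_\sigma$ is contained in the stabilizer $W_\beta$. Since $\lia \subset \lieg^\beta$ is maximal abelian in $\liep^\beta$ (maximality passes to centralizers), $W_\beta$ identifies with $W(\lieg^\beta, \lia)$, and it is enough to show $\lieg^\beta \subset \lieg_F$. The argument establishing $G^{\beta+} \subset Q_F$ in the proof of Proposition \ref{QF-para} uses only that $F = F_\beta(\c)$: by Corollary \ref{hesso}, $\ext F = K^\beta \cd x$ and at a maximum point $V_+ = 0$, so Proposition \ref{tangent} gives $d\alfa_e(\lieg^{\beta+}) \subset V_0 = T_x \ext F$; by $K^\beta$-invariance of $\lieg^{\beta+}$ each $\xi_\OO$ is tangent to $\ext F$ everywhere, whence $(G^{\beta+})^0 \subset Q_F$; combined with $K^\beta \subset Q_F$ (which meets every component) this yields $G^{\beta+} \subset Q_F$. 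Applying $\theta$ and intersecting yields $G^\beta \subset Q_F \cap \theta Q_F = G_F$. As a byproduct $\lia \subset \lieg_F$, and since $\lia$ is maximal abelian in $\liep$ it is also maximal abelian in $\liep \cap \lieg_F$, so $\lies_F = \liez(\lieg_F) \cap \liep \subset \lia$, legitimizing the earlier use of Lemma \ref{facce-inv}. Combining both inclusions gives $G_\sigma = W_1 \times W_2$, and then $\lies_F = \lia^{W_1 \times W_2} = \lia^{G_\sigma}$ by the formula displayed just before Lemma \ref{facce-inv}.

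Uniqueness of $F$ is now immediate: if $\beta'$ is another vector satisfying the hypothesis, then $F' := F_{\beta'}(\c)$ is an orbitope of $(G^{\R\beta'})^0$ by Corollary \ref{hesso}, with $\R\beta' \subset \lia$; similarly $F$ is an orbitope of $(G^{\R\beta})^0$ with $\R\beta \subset \lia$. Since $F \cap \lia = \sigma = F' \cap \lia$, Corollary \ref{interseco-monotono} forces $F = F'$. The main obstacle in the whole argument is the inclusion $\lieg^\beta \subset \lieg_F$, which requires verifying that the tangent-space computation borrowed from the proof of Proposition \ref{QF-para} remains valid without a priori knowing $\beta \in \CFI$.
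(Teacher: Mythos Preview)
Your proof is correct and follows the paper's line of argument closely, but with two differences worth noting.

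First, you make explicit a point the paper's proof leaves implicit: to invoke Lemma~\ref{facce-inv} and the decomposition $\lia = \lia_1 \oplus \lia_2 \oplus \lies_F$ one needs $\lies_F \subset \lia$, and you supply this by proving $\lieg^\beta \subset \lieg_F$ (whence $\lia \subset \lieg_F$ and $\lies_F = \liez(\lieg_F)\cap\liep \subset \lia$). Your derivation of $G^{\beta+} \subset Q_F$ by reproducing the tangent-space argument from Proposition~\ref{QF-para} is sound, though it can be shortened: by Proposition~\ref{para2} the closed $G^{\beta+}$-orbit is $\ext F_\beta(\c) = \ext F$, so $G^{\beta+}$ preserves $\ext F$ and hence lies in $Q_F$. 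The paper, by contrast, proceeds by first using $W_1\times W_2 \subset G_\sigma$ to deduce $\beta \in \lia^{G_\sigma} \subset \lia^{W_1\times W_2} = \lies_F$, obtaining $\lieg^\beta = \lieg_F$ (equality rather than inclusion) and then $G_\sigma \subset W_\beta = W_1\times W_2$; the logic is the same once the prerequisite $\lies_F\subset\lia$ is in place.

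Second, for the independence of $F$ from $\beta$ the paper argues that $\lies_F = \lia^{G_\sigma}$ and $H_F = K^{\lies_F}$ depend only on $\sigma$, so $\ext F$ (the $H_F$-orbit through any vertex of $\sigma$) is determined by $\sigma$. Your route via Corollary~\ref{interseco-monotono} is equally valid and perhaps more direct.
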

\begin{proof}
  The existence of a $G_\sigma$-invariant $\beta$ such that
  $F_\beta(P)= \sigma$ follows directly from Lemma \ref{u-cono}.  If
  $F :=F_\beta(\c)$ it follows immediately that $F\cap \lia = \sigma$.
  By Lemma \ref{facce-inv} $W_1 \times W_2 \subset G_\sigma$, so
  $\beta\in \lia^{G_\sigma}\subset \lia^{W_1 \times W_2} =
  \lies_F$. It follows that $H_F = K^\beta$.  The subgroup of $W$ that
  fixes $\beta$ is the Weyl group of $(\lieg^\beta, \lia)$
  i.e. $W_1\times W_2$. Hence $W_1\times W_2=G_\sigma$ and $\lies_F =
  \liea^{G_\sigma}$.  So $\lies_F$ depends only on $\sigma$, not on
  the choice of $\beta$.  The same holds for $H_F = K^{\lies_F}$ and
  for $\ext F $, which is equal to the $H_F$-orbit through a point in
  $\ext \sigma$.
\end{proof}

Define a map $ \psi : \facesp /\Weyl \ra \faces / K$ by the following
rule: given $\sigma$, fix $\beta\in \liea^{G_\sigma}$ such that
$\sigma = F_\beta(P)$ and set
$
  \psi( [\sigma]): = [F_\beta (\c)]$.
By the previous lemma $F_\beta(\c)$ depends only on $\sigma$, not on
$\beta$. It is clear that $\psi$ is well-defined on equivalence
classes.

\begin{yuppi}
  The maps $\psi$ and $\phi$ are inverse to each other. Therefore $
  \facesp /\Weyl$ and $ \faces / K$ are in bijective correspondence.
\end{yuppi}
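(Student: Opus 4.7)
The plan is to verify the two composition identities $\phi\circ\psi=\id_{\facesp/W}$ and $\psi\circ\phi=\id_{\faces/K}$ separately, using the previous lemma together with the uniqueness statement of Corollary \ref{interseco-monotono}.

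For $\phi\circ\psi=\id$ the argument should be essentially a direct unpacking: given $[\sigma]\in\facesp/W$, pick $\beta\in\lia^{G_\sigma}$ with $\sigma=F_\beta(P)$, as provided by Lemma \ref{u-cono}, and set $F:=F_\beta(\c)$, so that $\psi([\sigma])=[F]$. The previous lemma yields $F\cap\lia=\sigma$ and $\lies_F=\lia^{G_\sigma}\subset\lia$, which exhibits $F$ as a legitimate representative of its $K$-class for the purposes of $\phi$. Consequently $\phi(\psi([\sigma]))=[F\cap\lia]=[\sigma]$.

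The direction $\psi\circ\phi=\id$ is the slightly more delicate one. Start with a proper face $F\subset\c$ chosen so that $\lies_F\subset\lia$, and put $\sigma:=F\cap\lia$, which is a face of $P$ by Proposition \ref{proiezione-intersezione}. Choose any $\beta\in\CFI$; by Corollary \ref{beta-regular} together with $\lies_F\subset\lia$, this $\beta$ lies in $\lia$, and $F=F_\beta(\c)$. Lemma \ref{gichev} gives $\c\cap\lia=\pi(\c)=P$, while $h_\c(\beta)=h_P(\beta)$ since $\beta\in\lia$; therefore
\[
\sigma=F\cap\lia=F_\beta(\c)\cap\lia=F_\beta(P).
\]
Now select, via the previous lemma, a vector $\beta'\in\lia^{G_\sigma}$ with $\sigma=F_{\beta'}(P)$, and set $F':=F_{\beta'}(\c)$, so that $\psi([\sigma])=[F']$. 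The previous lemma furnishes $F'\cap\lia=\sigma$ and $\lies_{F'}=\lia^{G_\sigma}\subset\lia$.

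At this stage both $F$ and $F'$ are proper faces of $\c$ with $\lies_F,\lies_{F'}\subset\lia$, and Corollaries \ref{GF-orbitope} and \ref{beta-regular} realise them as $(G^{\lies_F})^0$ and $(G^{\lies_{F'}})^0$-orbitopes respectively. Since $F\cap\lia=F'\cap\lia=\sigma$, Corollary \ref{interseco-monotono} applies and yields $F=F'$, hence $\psi(\phi([F]))=[F]$. The main subtle point, and the one to double-check with care, is that the representative $F'$ produced by $\psi$ genuinely satisfies $\lies_{F'}\subset\lia$ so that Corollary \ref{interseco-monotono} is applicable; but this is precisely the content of the equality $\lies_{F'}=\lia^{G_\sigma}$ in the previous lemma, so the argument closes without further work.
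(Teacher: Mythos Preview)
Your argument is correct and follows essentially the same route as the paper: both directions rely on the previous lemma for $\phi\circ\psi=\id$ and on Corollary \ref{interseco-monotono} for the remaining direction. The only cosmetic difference is that the paper phrases the second half as injectivity of $\phi$ (taking two faces $F_1,F_2$ with $\lies_{F_i}\subset\lia$ and $W$-equivalent intersections, then aligning them), whereas you verify $\psi\circ\phi=\id$ directly; the paragraph establishing $\sigma=F_\beta(P)$ for $\beta\in\CFI$ is not actually needed, since $\psi$ is applied via a fresh $\beta'\in\lia^{G_\sigma}$ anyway.
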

\begin{proof}
  Let $\sigma$ be a face of $\polp$. Choose $\beta\in \lia^{G_\sigma}$
  such that $\sigma = F_\beta(P)$.  If $F:=F_\beta(\c)$, then $\lies_F
  \subset \liea$. So $\phi \circ \psi ([\sigma] ) = \phi ([F]) =
  [F\cap \liea ] = [\sigma]$ and $\phi \circ \psi$ is the identity.
  Thus $\phi$ is surjective. It is enough to show that $\phi$ is
  injective. Let $F_1, F_2 \subset \c$ be faces such that $\phi([F_1])
  = \phi([F_2])$.  Acting with $K$ we can assume that both
  $\lies_{F_1} $ and $\lies_{F_2}$ are contained in $\lia$. Acting
  with $W$ we can also assume that $F_1\cap \lia = F_2 \cap \lia$. By
  Corollary \ref{interseco-monotono} we get $F_1=F_2$. By Proposition
  \ref{proiezione-intersezione} the map between $ \facesp /\Weyl$ and
  $ \faces / K$ is the one stated in the introduction.
\end{proof}

\begin{remark}\label{polare}
  Let % Theorem \ref{main} extends immediately to a general polar
  $K_1 \ra \operatorname{O}(V)$ be a polar representation.  By Dadok's
  theorem there is a semisimple Lie group $G$ with Cartan
  decomposition $\lieg=\liek\oplus \liep$ such that $V=\liep$ and the
  orbits of $K_1$ coincide with the orbit of $\Ad K$. A maximal
  subalgebra $\lia \subset \liep$ is a section for both
  actions. Denote by $W$ the Weyl group of $(\lieg, \lia)$ and by
  $W_1$ the Weyl group of the polar representation of $K_1$.  If $x\in
  \lia$, then $W\cd x = K\cd x \cap \lia = K_1 \cd x \cap \lia = W_1
  \cd x$.  We claim that $\faces/K_1 = \faces/K$ and $\facesp/W_1 =
  \facesp/W$.  Indeed let $F \in \faces$ and $k\in K$. Fix a point
  $x\in \relint F$.  There is some $k_1 \in K_1$ such that $k_1 x =
  kx$. Then $kx$ belongs both to $\relint kF$ and to $\relint
  k_1F$. Hence $kF =k_1F$ by Theorem \ref{schneider-facce}. This shows
  that the $K$-orbit through $F$ is contained in the $K_1$-orbit
  through $F$. Interchanging $K$ and $K_1$ we get the opposite
  inclusion. Thus $\faces/K_1 = \faces/K$. In the same way one proves
  that $\facesp/W_1 = \facesp/W$. From this it follows that Theorem
  \ref{main} holds for any polar representation.
\end{remark}

\section{Final remarks}

It follows from the results in the previous section that there are a
finite number of $K$-orbits on the set $\faces$.  Given such an orbit,
we denote by $S$ the union of the faces in the orbit. Therefore $S$
equals $K\cd F$ for some face $F \in \faces$.  We call $S$ the
\emph{stratum} corresponding to the face $F$.  Arguing as in the case
of coadjoint orbitopes \cite[\S 5]{biliotti-ghigi-heinzner-1-preprint}
one proves the following.

\begin{teo}
  \label{stratification}
  The strata give a partition of $\partial \c$.  They are smooth
  embedded submanifolds of $\liep$ and are locally closed in $\partial
  \c$.  For any stratum $S$ the boundary $\overline{S} \setminus S$ is
  the disjoint union of strata of lower dimension.
\end{teo}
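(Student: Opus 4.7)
The plan is to follow the argument for coadjoint orbitopes in \cite[\S 5]{biliotti-ghigi-heinzner-1-preprint}, adapted to the polar setting using the face structure already developed above. First I would fix the convention that the stratum $S$ attached to a face $F$ is the $K$-saturation of its relative interior, $S = \bigcup_{k \in K} k \cd \relint F$, rather than $K \cd F$ (the latter is then the closure of $S$). With this convention the partition claim is immediate from Theorem \ref{schneider-facce}: every point of $\partial \c$ lies in the relative interior of a unique face, and that face determines a unique $K$-orbit in $\faces$, hence a unique stratum. Disjointness of distinct strata is equally direct, since a coincidence $k_1 \cd \relint F_1 \cap k_2 \cd \relint F_2 \neq \vacuo$ would force $k_2\meno k_1 \cd F_1 = F_2$ by the same theorem.

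Next I would show that each stratum $S$ is a smooth embedded submanifold of $\liep$ by presenting it as the image of a homogeneous fiber bundle. By Corollary \ref{beta-regular} the $K$-stabilizer $\HF$ of $F$ equals $K^{\lies_F}$, a closed subgroup of $K$ that preserves $\relint F$ setwise. The natural map
\begin{gather*}
K \times^{\HF} \relint F \lra S, \qquad [k, x] \longmapsto k \cd x
\end{gather*}
is $K$-equivariant and injective by the partition argument applied face by face. To see that it is an immersion I would combine Corollary \ref{GF-orbitope} with Proposition \ref{deco-prop} applied to $\ext F$: the affine hull of $F$ is of the form $x_0 + \liep_1$, and at each $x \in \relint F$ the tangent space $\liek \cd x$ is transverse to $\liep_1$, because the two sit in complementary ideals of $\lieg_F$. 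This endows $S$ with the structure of a smooth embedded submanifold of $\liep$ of dimension $\dim K/\HF + \dim F$.

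For local closedness and the description of $\overline{S}\setminus S$ I would argue by downward induction on $\dim F$. The closure of $S$ is $K \cd F$, which is compact, and $F \setminus \relint F$ is the disjoint union of the relative interiors of the proper subfaces of $F$. By Lemma \ref{ext-facce} every face of $F$ is also a face of $\c$, and each proper subface has strictly smaller dimension than $F$ by the dimension lemma stated just before Lemma \ref{face-chain}. Hence $\overline{S} \setminus S = K \cd (F \setminus \relint F)$ is a disjoint union of strata of strictly lower dimension, and in particular $S$ is open in $\overline{S}$, so locally closed in $\partial \c$.

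The main obstacle is the submanifold step: one has to rule out ``collisions'' $k \cd \relint F \cap \relint F \neq \vacuo$ with $k \notin \HF$, and to verify injectivity of the differential of the orbit map at each point of $\relint F$. The first point reduces, via the identification of $\HF$ in Corollary \ref{beta-regular}, to the unique-interior statement in Theorem \ref{schneider-facce}; the second rests on the explicit description $F = K^{\sigma^\perp} \cd \sigma$ from Proposition \ref{proiezione-intersezione} together with the semisimple--center decomposition of $\lieg_F$ supplied by Proposition \ref{deco-prop}.
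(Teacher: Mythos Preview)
Your outline matches the paper's approach closely---the paper itself simply refers back to \cite[\S 5]{biliotti-ghigi-heinzner-1-preprint} for the bundle description of the strata---and your convention $S = K \cd \relint F$ (rather than $K\cd F$) is indeed the one that makes the partition and boundary statements coherent.

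There is, however, one genuine gap. From $\dim F' < \dim F$ for a proper subface $F' \subsetneq F$ you conclude that the stratum $S' = K \cd \relint F'$ has strictly smaller dimension than $S$. But $\dim S' = \dim K/H_{F'} + \dim F'$ and $\dim S = \dim K/H_F + \dim F$, and there is no inclusion $H_F \subset H_{F'}$ in general: an element of $K$ preserving $F$ need not preserve a given subface. So the drop in face dimension could in principle be offset by growth of the base $K/H_{F'}$. The paper singles out exactly this point (``the computation of the dimension of the strata is trickier in this case'') and bypasses it with a Hausdorff-dimension argument: $\overline{S}\setminus S$ fibres over $K/H_F$ with fibre the relative boundary of $F$, which has Hausdorff dimension $\dim F - 1$; hence $\overline{S}\setminus S$ has Hausdorff dimension $\dim S - 1$, and any smooth submanifold $S'$ contained in it satisfies $\dim S' \leq \dim S - 1$. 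This closes the gap without comparing the stabilisers.

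A minor point: Lemma \ref{ext-facce} concerns extreme points, not the statement ``a face of a face is a face''; the latter is still elementary but needs a different citation or a one-line argument from the definition.
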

The computation of the dimension of the strata is trickier in this
case. Nevertheless the bound in the statement follows easily from the
following argument. If $E$ is an $n$-dimensional convex body, then
$\partial E$ has Hausdorff dimension $n-1$. If $F$ is an
$n$-dimensional face, the boundary of the stratum $S:=K\cd F$ is a
fiber bundle over a compact base with fibres isometric to $\partial
F$. Therefore its Hausdorff dimension is strictly smaller than the
dimension of $S$.

Also the description of the faces of $\c$ and of the momentum polytope
in terms of root data is just as in the case of coadjoint orbitopes
(see \S 6 in \cite{biliotti-ghigi-heinzner-1-preprint}). We briefly
state the result.

Fix a maximal subalgebra $\liea$ of $\liep$ and a system of simple
roots $\Pi \subset \roots= \roots (\lieg, \liea)$.  A subset $E\subset
\liea$ is \emph{connected} if there is no pair of disjoint subsets
$D,C\subset E$ such that $D\sqcup C =E$, and $\sx x,y \xs=0$ for any
$x \in D$ and for any $y \in C$.  (A thorough discussion of connected
subsets can be found in \cite{satake-compactifications}, \cite[\S
5]{moore-compactifications}.)  Connected components are defined as
usual.  If $x $ is a nonzero vector of $\liea$, a subset $I
\subset\simple$ is called $x$-\enf{connected} if $I\cup\{ x\}$ is
connected.  Equivalently $I \subset \simple$ is $x$-connected if and
only if every connected component of $I$ contains at least one root
$\alfa$ such that $\alfa (x) \neq 0$.  If $I\subset \simple$ is
$x$-connected, denote by $I'$ the collection of all simple roots
orthogonal to $\{ x\}\cup I$.  The set $J:=I\cup I'$ is called the
$x$-\enf{saturation} of $I$.  The largest $x$-connected subset
contained in $J$ is $I$. So $J$ is determined by $I$ and $I$ is
determined by $J$.  Given a subset $I\subset \simple$ we will denote
by $Q_I$ the parabolic subgroup with Lie algebra $\lieq_I$ as defined
in \eqref{para-dec}.
\begin{teo}
  \label{satakone}
  Let $\OO\subset \liep$ be a $K$-orbit and let $x $ be the unique
  point in $\OO \cap \cchamber$.
  \begin{enumerate}
  \item If $I \subset \simple$ is $x$-connected and $J$ is its
    $x$-saturation, then $Q_I\cd x = Q_J\cd x $ and $ F:= \conv ( Q_J
    \cd x) $ is a face of $\c$. If $\beta \in \lia_J$ and $\la(\beta)
    >0 $ for any $\la \in \simple \setminus J$, then $F=F_\beta
    (\c)$. Moreover $ Q_F = Q_J $.
%       \item Given an arbitrary subset $E \subset \simple$, denote by $I$
%     the largest $x$-connected subset contained in $E$ and by $J$ the
%     $x$-saturation of $I$.  Then $H_E\cd x = H_I \cd x = H_J \cd x$.
  \item Any face of $\c$ is conjugate to one of the faces constructed
    in (a).
% More precisely, given a face $F$ and a maximal torus $T
%     \subset H_F$ there are a base $\simple \subset \root (\liek^\C,
%     \liet^\C)$ and a subset $I\subset \simple$ with the following
%     properties: (i) if $\chamber$ is the positive Weyl chamber
%     corresponding to $\simple$, then $\cchamber\cap \ext F \neq
%     \vacuo$; (ii) if $x$ is the unique point in $\cchamber \cap \ext
%     F$, then $I$ is $x$-connected and $F= \conv (H_J\cd x)$, where $J$
%     is the $x$-saturation of $I$.
  \end{enumerate}
\end{teo}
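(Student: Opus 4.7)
The plan is to deduce Theorem \ref{satakone} from the parabolic--face correspondence of \S\ref{parabolic-section}, using the combinatorics of simple roots.

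For part (a), fix an $x$-connected $I\subset \simple$ with $x$-saturation $J = I \cup I'$, and pick $\beta \in \lia_J$ satisfying $\la(\beta)>0$ for all $\la\in\simple\setminus J$. By Lemma \ref{para-beta}, the simple roots vanishing on $\beta$ are exactly those in $J$, so $\lieg^{\beta+} = \lieq_J$ and hence $Q_J = G^{\beta+}$. Both $x$ and $\beta$ lie in $\cchamber$, so Proposition \ref{hesso-2} shows that $x$ is a local maximum of $\mup^\beta$; since by (the proof of) Proposition \ref{massimo-connesso} every local maximum is global, $x \in \ml(\beta)$. Then Corollary \ref{hesso} and Proposition \ref{para2} give $\ext F_\beta(\c) = \ml(\beta) = K^\beta \cd x = Q_J \cd x$, so $F_\beta(\c) = \conv(Q_J \cd x)$. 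The identity $Q_F = Q_J$ follows from Proposition \ref{QF-para} since $\beta$ is $H_F = K^\beta$-fixed and therefore lies in $\CFI$.

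To finish part (a) I would prove $Q_I \cd x = Q_J \cd x$. The inclusion $\subseteq$ is immediate from $I \subset J$, which gives $Q_I \subset Q_J$. For the reverse, the orthogonality $I \perp I'$ in $\simple$ implies that no root in $\spam(I\cup I')$ mixes the two, so $\roots_J = \roots_I \sqcup \roots_{I'}$, and every $\la \in \roots_{I'}$ satisfies $\la(x)=0$. Hence elements of $\lieg_\la$ commute with $x$, and the subgroup they generate acts trivially on $x$. Using the Levi decomposition $Q_J = M_J \cd N_J$ together with the almost-direct product $M_J = M_I \cd G_{I'}$, where $G_{I'}$ is the reductive subgroup with root system $\roots_{I'}$ (which centralizes $x$ pointwise), the orbit $Q_J \cd x$ collapses to $Q_I \cd x$.

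For part (b), let $F$ be an arbitrary proper face of $\c$. By Theorem \ref{tutte-esposte}, $F = F_\ga(\c)$ for some $\ga \in \liep$; after replacing $F$ by a $K$-conjugate we may assume $\ga \in \cchamber$. Set $J_0 := \{\la \in \simple : \la(\ga) = 0\}$, let $I$ be the largest $x$-connected subset of $J_0$, and let $J$ be the $x$-saturation of $I$. A short combinatorial check shows $J_0 \subset J$ and $J \setminus J_0 \subset I'$, so every simple root in $J\setminus J_0$ is orthogonal to $x$. By the same root-space argument as in part (a), the added root spaces fix $x$, and therefore $Q_{J_0}\cd x = Q_J \cd x$. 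Since part (a) already identifies $F_\ga(\c)$ with $\conv(Q_{J_0}\cd x)$, we conclude $F = \conv(Q_J\cd x)$, which is exactly one of the faces constructed in (a).

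The main obstacle is the orbit-equality $Q_I \cd x = Q_J \cd x$: it requires verifying that every element of $Q_J$ can be factored as a product of an element of $Q_I$ and an element fixing $x$, which in turn rests on the structural fact that $M_J$ is generated by the commuting subgroups $M_I$ and $G_{I'}$, with $G_{I'}$ killing $x$ because $\la(x)=0$ for $\la \in \roots_{I'}$. The combinatorial step in part (b) --- checking that the enlargement from $J_0$ to the $x$-saturation $J$ only introduces roots orthogonal to $x$ --- is then routine from the definitions of $x$-connectedness and $x$-saturation.
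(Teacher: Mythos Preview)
The paper does not actually prove Theorem \ref{satakone}; it states the result and refers to \S6 of \cite{biliotti-ghigi-heinzner-1-preprint} for the analogous argument in the coadjoint case. Your proof is correct and follows the expected route through the parabolic--face dictionary of \S\ref{parabolic-section}: for part (a) you identify $Q_J=G^{\beta+}$, place $x\in\ml(\beta)$ via Proposition \ref{hesso-2}, and read off $\ext F_\beta(\c)=Q_J\cd x$ from Proposition \ref{para2}; the orbit equality $Q_I\cd x=Q_J\cd x$ follows because $\roots_J=\roots_I\sqcup\roots_{I'}$ (the Dynkin diagram of $J$ is disconnected) and every $\la\in\roots_{I'}$ kills $x$, so the additional root spaces $\lieg_\la$ with $\la\in\roots_{I'}^-$ centralize $x$.

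Two small points worth tightening. First, in part (b) you invoke ``part (a)'' to get $F_\ga(\c)=\conv(Q_{J_0}\cd x)$, but $J_0$ need not itself be an $x$-saturation; what you are really using is the intermediate fact, established in your first paragraph without any hypothesis on $J$, that for \emph{any} subset $J_0\subset\simple$ and any $\ga\in\lia_{J_0}\cap\cchamber$ with $\la(\ga)>0$ for $\la\notin J_0$ one has $F_\ga(\c)=\conv(Q_{J_0}\cd x)$. Second, the combinatorial step deserves one line of justification: the largest $x$-connected subset $I\subset J_0$ is the union of those connected components of $J_0$ containing a root non-orthogonal to $x$, so the remaining components $J_0\setminus I$ are orthogonal both to $I$ (being separate components of $J_0$) and to $x$, hence lie in $I'$; this gives $J_0\subset J$ and $J\setminus J_0\subset I'$ as you claim.
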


\def\cprime{$'$}

\end{document}